\definecolor{orcidlogocol}{HTML}{A6CE39}
\DeclareMathOperator{\dist}{dist}
\DeclareMathOperator{\diam}{diam}
\DeclareMathOperator{\ran}{Ran}
	 \newcommand{\obs}{\mathrm{obs}}
	 \newcommand{\euler}{\mathrm{e}}
     \newcommand{\CC}{\mathbb{C}}
     \newcommand{\EE}{\mathbb{E}}
     \newcommand{\NN}{\mathbb{N}}
     \newcommand{\PP}{\mathbb{P}}
     \newcommand{\RR}{\mathbb{R}}
     \newcommand{\ZZ}{\mathbb{Z}}
     \newcommand{\vol}{\operatorname{Vol}}     
     \newcommand{\Csob}{C_{\mathrm{sob}}}
     \newcommand{\drm}{\mathrm{d}}
	 \newcommand{\tildel}{\tilde \partial}
\newtheorem{theorem}{Theorem}
\newtheorem{lemma}[theorem]{Lemma}
\newtheorem{definition}[theorem]{Definition}
\newtheorem{proposition}[theorem]{Proposition}
\newtheorem{remark}[theorem]{Remark}
\newtheorem{corollary}[theorem]{Corollary}
\title[Magnetic Bernstein inequalities and spectral inequality]{Magnetic Bernstein inequalities and spectral inequality on thick sets for the Landau operator} 
\subjclass[2020]{Primary: 35Pxx, 35A23. Secondary: 93B05, 82B44}
\keywords{Landau Hamiltonian, Spectral inequality, Quantitative Unique Continuation, thick sets, Bernstein Inequalities, Null-controllability, Anderson localization}
\author[]{Paul Pfeiffer}
\address{Paul Pfeiffer, Ludwigs-Maximilians-Universit\"at München, Fakultät für Mathematik, Informatik und Statistik, 80333 Munich, Germany,
\href{https://orcid.org/0000-0001-8538-5314}{orcid.org/0000-0001-8538-5314}}
\email{pfeiffer@math.lmu.de}
\author[]{Matthias T\"aufer}
\address{Matthias T\"aufer, Universit\'e Polytechnique Hauts-de-France, C\'ERAMATHS/DMATHS,
59313 Valenciennes, France,
\href{https://orcid.org/0000-0001-8473-2310}{orcid.org/0000-0001-8473-2310}}
\email{matthias.taufer@uphf.fr}
\thanks{Much of this work was done while both authors were employed by FernUniversität in Hagen, Germany. The authors thank the anonymous referees for numerous helpful comments.
}
\begin{document}
\begin{abstract}
	We prove a \emph{spectral inequality} for the Landau operator.
	This means that for all $f$ in the spectral subspace corresponding to energies up to $E$, the $L^2$-integral over suitable $S \subset \RR^2$ can be lower bounded by an explicit constant times the $L^2$-norm of $f$ itself.
	We identify the class of all measurable sets $S \subset \RR^2$ for which such an inequality can hold, namely so-called \emph{thick} or \emph{relatively dense} sets, and deduce an asymptotically optimal expression for the constant in terms of the energy, the magnetic field strength and in terms of parameters determining the thick set $S$.
	Our proofs rely on so-called magnetic Bernstein inequalities.
	
	As a consequence, we obtain the first proof of null-controllability for the magnetic heat equation (with sharp bound on the control cost), and can relax assumptions in existing proofs of Anderson localization in the continuum alloy-type model.
\end{abstract}

\maketitle

\section{Introduction}

The \emph{Landau operator}
\[
	H_B
	:=
	\left( 
	i \nabla 
	+
	\frac{B}{2}
	\begin{pmatrix}
		- x_2
		\\
		x_1
	\end{pmatrix}
	\right)^2
\] 
 occasionally also called \emph{twisted Laplacian}, describes the motion of a particle in two dimensions, subject to a constant magnetic field of strength $B > 0$. Here, $x_1$ and $ x_2$ denote the operators of multiplication with the first and second coordinate, respectively.
 It is a self-adjoint operator the spectrum of which consists of infinitely degenerate eigenvalues at particular energies, namely $B, 3 B, 5 B, \dots$. 
 These energies are also referred to as the \emph{Landau levels}.
The Landau operator is relevant for a host of phenomena in Physics, including explanations for Landau diamagnetism~\cite{Landau-1930}, Hofstadter's butterfly~\cite{Hofstadter-76}, and von Klitzing's description of the quantized Hall effect~\cite{vonKlitzing-86}.

In this article, we prove optimal \emph{spectral inequalities}, that are lower bounds on the mass of functions, sampled on a subdomain $S \subset \RR^2$, uniform for all function in the spectral subspace below a given energy $E$
\begin{equation}
	\label{eq:spectral_inequality_intro}
	\lVert f \rVert_{L^2(\RR^2)}^2
	\leq
	C(E,B,S)
	\lVert f \rVert_{L^2(S)}^2
	\quad
	\text{for all}
	\quad
	f \in \ran \mathbf{1}_{(- \infty, E]}(H_B)
\end{equation}
where $\mathbf{1}_{(- \infty, E]}(H_B)$ is the spectral projector onto energies up to $E$ with respect to the self-adjoint operator $H_B$ and $\ran$ denotes the range of an operator.
Clearly, not for every $S \subset \RR^2$ such an inequality can hold.
We identify the necessary and sufficient criterion on $S \subset \RR^2$ for~\eqref{eq:spectral_inequality_intro} to hold, namely \emph{thickness} or \emph{relative density}.
Furthermore, we provide an explicit expression for the constant $C(E,B,S)$, and show that, in some sense, it is optimal in $E,B$, and parameters determining the thick set $S$. For details, see the remark below Theorem~\ref{thm:main}. 
In particular, for fixed $B$ and $S$, the constant $C(E,B,S)$ grows as $\exp (C \sqrt{E})$ for $E \to \infty$, which is essential for the applications to control theory. 

So far, examples of differential operators on $\RR^d$, for some $d \in \NN$, where the class of all measurable sets $S \subset \RR^d$ leading to a spectral inequality has been identified, are rare: 
One example is the free (negative) Laplacian $- \Delta$, where spectral inequalities can be inferred from the Kovrijkine-Logvinenko-Sereda theorem~\cite{Kovrijkine-00, EgidiV-18, WangWZZ-19}, another one is the harmonic Laplacian $- \Delta + \lvert x \rvert^2$ where explicit calculations are possible~\cite{BeauchardJPS-21, EgidiS-21}.
In a number of recent works, which spectral inequalities on thick sets are proven for perturbations of these operators~\cite{leBalchHM-24,Zhu-24,SuSY-25} whence the optimal criterion is also known in these cases.
Our main result, Theorem~\ref{thm:main}, now adds the Landau operator to this exclusive club.

Estimates as in~\eqref{eq:spectral_inequality_intro}, albeit without an explicit depencence of the constant $E$, have also been used in the context of Anderson localization for random Schr\"odinger operators where they are known as \emph{unique continuation principles}.
Even without the quantitative dependence of the constant on $E$ (which might give rise to further developments), our results yield immediate improvements of existing works since we no longer need to assume that $S$ is open, an ubiquitous technical assumption so far.
Another application of spectral inequalities are semiclassical estimates of restrictions of the Laplacian or the Landau operator onto bounded domains with Dirichlet or Neumann boundary conditions, see the recent~\cite{FrankLP-25} where our Theorem~\ref{thm:main} has been used. 

From a technical point of view, our main contribution are what we call \emph{magnetic Bernstein inequalities} (Theorem~\ref{thm:magnetic_Bernstein}).
Indeed, we are aware of two established strategies for proving spectral inequalities: On the one hand, the Kovrijkine-Logvinenko-Sereda theorem, on the other hand Carleman inequalities.
While the latter strategy offers more flexibility in terms of the choice of the operator, it usually requires the sampling set $S$ to be open.
The Kovrijkine-Logvinenko-Sereda theorem on the other hand crucially relies on so-called Bernstein inequalities which bound (the $L^2$-norm of) derivatives of functions in spectral subspaces to infer analyticity.
While almost trivial for the pure Laplacian, it turns out that in the case of the Landau operator, (ordinary) Bernstein inequalities no longer hold, see Remark~\ref{rem:bound_impossible}.
However, our workaround will be to work with covariant \emph{magnetic derivatives} and then use corresponding \emph{magnetic Bernstein inequalities} in $L^2$-norm to infer (ordinary) \emph{Bernstein-type inequalties in $L^1$-norm}.

The paper is organized as follows:
Section~\ref{sec:Definitions_and_results} contains definitions and our main results, namely the optimal spectral inequality for the Landau operator on $\RR^2$ (Theorem~\ref{thm:main}), as well as its restriction to boxes of finite volume (Theorem~\ref{thm:main_bounded_domain}).
In Section~\ref{sec:Magnetic_Bernstein_inequalities}, we prove the magnetic Bernstein inequalities (Theorem~\ref{thm:magnetic_Bernstein}), and Bernstein-type inequalities for the Landau operator in $L^1$-norm (Theorem~\ref{thm:magnetic_Bernstein_2}).
Also, Section~\ref{sec:Magnetic_Bernstein_inequalities} contains remarks and lemmas on optimality of our main results.
Section~\ref{sec:proofs} uses the Bernstein-type inequalities to prove Theorem~\ref{thm:main}.
In Section~\ref{sec:finite_volume}, we explain the necessary modifications for the finite-volume analogon.
Finally, Section~\ref{sec:applications} contains applications: Subsection~\ref{sec:heat} is about controllability and sharp control cost estimates for the magnetic heat equation (Theorems~\ref{thm:controlled_heat_1} and~\ref{thm:controlled_heat_2}) whereas Subsection~\ref{sec:RSO} contains applications to random Schr\"odinger operators, namely Wegner estimates, regularity of the integrated density of states, and Anderson localization in the continuum Anderson model in the case where the single-site potential is no longer assumed to be positive on an open, but merely on a measurable set.

\section{Definitions and main Results}
	\label{sec:Definitions_and_results} 

For $x = (x_1, x_2) \in \RR^2$, we denote by $\lvert x \rvert = (x_1^2 + x_2^2)^{1/2}$ its Euclidean norm and by $\lvert x \rvert_1 = \lvert x_1 \rvert + \lvert x_2 \rvert$ its $1$-norm.
The expression $\vol (S)$ refers to the Lebesgue measure of a measurable set $S \subset \RR^2$.
We will occasionally also use the one-dimensional Hausdorff measure of subsets of line segments in $\RR^2$ and denote the one-dimensional measure of such a set $T$ by $\vol_1 (T)$ for clarity.
For a measurable set $S$, $\mathbf{1}_S$ denotes its indicator function.
In particular, given a self-adjoint operator $A$, and $E \in \RR$, we denote by $\mathbf{1}_{(- \infty, E]}(A)$ the orthogonal projector onto the spectral subspace up to energy $E$, coresponding to $A$.
We write $C_0^\infty(\RR^2)$ for the space of smooth functions with compact support and $\mathcal{S}(\RR^2)$ for the space of Schwarz functions, that are smooth functions all derivatives of which decay faster at infinity than any polynomial.
We also denote by $\partial_i := \frac{\mathrm{d}}{\mathrm{d x_i}}$ the partial derivative with respect to the $x_i$ coordinate.

\begin{definition}
	\label{def:thick}
	Let $\ell = (\ell_1, \ell_2) \in (0,\infty)^2$, and $\rho  \in (0,1]$.
	A measurable set $S \subseteq \RR^2$ is called \emph{$(\ell, \rho)$-thick} if for every rectangle $Q$ with side lengths $(\ell_1, \ell_2)$, parallel to the axes, we have
	\[
	\vol ( S \cap Q )
	\geq
	\rho
	\vol ( Q )
	.
	\]
\end{definition}

If $S$ is $(\ell, \rho)$-thick for some $\ell, \rho$, it is also simply called \emph{thick}.
In the literature, one also finds the equivalent notion of \emph{relative dense} sets.
Thick sets seem to have originated in Fourier analysis~\cite{Paneah-61, Kacnelson-73, LogvinenkoS-74, Kovrijkine-00, Kovrijkine-01} but have attracted interest in the recent years~\cite{EgidiV-18, WangWZZ-19, LebeauM-19, EgidiV-20, MartinPS-20, BeauchardJPS-21, GreenJM-22, Taeufer-23, WangZ-23}.
\begin{definition}
	For $B > 0$ let 
	\[
	\tildel_1 = i \partial_1 -\frac{B}{2} x_2
	\quad
	\text{and}
	\quad 
	\tildel_2 = i \partial_2 + \frac{B}{2} x_1
	\]
	 be the \emph{magnetic derivatives at magnetic field strength $B$}.
The Landau Hamiltonian is 
\[
	H_B
	=
	\tildel_1^2 + \tildel_2^2.
\]
\end{definition}

Clearly, $H_B$ can be written in the form
\[
	H_B
	=
	(i \nabla - A)^2.
\]
with the \emph{magnetic potential} $A = \frac{B}{2} (- x_2, x_1)$.
Indeed, this is the so-called \emph{symmetric gauge} and any $A'$ with $\partial_1 A'_2 - \partial_2 A'_1 = B$ will lead to a unitarily equivalent operator.
It is well-known that $H_B$ is a self-adjoint operator in $L^2(\RR^2)$, an operator core being $C_0^\infty(\RR^2)$, with spectrum $\sigma(H_B) = \{B, 3 B, 5B, \dots \}$.
Our first main result is:

\begin{theorem}
	\label{thm:main}
	Let $B > 0$ and let $S \subseteq \RR^2$ be $(\ell, \rho)$-thick.
	Then, there are $C_1, C_2, C_3, C_4 > 0$, such that for all $E > 0$ we have
	\begin{equation*}
		\label{eq:spectral_inequality_main_theorem}
	\lVert f \rVert_{L^2(\RR^2)}^2
	\leq
	\left(
	\frac{C_1}{\rho}
	\right)^{C_2 + C_3 \lvert \ell \rvert_1 \sqrt{E} + C_4 (\lvert \ell \rvert_1^2 B )}
	\lVert f \rVert_{L^2(S)}^2
	\quad
	\text{for all $f \in \ran \mathbf{1}_{(-\infty, E]}(H_B)$}.
	\end{equation*}
\end{theorem}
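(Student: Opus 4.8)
The plan is to follow the Kovrijkine-Logvinenko-Sereda strategy, adapted to the magnetic setting via the magnetic Bernstein inequalities of Theorem~\ref{thm:magnetic_Bernstein} and their $L^1$-consequence in Theorem~\ref{thm:magnetic_Bernstein_2}. First I would fix $f \in \ran \mathbf{1}_{(-\infty,E]}(H_B)$ and consider the single-variable slices: for each fixed $x_2$, the function $x_1 \mapsto f(x_1,x_2)$ (and symmetrically) should, thanks to the $L^1$-Bernstein-type bounds, behave like the restriction of an analytic function whose growth is controlled by $\sqrt{E}$ and by $B$. Concretely, the $L^1$-Bernstein inequalities give, for every $k$, a bound of the form $\lVert \tildel_j^{k} f \rVert_{L^1} \le (C \max(\sqrt E, \sqrt B))^{k}\, k!^{?}\, \lVert f \rVert_{L^1}$ (the precise exponents being exactly what Theorem~\ref{thm:magnetic_Bernstein_2} supplies), and since the magnetic derivative differs from the ordinary one by the multiplication operators $\pm \tfrac{B}{2}x_2$, $\pm \tfrac{B}{2}x_1$, one converts these into bounds on ordinary derivatives at the cost of powers of $\lvert \ell \rvert_1 B$ on each rectangle of the thick-set grid.

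The core of the argument is then the classical Turán-Nazarov / Remez-type lemma for functions with controlled analytic growth: on an interval $I$ of length $\ell_j$, if $g$ extends analytically to a neighborhood with $\sup |g| \le M \cdot \inf_{\text{good subinterval}} \dots$, then for a measurable subset $\omega \subset I$ with $|\omega| \ge \rho' |I|$ one has $\int_I |g| \le (C/\rho')^{N}\int_\omega |g|$ where $N$ is the logarithm of the analyticity/growth constant, here $N \sim C_2 + C_3 \lvert\ell\rvert_1\sqrt E + C_4 \lvert\ell\rvert_1^2 B$. I would apply this first in the $x_1$-direction on each horizontal strip, then in the $x_2$-direction (using $(\ell,\rho)$-thickness of $S$ together with a Fubini/Chebyshev argument to guarantee that on a set of $x_2$ of positive proportional measure the horizontal slice of $S$ is itself relatively dense), and finally sum the resulting local estimates over the grid of rectangles. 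A Cauchy–Schwarz step converts the $L^1$-over-$L^1$ comparison into the desired $L^2$-over-$L^2$ comparison; one must be slightly careful here, exploiting that $|f|^2$ (or $f$ itself, paired appropriately) again lies in a spectral subspace up to a controlled energy so that the Bernstein machinery still applies, or alternatively running the whole Turán argument directly on $|f|^2$.

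The main obstacle I anticipate is bookkeeping the constants so that the final exponent is \emph{linear} in $\lvert\ell\rvert_1\sqrt E$ and in $\lvert\ell\rvert_1^2 B$ — in particular, keeping the magnetic-potential corrections $\tfrac B2 x_j$ from producing spurious higher powers of $\ell$ or of $E$. This requires choosing the number of derivatives $k$ in the Bernstein step optimally ($k \sim \lvert\ell\rvert_1\sqrt E + \lvert\ell\rvert_1^2 B$) and tracking how the non-translation-invariance of $H_B$ interacts with the grid: on a rectangle $Q$ centered at a point $p$, one should gauge-transform so that the magnetic potential is small on $Q$, absorbing the shift into a unitary multiplication $\euler^{i \phi_p}$ that does not change $|f|$, so that the local constant depends only on the \emph{size} $\lvert\ell\rvert_1$ and not on the location of $Q$. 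Once the estimate is uniform across the grid, summation and the passage from $L^1$ to $L^2$ are routine, and one obtains exactly the stated bound with four structural constants $C_1,\dots,C_4$.
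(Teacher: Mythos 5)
Your overall skeleton (magnetic Bernstein inequalities feeding a Kovrijkine/Logvinenko--Sereda argument over a grid of $\ell$-rectangles) matches the paper, and your slicing variant (one-dimensional Remez-type estimates on horizontal slices plus a Fubini/Chebyshev selection in the other variable) is a legitimate alternative to the paper's reduction to a single well-chosen line segment in Lemma~\ref{lem:local_estimate}. However, the way you handle the central difficulty is where the proposal breaks down. First, the $L^1$-Bernstein input is misquoted: Theorem~\ref{thm:magnetic_Bernstein_2} bounds \emph{ordinary} derivatives of $\lvert f\rvert^2$ in $L^1(\RR^2)$ by $\lVert f\rVert_{L^2}^2$, not magnetic derivatives of $f$ in $L^1$. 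Your plan to convert magnetic into ordinary derivatives of $f$ itself, rectangle by rectangle, via a local gauge change is exactly the problematic step: the global bound fails (Remark~\ref{rem:bound_impossible}), and after gauging, expanding $\partial^\alpha$ into magnetic derivatives plus multiplications by the shifted potential produces intermediate multiplication operators that are only bounded on $Q$ while the Bernstein estimates are global; commuting them out costs commutators and combinatorial factors that you would have to control to keep the exponent linear in $\lvert\ell\rvert_1\sqrt E+\lvert\ell\rvert_1^2B$ --- and you only flag this as an anticipated obstacle rather than resolving it. The paper sidesteps all of this by never differentiating $f$: the identity $i\partial_j(u\bar v)=\bar v\,\tildel_j u-u\,\overline{\tildel_j v}$ cancels the vector potential, so ordinary derivatives of $\lvert f\rvert^2$ are globally controlled, and the $L^1$ comparison for $\lvert f\rvert^2$ \emph{is} the desired $L^2$ comparison for $f$ because $\lVert\,\lvert f\rvert^2\rVert_{L^1(U)}=\lVert f\rVert_{L^2(U)}^2$. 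Relatedly, your $L^2$-upgrade step is wrong as stated: Cauchy--Schwarz goes in the wrong direction for turning an $L^1$ sampling inequality for $f$ into an $L^2$ one, and the claim that $\lvert f\rvert^2$ again lies in a spectral subspace of $H_B$ with controlled energy is unjustified (and unnecessary); only your parenthetical alternative of running the whole argument directly on $\lvert f\rvert^2$ is viable.

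Second, the summation over the grid is not ``routine'': the analytic growth constant $M$ entering the local Remez/Kovrijkine estimate cannot be controlled on every rectangle, since the $L^1$ Bernstein bounds are global and say nothing about an individual $Q_j$. The paper needs the good/bad rectangle selection (Definition~\ref{def:good_bad}, Lemma~\ref{lem:good_bad}), which guarantees that rectangles on which the local Bernstein bounds hold with a fixed constant carry at least half of $\lVert f\rVert_{L^2(\RR^2)}^2$, and then a ``good point'' $x_0$ inside each good rectangle at which all derivatives of $\lvert f\rvert^2$ are pointwise controlled, so that the extension $\Phi$ from Lemma~\ref{lem:analytic} is bounded on the polydisc required by Lemma~\ref{lem:local_estimate} with $\log M\lesssim 1+\lvert\ell\rvert_1\sqrt E+\lvert\ell\rvert_1^2B$. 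Your sketch applies the local estimate on each rectangle of the grid without any such selection, and without it the local constant is uncontrolled and the final summation does not close.
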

Let us comment on the expression
\[
	\left(
	\frac{C_1}{\rho}
	\right)^{C_2 + C_3 \lvert \ell \rvert_1 \sqrt{E} + C_4 (\lvert \ell \rvert_1^2 B )}
\]	
\begin{enumerate}
	\item
	\textbf{In the limit $B \to 0$}, the constant converges to the expression for the pure Laplacian in the Logvinenko-Sereda-Kovrikijne theorem~\cite{Kovrijkine-00}.
	So, one can indeed also set $B = 0$ in the statement of Theorem~\ref{thm:main} and in this sense, the dependence $\exp(C \sqrt{E})$ is optimal.
	Indeed, this is the first time that we aware of any dependence on $E$ in a spectral inequality for the Landau operator, and this precise dependence is necessary for the application to controllability of the magnetic heat equation, in Section~\ref{sec:heat}.
	\item
	\textbf{The relation of $E$ to $\ell$, and $B$ to $\ell$ is optimal}.
	Since $H_B$ is of second order in $\partial_1$, $\partial_2$ and of the same order in $B$, simultaneous scaling in $E$ and in $B$ corresponds to the square of the inverse scaling in space.
	\item
	\textbf{The term $\lvert \ell \rvert_1^2 B$ in the exponent is optimal} when $\lvert \ell \rvert_1$ is sent to $\infty$, see Remark~\ref{rem:large_ell_optimal}.
	\item
	\textbf{The dependence on $\lvert \ell \rvert_1$} yields a meaningful limit in the \textbf{homogenization regime}, that is when $\ell \to 0$:
	In this regime, the maximal size of holes in the set $S \subseteq \RR^2$ becomes small.
	On the one hand, since $\sqrt{E} \lvert \ell \rvert_1 \gg B \lvert \ell \rvert_1^2$ as $\ell \to 0$, we observe that in the homogenization regime, the influence of the magnetic field $B$ in the spectral inequality (at fixed $B$ and $E \geq B$) fades.
	On the other hand, when sending $\ell \to 0$, the exponent will disappear and the observation operator $\mathbf{1}_S$ strongly converges to an $E$-independent operator, see also the discussion in~\cite{NakicTTV-20}.
	\item
	\textbf{Thickness of $S$ is necessary} for any quantitative unique continuation principle of the form
	\[
	\lVert f \rVert_{L^2(\RR^2)}^2
	\leq
	C(E,\ell,B)
	\lVert f \rVert_{L^2(S)}^2
	\quad
	\text{for all $f \in \ran \mathbf{1}_{(-\infty, E]}(H_B)$}.
	\]
	This is proved in Theorem~\ref{thm:thickness_necessary_for_UCP}.
\end{enumerate}

We also have the corresponding result for finite-volume restrictions $H_{B,L}$ of $H_B$ onto boxes $\Lambda_L = (0, L_1) \times (0, L_2) \subseteq \RR^2$, where $L = (L_1,L_2) \in \RR_{> 0}^2$ satisfies the so-called~\emph{integer flux condition}, and $H_L$ is defined with appropriate magnetic boundary conditions, see Section~\ref{sec:finite_volume} for precise definitions.

\begin{theorem}
		\label{thm:main_bounded_domain}
		Let $B > 0$, and let $S \subseteq \RR^2$ be $(\ell, \rho)$-thick.
		Then there are $C_1, C_2, C_3, C_4 > 0$, such that for all
		$L = (L_1,L_2) \in (0, \infty)^2$ satisfying the integer flux condition
		\[
		B (L_2 - L_1) \in 2 \pi \ZZ,
		\]
		and
		$\ell_1 \leq L_1$, $\ell_2 \leq L_2$,
		we have
		\[
		\lVert f \rVert_{L^2(\Lambda_L)}^2
		\leq
		\left(
		\frac{C_1}{\rho}
		\right)^{C_2 + C_3 \lvert \ell \rvert_1 \sqrt{E} + C_4 (\lvert \ell \rvert_1^2 B )}
		\lVert f \rVert_{L^2(\Lambda_L \cap S)}^2
		\quad
		\text{for all $f \in \ran \mathbf{1}_{(-\infty, E]}(H_{B,L})$}.
	\]
\end{theorem}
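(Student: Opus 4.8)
The plan is to follow the proof of Theorem~\ref{thm:main}, recording only the changes forced by the passage to a box. First I would recall from Section~\ref{sec:finite_volume} the precise meaning of $H_{B,L}$ on $\Lambda_L = (0,L_1) \times (0,L_2)$: it is the Landau operator equipped with suitable magnetic boundary conditions, and the integer flux condition $B(L_2 - L_1) \in 2\pi\ZZ$ is precisely what makes those boundary conditions consistent, so that $H_{B,L}$ is a well-defined self-adjoint operator. On the corresponding domain one still has the factorization $H_{B,L} = \tildel_1^2 + \tildel_2^2$, and the spectrum is again $\sigma(H_{B,L}) = \{B, 3B, 5B, \dots\}$, the only difference being that each Landau level now carries finite multiplicity. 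The target is the same spectral inequality, with the same thickness parameters $(\ell,\rho)$ and, crucially, the \emph{same} exponent $C_2 + C_3 \lvert \ell \rvert_1 \sqrt{E} + C_4 (\lvert \ell \rvert_1^2 B)$ as in Theorem~\ref{thm:main}.

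The core observation is that the magnetic Bernstein inequalities of Section~\ref{sec:Magnetic_Bernstein_inequalities}, namely Theorems~\ref{thm:magnetic_Bernstein} and~\ref{thm:magnetic_Bernstein_2}, continue to hold with $H_B$ replaced by $H_{B,L}$ and $\RR^2$ replaced by $\Lambda_L$, with the same constants. This is because their proofs use only (i) the algebraic commutation relations satisfied by $\tildel_1$ and $\tildel_2$, equivalently by the raising and lowering operators $\tildel_1 \pm i \tildel_2$, which are unchanged on the magnetic box, and (ii) the decomposition of the spectral subspace below $E$ into the finitely many Landau levels below $E$, which has the same structure in finite volume. In particular the ordinary Bernstein-type inequality in $L^1$-norm (Theorem~\ref{thm:magnetic_Bernstein_2}), from which the quantitative analyticity of $f$ is extracted, transfers for the same reason, the relevant manipulations being carried out on rectangles lying in the interior of $\Lambda_L$.

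Granting this, the Kovrijkine-type argument of Section~\ref{sec:proofs} goes through essentially verbatim. Since $\ell_1 \leq L_1$ and $\ell_2 \leq L_2$, one can cover $\Lambda_L$ by \emph{finitely many} axis-parallel rectangles $Q$ of side lengths $(\ell_1, \ell_2)$ with uniformly bounded overlap; a minor shift of the rectangles meeting the far edges handles the case where $L_i/\ell_i$ is not an integer. On each such $Q \subseteq \Lambda_L$, Definition~\ref{def:thick} gives $\vol(S \cap Q) \geq \rho\, \vol(Q)$, and the local analyticity estimates from the previous step convert this, on the \emph{good} rectangles of Kovrijkine's dichotomy, into the local spectral inequality $\lVert f \rVert_{L^2(Q)}^2 \leq (C_1/\rho)^{C_2 + C_3 \lvert \ell \rvert_1 \sqrt{E} + C_4 (\lvert \ell \rvert_1^2 B)} \lVert f \rVert_{L^2(S \cap Q)}^2$, while the bad rectangles carry a controlled fraction of the total $L^2$-mass. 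Summing over this finite cover then yields the asserted inequality on $\Lambda_L$.

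I expect the main obstacle to be the core observation above: genuinely checking that the magnetic Bernstein machinery of Section~\ref{sec:Magnetic_Bernstein_inequalities}, and in particular the passage from the $L^2$ magnetic Bernstein inequalities to quantitative analyticity in $L^1$-norm, survives the replacement of $L^2(\RR^2)$ by $L^2(\Lambda_L)$ with magnetic boundary conditions \emph{without} introducing any $L$-dependence into the constants, so that the exponent remains exactly the one in Theorem~\ref{thm:main}. One must ensure that the creation/annihilation calculus, the projections onto the individual Landau levels, and the local-in-$x$ nature of the analyticity step are all compatible with the boundary identifications. The only other point requiring (routine) care is the covering of $\Lambda_L$ by $\ell$-rectangles when $\ell$ and $L$ are incommensurable.
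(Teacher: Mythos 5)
Your outline follows the same route as the paper (transfer the magnetic Bernstein machinery to $H_{B,L}$, cover $\Lambda_L$ by $\ell$-rectangles with bounded overlap as in Remark~\ref{rem:overlap}, then run the Kovrijkine good/bad argument), but the two points you yourself flag as ``the main obstacle'' are left unresolved, and one of them is handled by a claim that would fail. First, the finite-volume analogue of Theorem~\ref{thm:magnetic_Bernstein} is not just ``commutation relations plus Landau levels'': the identity $\sum_{\alpha}\lVert \tildel^{\alpha} f\rVert_{L^2}^2 = \langle f, F_m(H_{B,L}) f\rangle$ is obtained by repeated integration by parts, and on $\Lambda_L$ one must show that the boundary terms cancel. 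On $\RR^2$ this was automatic because the spectral projector maps into $\mathcal{S}(\RR^2)$; on the box one needs that all higher magnetic derivatives of $f\in\ran\mathbf{1}_{(-\infty,E]}(H_{B,L})$ again satisfy the magnetic-periodic boundary conditions. The paper supplies this via the inclusion $\mathcal{D}(H_{B,L}^k)\subseteq \mathcal{H}_{B,\mathrm{per}}^{2k}(\Lambda_L)$ for all $k\in\NN$ (proved through the Floquet transform), together with $\ran\mathbf{1}_{(-\infty,E]}(H_{B,L})\subset\bigcap_k \mathcal{D}(H_{B,L}^k)$; your proposal asserts the transfer but gives no such mechanism.

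Second, your remark that ``the relevant manipulations are carried out on rectangles lying in the interior of $\Lambda_L$'' does not work. The covering of $\Lambda_L$ necessarily contains rectangles $Q$ touching the boundary, and the local estimate (Lemma~\ref{lem:local_estimate}) requires an analytic continuation of $\lvert f\rvert^2$ to the complex polydisc $Q + D_{(4\ell_1,4\ell_2)}$, whose real trace leaves $\Lambda_L$ for such $Q$; these rectangles cannot be discarded, since the good/bad summation has to recover all of $\lVert f\rVert_{L^2(\Lambda_L)}^2$. The paper's resolution is to extend every $f\in\ran\mathbf{1}_{(-\infty,E]}(H_{B,L})$ beyond $\Lambda_L$ by the magnetic translations $\Gamma_y$ (this is exactly where the integer flux condition enters), to prove the analogue of Lemma~\ref{lem:analytic} for this extension on boxes of fixed size depending only on $\ell$, using a \emph{local} Sobolev estimate in place of the global one --- which is also what keeps all constants independent of $L$ --- and only then to apply the covering with bounded overlap. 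Your covering and summation step is fine, but without the periodic extension and the integration-by-parts justification the finite-volume Bernstein and analyticity inputs on which it rests are not established.
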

	
Estimates as in Theorem~\ref{thm:main_bounded_domain} have been commonly used in the context of the spectral theory of random Schr\"odinger operators where they are also known as \emph{Quantitative Unique continuation principles}, see \cite{CombesHK-03, CombesHKR-04, GerminetKS-07, CombesHK-07, TaeuferV-16}.

However, previous results neither carried the explicit dependence on the parameters $E,B$ nor were they valid beyond open sets, whereas Theorem~\ref{thm:main_bounded_domain} allows for any subset $S \subset \Lambda_L$ of positive measure.
 We explain in Section~\ref{sec:RSO} how this leads to improvements.

\section{Magnetic Bernstein inequalities}
	\label{sec:Magnetic_Bernstein_inequalities}

In this section, we state and prove magnetic Bernstein inequalities.
The first step will be to express, for $f \in \ran \mathbf{1}_{(- \infty, E]}(H_B)$ and $m \in \NN$, sums of derivatives of the form
\[
	\sum_{\alpha \in \{1,2\}^m}
	\lVert 
		\tildel_{\alpha_1} \tildel_{\alpha_2} \dots \tildel_{\alpha_m} 
		f
	\rVert_{L^2(\RR^2)}^2,
\]
where $(\alpha_1, \dots, \alpha_m)$ are the entries of a vector $\alpha \in \{1,2\}^m$, in terms of $H_B$.
For this purpose, we need to better understand the algebra generated by the magnetic derivatives $\tildel_1, \tildel_2$.

The magnetic derivatives $\tildel_1$ and $\tildel_2$ satisfy the commutator relation
	\begin{align}
	\label{eq:commutator_relation}
		 [\tildel_1 ,  \tildel_2 ] &= \left[i \partial_1 - \frac{B}{2} x_2 , i \partial_2 + \frac{B}{2} x_1 \right] 
		 =
		 \left[i \partial_1, \frac{B}{2} x_1 \right] - \left[\frac{B}{2} x_2, i\partial_2 \right] = iB.
	\end{align}
Consider the algebra $\mathcal{X}$ of all polynomials in $\tildel_1, \tildel_2$ modulo the commutator relation~\eqref{eq:commutator_relation}.
Clearly, polynomials of $H_B$ form a subalgebra of $\mathcal{X}$.
We define a linear operator $R$ mapping $\mathcal{X}$ to itself, defined via
\[
	R(P)
	=
	\tildel_1 P \tildel_1
	+
	\tildel_2 P \tildel_2
\]
for any polynomial $P$ in the variables $\tildel_1$ and $\tildel_2$.
The key idea is now that
\begin{align}	
	\label{eq:explain_R}
	\sum_{\alpha \in \{1,2\}^m}
	\lVert 
		\tildel_{\alpha_1} \tildel_{\alpha_2} \dots \tildel_{\alpha_m} f
	\rVert_{L^2(\RR^2)}^2
	=
	\left \langle f , R^m (\operatorname{Id}) f \right \rangle
\end{align}
for sufficiently regular $f$, say $f \in \mathcal{S}(\RR^2)$.
This can be verified by integration by parts and is explained in the proof of Theorem~\ref{thm:magnetic_Bernstein}.

The following Lemma~\ref{lem:recursion} is the first key result of this section, stating that $R^m(\operatorname{Id})$ is not only a polynomial in the variables $\tildel_1, \tildel_2$, but actually a polynomial in $H_B$.
The subsequent Lemma~\ref{lem:bound_on_F_n} then provides an explicit bound on this polynomial, allowing to replace $R^m(\operatorname{Id})$ in~\eqref{eq:explain_R} by a polynomial in $H_B$.

\begin{lemma}
	\label{lem:recursion}
	For all $m \geq 0$, the operator given by $R^m(\operatorname{Id})$ is a polynomial in $H_B$, which we denote by $F_m$, that is
	\[
	F_m(H_B) := R^m(\operatorname{Id}).
	\]
	Furthermore,
	\begin{equation}
	\label{eq:recursion}
	F_{m+1}(H_B)
	=
	R(F_m(H_B))
	=
	\frac{1}{2}
	\left( \left( H_B - B \right) F_m( H_B - 2B) + \left( H_B + B \right) F_m (H_B + 2 B) \right).
	\end{equation}
\end{lemma}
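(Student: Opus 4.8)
The plan is to diagonalize the recursion by passing to bosonic ladder operators. Introduce, as elements of $\mathcal{X}$, the operators
\[
	A := \tildel_1 + i \tildel_2,
	\qquad
	A^* := \tildel_1 - i \tildel_2 .
\]
Using only the commutator relation~\eqref{eq:commutator_relation}, a direct computation gives $A^* A = \tildel_1^2 + \tildel_2^2 + i [\tildel_1, \tildel_2] = H_B - B$ and $A A^* = \tildel_1^2 + \tildel_2^2 - i [\tildel_1, \tildel_2] = H_B + B$; in particular $[A, A^*] = 2B$. Right-multiplying $A^* A = H_B - B$ by $A^*$ and using $A A^* = H_B + B$ yields the intertwining relation $A^* H_B = (H_B - 2B) A^*$, and symmetrically $A H_B = (H_B + 2B) A$. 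Iterating these gives $A^* g(H_B) = g(H_B - 2B) A^*$ and $A g(H_B) = g(H_B + 2B) A$ for every polynomial $g$.

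Next I would rewrite $R$ in these variables. From $\tildel_1 = \tfrac{1}{2}(A + A^*)$ and $\tildel_2 = -\tfrac{i}{2}(A - A^*)$, expanding $R(P) = \tildel_1 P \tildel_1 + \tildel_2 P \tildel_2$ makes the $A P A$ and $A^* P A^*$ contributions cancel, leaving
\[
	R(P) = \tfrac{1}{2} \left( A P A^* + A^* P A \right) .
\]
The lemma then follows by induction on $m$. The base case is $R^0(\operatorname{Id}) = \operatorname{Id} = F_0(H_B)$ with $F_0 \equiv 1$. For the inductive step, assume $R^m(\operatorname{Id}) = F_m(H_B)$ for some polynomial $F_m$; applying the formula for $R$ and then the intertwining relations,
\[
	R^{m+1}(\operatorname{Id})
	=
	\tfrac{1}{2} \left( F_m(H_B + 2B) A A^* + F_m(H_B - 2B) A^* A \right)
	=
	\tfrac{1}{2} \left( F_m(H_B + 2B)(H_B + B) + F_m(H_B - 2B)(H_B - B) \right) ,
\]
which is again a polynomial in $H_B$. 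Naming it $F_{m+1}(H_B)$ and using that functions of $H_B$ commute yields exactly~\eqref{eq:recursion}.

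The whole argument is algebraic and takes place inside $\mathcal{X}$, so no analytic issue arises; the only point requiring care is that the intertwining identities and the formula for $R$ are genuine consequences of the single relation~\eqref{eq:commutator_relation}. I expect the one mildly delicate step to be the verification of $R(P) = \tfrac{1}{2}(A P A^* + A^* P A)$, i.e.\ correctly tracking the factors of $i$ and $\tfrac{1}{2}$ and checking the cancellation of the $A P A$ and $A^* P A^*$ terms; everything else is immediate. As a sanity check, $F_0 \equiv 1$ gives $F_1(H_B) = H_B$ via~\eqref{eq:recursion}, consistent with $R(\operatorname{Id}) = \tildel_1^2 + \tildel_2^2 = H_B$.
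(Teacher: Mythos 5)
Your argument is correct: all the algebraic identities check out ($A^*A=H_B-B$, $AA^*=H_B+B$, the intertwining relations $A^{*}g(H_B)=g(H_B-2B)A^{*}$, $A\,g(H_B)=g(H_B+2B)A$, and the rewriting $R(P)=\tfrac12(APA^{*}+A^{*}PA)$ all follow from~\eqref{eq:commutator_relation} alone), and the induction then delivers~\eqref{eq:recursion} exactly, including the base case. The route is genuinely different in presentation from the paper's: the paper reduces to monomials $H_B^n$, introduces the pair $X_n=i\tildel_2H_B^{n-1}\tildel_1-i\tildel_1H_B^{n-1}\tildel_2$, $Y_n=\tildel_1H_B^{n-1}\tildel_1+\tildel_2H_B^{n-1}\tildel_2$, derives a two-by-two matrix recursion with matrix $\bigl(\begin{smallmatrix}H_B&2B\\2B&H_B\end{smallmatrix}\bigr)$, and diagonalizes it explicitly; you instead pass to the ladder operators $A,A^{*}$ and use the intertwining relations, which is precisely the paper's diagonalization carried out at the operator level (indeed $A^{*}H_B^{\,n-1}A=Y_n-X_n$ and $AH_B^{\,n-1}A^{*}=Y_n+X_n$ are the eigenvector combinations of the paper's matrix). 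What your version buys is conceptual transparency and economy: no reduction to monomials, no matrix powers, and the harmonic-oscillator-type structure of the Landau Hamiltonian is made explicit, which also makes clear why the argument is tied to the two-dimensional case (a single ladder pair), echoing the remark following the lemma. The paper's version buys a self-contained elementary computation that produces the closed formula for $2R(H_B^n)$ directly. The one step you flagged as delicate, the cancellation of the $APA$ and $A^{*}PA^{*}$ terms with the correct factors of $i$ and $\tfrac12$, does indeed hold, so there is no gap.
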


\begin{proof}
	Since $R$ is linear, it suffices to consider monomials, and to see~\eqref{eq:recursion}, it certainly suffices to show
	\begin{equation*}
	\label{eq:recursion_R}
	2 R(H_B^n)
	=
	\left( H_B - B \right) (H_B - 2 B)^n 
	+ 
	\left( H_B + B \right) (H_B + 2 B)^n
	\end{equation*}
	for each $n \geq 0$.  We have	
	\begin{align*}
	R(H_B^n) 
	&=
	\tildel_1 H_B^n \tildel_1
	+
	\tildel_2 H_B^n \tildel_2.
	\end{align*}
	Define
	\[
	X_n := i \tildel_2 H_B^{n-1} \tildel_1 - i \tildel_1 H_B^{n-1} \tildel_2,
	\quad
	\text{and}
	\quad
	Y_n := \tildel_1 H_B^{n-1} \tildel_1 + \tildel_2 H_B^{n-1} \tildel_2.
	\]
	The commutator identity~\eqref{eq:commutator_relation} leads to	
	\[
		H_B \tildel_1 = \tildel_1 H_B - 2 i B \tildel_2,
		\quad
		\text{and}
		\quad
		H_B \tildel_2 = \tildel_2 H_B + 2 i B \tildel_1.
	\]
	In particular, this implies $X_1 = B$, $Y_1 = H_B$, as well as
	\begin{align*}
	\begin{pmatrix}
	X_{n+1}
	\\
	Y_{n+1}
	\end{pmatrix}
	=
	\begin{pmatrix}
	H_B & 2 B \\
	2 B & H_B \\
	\end{pmatrix}
	\begin{pmatrix}
	X_{n}
	\\
	Y_{n}
	\end{pmatrix}
	\end{align*}
	Diagonalizing 
	\[
	\begin{pmatrix}
	H_B & 2 B \\
	2 B & H_B \\
	\end{pmatrix}
	=
	\frac{1}{2}
	\begin{pmatrix}
	-1 & 1 \\
	1  & 1 \\
	\end{pmatrix}
	\begin{pmatrix}
	H_B - 2 B & 0 \\
	0 & H_B + 2 B \\
	\end{pmatrix}
\begin{pmatrix}
	-1 & 1 \\
	1  & 1 \\
	\end{pmatrix}
	\]
	we obtain
	\[
	\begin{pmatrix}
	X_{n+1}
	\\
	Y_{n+1}
	\end{pmatrix}
	=
	\begin{pmatrix}
	H_B & 2 B \\
	2 B & H_B \\
	\end{pmatrix}^n
	\begin{pmatrix}
	B
	\\
	H_B
	\end{pmatrix}
	=
	\frac{1}{2} 
	\begin{pmatrix}
	-1 & 1 \\
	1  & 1 \\
	\end{pmatrix}
	\begin{pmatrix}
	H_B - 2B & 0 \\
	0 & H_B +2 B \\
	\end{pmatrix}^n
\begin{pmatrix}
	-1 & 1 \\
	1  & 1 \\
	\end{pmatrix}
	\begin{pmatrix}
	B
	\\
	H_B
	\end{pmatrix}
	\]
	which leads to
	\[
	2 R(H_B^n)
	=
	2 Y_{n+1}
	=
	\left( H_B - B \right) (H_B - 2 B)^n 
	+ 
	\left( H_B + B \right) (H_B + 2 B)^n.
	\qedhere
	\]

\end{proof}

\begin{remark}
	It seems that Lemma~\ref{lem:recursion} relies on the particular structure of the two-dimensional Landau Hamiltonian.
	Indeed, the recursive technique already breaks down in the case of three-dimensional constant magnetic fields: Given a magnetic field $0 \neq (B_1, B_2, B_3) \in \RR^d$, the corresponding magnetic Schrödinger operator can be written in the form
	\[
	H_{(B_1, B_2, B_3)}
	=
	\tildel_1^2 + \tildel_2^2 + \tildel_3^2
	\]
	where the covariant derivatives $\tildel_1$, $\tildel_2$, $\tildel_3$ must satisfy the commutator relations
	\[
	[\tildel_1, \tildel_2] = i B_3,
	\quad
	[\tildel_2, \tildel_3] = i B_1,
	\quad
	[\tildel_3, \tildel_1] = i B_2.
	\]
	Now, if one defines the three-dimensional version $R_{(3)}$ of the operator $R$, acting as
	\[
	R_{(3)}(P)
	=
	\tildel_1 P \tildel_1
	+
	\tildel_2 P \tildel_2
	+
	\tildel_3 P \tildel_3
\]
	on polynomials in $\tildel_1, \tildel_2, \tildel_3$, it is straightforward to calculate that already $R^2(\operatorname{Id})$ will no longer be a polynomial in $H_{(B_1, B_2, B_3)}$.
	We conclude that in order to treat operators beyond the two-dimensional Landau operators -- in particular higher-dimensional magnetic Schrödinger operators based on covariant derivatives with a richer commutator algebra -- one might need to use arguments beyond the ones used in the proof of Lemma~\ref{lem:recursion}.
\end{remark}

Next, we use the recursive identity~\eqref{eq:recursion} to provide explicit bounds on the $F_m$.

\begin{lemma}
	\label{lem:bound_on_F_n}
	For every $t \in B (2 \NN + 1)$ and $m \in \NN$ we have
	\[
	\frac{1}{2^m}
	(t + B) (t + 3 B) \dots (t + (2m-1) B)
	\leq
	F_m(t)
	\leq
	(t + B) (t + 3 B) \dots (t + (2m-1) B).
	\]
	In particular,
	\[
	\lVert F_m (H_B) \mathbf{1}_{(- \infty, E]}(H_B) \rVert
	=
	\max 
	\left\{
		\lvert F_m(t) \rvert
		\colon
		t \in \sigma(H_B) \cap (- \infty, E]
	\right\}
	\leq
	(E + m B)^m.
	\]
\end{lemma}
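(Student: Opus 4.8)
The plan is to prove the two-sided estimate by induction on $m$ from the recursion~\eqref{eq:recursion} in Lemma~\ref{lem:recursion}, and then to deduce the operator-norm bound from the spectral theorem together with the arithmetic--geometric mean inequality. Throughout I would write $G_m(t):=\prod_{k=1}^{m}\bigl(t+(2k-1)B\bigr)$ for the conjectured upper envelope, with the convention $G_0\equiv 1$, so that the claimed inequality reads $2^{-m}G_m(t)\le F_m(t)\le G_m(t)$ for every $t\in B(2\NN+1)$. Before starting the induction I would record two elementary facts about $G_m$, valid for all $t\in B(2\NN+1)$: reindexing the product gives $(t+B)\,G_m(t+2B)=G_{m+1}(t)$, while cancelling the common factors in $(t-B)\,G_m(t-2B)$ and in $G_{m+1}(t)$ reduces the inequality $(t-B)\,G_m(t-2B)\le G_{m+1}(t)$ to $(t-B)^2\le \bigl(t+(2m-1)B\bigr)\bigl(t+(2m+1)B\bigr)$, which holds since each factor on the right is $\ge t-B\ge 0$.

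For the induction, the base case $m=0$ is immediate, as $F_0\equiv 1=G_0$. For the inductive step I would fix $t\in B(2\NN+1)$ and note that $t+2B\in B(2\NN+1)$, and that $t-2B\in B(2\NN+1)$ unless $t=B$, in which case the prefactor $t-B$ vanishes; in either situation the induction hypothesis yields
\[
(t-B)\,2^{-m}G_m(t-2B)\;\le\;(t-B)\,F_m(t-2B)\;\le\;(t-B)\,G_m(t-2B)
\]
together with the analogous chain for the $t+2B$ term. Substituting these into~\eqref{eq:recursion} and using the two facts above, the upper bound follows at once from $F_{m+1}(t)\le\tfrac12\bigl(G_{m+1}(t)+G_{m+1}(t)\bigr)=G_{m+1}(t)$, and the lower bound follows after discarding the non-negative summand $(t-B)\,G_m(t-2B)$, which leaves $F_{m+1}(t)\ge\tfrac12\cdot 2^{-m}(t+B)\,G_m(t+2B)=2^{-(m+1)}G_{m+1}(t)$.

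For the ``in particular'' statement, the equality with the maximum is the spectral theorem: by the functional calculus $F_m(H_B)\,\mathbf 1_{(-\infty,E]}(H_B)=g(H_B)$ with $g=F_m\cdot\mathbf 1_{(-\infty,E]}$, and since $\sigma(H_B)=B(2\NN+1)$ is discrete the norm of $g(H_B)$ equals $\max\{|F_m(t)|:t\in\sigma(H_B),\ t\le E\}$ (understood non-empty, i.e.\ $E\ge B$; otherwise the operator vanishes). On that set $F_m(t)>0$ by the lower bound just proved, so $|F_m(t)|=F_m(t)\le G_m(t)$, and the arithmetic--geometric mean inequality applied to the $m$ positive numbers $t+B,t+3B,\dots,t+(2m-1)B$, whose average equals $t+\tfrac{B}{m}\sum_{k=1}^m(2k-1)=t+mB$, gives $G_m(t)\le(t+mB)^m\le(E+mB)^m$.

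I expect the only genuinely delicate point to be the boundary behaviour of the induction: the vanishing of the factor $t-B$ at the bottom of the spectrum is precisely what lets the inductive step close at $t=B$ without ever evaluating $F_m$ outside $B(2\NN+1)$, and one must notice that the upper bound rests entirely on the clean quadratic inequality $(t-B)^2\le(t+(2m-1)B)(t+(2m+1)B)$. Everything else is bookkeeping with products and a one-line use of AM--GM.
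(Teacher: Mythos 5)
Your proof is correct, and it rests on the same key ingredient as the paper's -- the recursion~\eqref{eq:recursion} from Lemma~\ref{lem:recursion} -- but it executes it differently. The paper unrolls the recursion completely: $F_m(t)$ is written as $2^{-m}$ times a sum of $2^m$ products of factors $(t-kB)$ whose indices in neighbouring factors differ by $0,\pm 2$; each summand is argued to be non-negative (any factor that hits zero annihilates its product, and a factor cannot turn negative without the index chain passing through the zero), and the two bounds follow by keeping only the maximal summand, respectively replacing every summand by the maximal product $(t+B)(t+3B)\cdots(t+(2m-1)B)$. Your step-by-step induction replaces this combinatorial bookkeeping by the single elementary inequality $(t-B)^2\le\bigl(t+(2m-1)B\bigr)\bigl(t+(2m+1)B\bigr)$ together with the observation that the prefactor $t-B$ vanishes at the bottom of the spectrum, which is precisely how you avoid ever invoking the induction hypothesis off $B(2\NN+1)$; this makes rigorous exactly the points the paper treats informally, and your AM--GM step supplies the passage from the product bound to $(E+mB)^m$, which the paper states without comment. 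Two cosmetic remarks: in the lower bound you say you discard the summand $(t-B)G_m(t-2B)$, whereas what you discard is $(t-B)F_m(t-2B)$ -- its non-negativity is exactly what your displayed two-sided chain provides, so the argument is fine, but the wording should match; and since the paper's $\NN$ starts at $1$ you could equally well anchor the induction at $m=1$, where $F_1(t)=t$ and the claimed bounds read $\tfrac12(t+B)\le t\le t+B$ for $t\ge B$.
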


\begin{proof}
	By an iterative application of Lemma~\ref{lem:recursion}, $F_n(t)$ can be expressed as $2^{-n}$ times a sum of $2^n$ many products of factors of the form $(t - k B)$. 
	Each summand must have a term $(t \pm B)$, and parameters $k$ in neighbouring factors differ by $-2, 0$ or $+2$.
	Furthermore, as soon as $t - k B$ is zero, the summand containing this factor will vanish whence each summand is non-negative.
	The lower bound follows by dropping all but one term.
	The upper bound follows by replacing all $2^n$ many summands by the expression that maximises such products.
\end{proof}

With this, we can prove the magnetic Bernstein inequalities:
\begin{theorem}
	\label{thm:magnetic_Bernstein}
	For every $E, B \geq 0$ and $m \in \NN$, we have the magnetic Bernstein inequality
	\begin{equation}
	\label{eq:magnetic_Bernstein}
	\sum_{\alpha \in \{1,2\}^m}
	\lVert 
		\tildel_{\alpha_1} \tildel_{\alpha_2} \dots \tildel_{\alpha_m} f
	\rVert_{L^2(\RR^2)}^2
	\leq
	C_B(m)
	\lVert f \rVert_{L^2(\RR^2)}^2
	\quad
	\text{for all}
	\quad 
	f \in \operatorname{Ran} \mathbf{1}_{(- \infty, E]} (H_B),
	\end{equation}
	where
	\[
	C_B(m)
	=
	(E + B m)^m.
	\]
\end{theorem}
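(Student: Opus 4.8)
The plan is to rewrite the sum of squared magnetic derivatives as a single expectation of the operator $R^m(\operatorname{Id})$, then to feed in Lemmas~\ref{lem:recursion} and~\ref{lem:bound_on_F_n}, and finally to remove the regularity assumption by density. So the first step is to prove the identity~\eqref{eq:explain_R} for $f \in \mathcal{S}(\RR^2)$. Each magnetic derivative $\tildel_j$ is symmetric on $\mathcal{S}(\RR^2)$ — the term $i\partial_j$ because integration by parts produces no boundary term on Schwartz functions, the multiplication term because it is real — so $\langle \tildel_j u, v\rangle = \langle u, \tildel_j v\rangle$. Applying this repeatedly inside $\lVert \tildel_{\alpha_1}\cdots\tildel_{\alpha_m} f\rVert_{L^2(\RR^2)}^2$, successively moving $\tildel_{\alpha_1}$, then $\tildel_{\alpha_2}$, and so on to the right, yields
\[
	\lVert \tildel_{\alpha_1}\cdots\tildel_{\alpha_m} f \rVert_{L^2(\RR^2)}^2
	=
	\bigl\langle f,\; \tildel_{\alpha_m}\cdots\tildel_{\alpha_1}\,\tildel_{\alpha_1}\cdots\tildel_{\alpha_m} f\bigr\rangle .
\]
Summing over $\alpha \in \{1,2\}^m$ and relabelling each multi-index by its reverse gives $\langle f, R^m(\operatorname{Id}) f\rangle$, because a straightforward induction on $m$ from the definition $R(P) = \tildel_1 P \tildel_1 + \tildel_2 P \tildel_2$ shows $R^m(\operatorname{Id}) = \sum_{\alpha\in\{1,2\}^m} \tildel_{\alpha_1}\cdots\tildel_{\alpha_m}\,\tildel_{\alpha_m}\cdots\tildel_{\alpha_1}$. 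This is exactly~\eqref{eq:explain_R}.

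Next, I would combine this with the two lemmas. By Lemma~\ref{lem:recursion}, $R^m(\operatorname{Id}) = F_m(H_B)$ with $F_m$ a polynomial, hence for $f \in \mathcal{S}(\RR^2)$
\[
	\sum_{\alpha\in\{1,2\}^m}\lVert \tildel_{\alpha_1}\cdots\tildel_{\alpha_m} f\rVert_{L^2(\RR^2)}^2 = \langle f, F_m(H_B) f\rangle .
\]
If in addition $f \in \ran\mathbf{1}_{(-\infty,E]}(H_B)$, the spectral theorem together with the operator-norm bound of Lemma~\ref{lem:bound_on_F_n} gives $\langle f, F_m(H_B) f\rangle \le \lVert F_m(H_B)\mathbf{1}_{(-\infty,E]}(H_B)\rVert\,\lVert f\rVert_{L^2(\RR^2)}^2 \le (E + mB)^m \lVert f\rVert_{L^2(\RR^2)}^2 = C_B(m)\lVert f\rVert_{L^2(\RR^2)}^2$, which is~\eqref{eq:magnetic_Bernstein} for Schwartz $f$ in the spectral subspace.

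Finally I would remove the Schwartz assumption. Since $\sigma(H_B) = \{B, 3B, 5B, \dots\}$ is pure point with only finitely many eigenvalues below $E$, the finite linear combinations of Landau eigenfunctions lying in $\ran\mathbf{1}_{(-\infty,E]}(H_B)$ are dense in it, and all of these are Schwartz functions ((polynomial)$\times$Gaussian). Given $f \in \ran\mathbf{1}_{(-\infty,E]}(H_B)$, pick such $f_n \to f$ in $L^2$; applying the already-established inequality to $f_n - f_k$ shows each sequence $\bigl(\tildel_{\alpha_1}\cdots\tildel_{\alpha_m} f_n\bigr)_n$ is Cauchy in $L^2(\RR^2)$. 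Its limit coincides with the distributional expression $\tildel_{\alpha_1}\cdots\tildel_{\alpha_m} f$ (the operator is a differential operator with smooth coefficients, hence closable), so $f$ lies in the domain of every such product and, letting $n\to\infty$ in the inequality of the previous step, $\sum_{\alpha}\lVert \tildel_{\alpha_1}\cdots\tildel_{\alpha_m} f\rVert_{L^2(\RR^2)}^2 \le C_B(m)\lVert f\rVert_{L^2(\RR^2)}^2$. The degenerate case $B=0$ reduces directly, via Plancherel, to $\int_{\RR^2}\lvert\xi\rvert^{2m}\lvert\hat f(\xi)\rvert^2\,\drm\xi \le E^m\lVert f\rVert_{L^2(\RR^2)}^2$.

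The algebraic core — the collapse of $R^m(\operatorname{Id})$ to a polynomial in $H_B$ with the bound $(E+mB)^m$ — is already settled in Lemmas~\ref{lem:recursion} and~\ref{lem:bound_on_F_n}, so the only delicate point is the interface between the formal manipulations and the operator theory: the boundary-term-free integration by parts, the bookkeeping matching $R^m(\operatorname{Id})$ with the palindromic products produced by it, and above all the verification that a general element of the spectral subspace actually lies in the domains of all the products $\tildel_{\alpha_1}\cdots\tildel_{\alpha_m}$ (so that the left-hand side is even finite), which is what the density of Schwartz eigenfunction combinations plus closability provides.
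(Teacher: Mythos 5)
Your proposal is correct and follows essentially the same route as the paper: integration by parts on Schwartz functions to identify $\sum_{\alpha}\lVert\tildel^{\alpha}f\rVert_{L^2}^2$ with $\langle f, R^m(\operatorname{Id})f\rangle$, then Lemmas~\ref{lem:recursion} and~\ref{lem:bound_on_F_n} via the spectral theorem, then a density argument. The only cosmetic difference is the regularity step: the paper shows the spectral projector preserves $\mathcal{S}(\RR^2)$ via its explicit Gaussian-decaying kernel and extends from arbitrary Schwartz $f$ by density in $L^2$, whereas you approximate $f$ by finite combinations of the polynomial-times-Gaussian eigenbasis and identify the limits of $\tildel^{\alpha}f_n$ via closability -- both are sound.
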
 

\begin{proof}
Note that $\mathbf{1}_{(- \infty,E]}(H_B)$ is a finite sum of projectors onto the Landau Levels up to $E$.
These projectors have the integral kernel
\[
	K_{E,B}(x,y)
	=
	\frac{B}{2 \pi}
	\sum_{k \in \NN_0 \colon (2 k + 1) B \leq E}
	\exp
	\left(
		- \frac{B}{4} \lvert x - y \rvert^2 - i \frac{B}{2} (x_1 y_2 - x_2 y_1)
	\right)
	\mathcal{L}_k
	\left(
	\frac{B}{2}
	\lvert x - y \rvert^2
	\right)
\]
for $x,y \in \RR^2$ where $x_1,x_2$ and $y_1, y_2$ denote the corresponding first and second entries of $x$ and $y$, respectively, and the $\mathcal{L}_k$ are the Legendre polynomials, see~\cite{Fock-1928, Landau-1930}.
This kernel is smooth, exponentially decaying and therefore leaves the Schwarz space $\mathcal{S}(\RR^2)$ invariant, that is
\[
\mathbf{1}_{(- \infty, E]}(H_B) f \in \mathcal{S}(\RR^2)
\quad
\text{for all $f \in \mathcal{S}(\RR^2)$}.
\]
This allows to use integration by parts for the magnetic derivatives, and we calculate
\begin{align*}
	&\left\langle 
	\mathbf{1}_{(- \infty, E]}(H_B) f,
	F_n (H_B)
	\mathbf{1}_{(- \infty, E]}(H_B) f
	\right\rangle 
	=
	\left\langle 
	\mathbf{1}_{(- \infty, E]}(H_B) f,
	R^n(\operatorname{Id})
	\mathbf{1}_{(- \infty, E]}(H_B) f
	\right\rangle
	\\
	=
	&\sum_{\alpha \in \{1,2\}^m}
	\left\langle 
	\mathbf{1}_{(- \infty, E]}(H_B) f,
		\tildel_{\alpha_m} \tildel_{\alpha_{m-1}} \dots \tildel_{\alpha_1}
		\tildel_{\alpha_1} \tildel_{\alpha_2} \dots \tildel_{\alpha_m} 
		\mathbf{1}_{(- \infty, E]}(H_B) f
	\right\rangle 
	\\
	=
	&\sum_{\alpha \in \{1,2\}^m}
	\lVert 
		\tildel_{\alpha_1} \tildel_{\alpha_2} \dots \tildel_{\alpha_m} \mathbf{1}_{(- \infty, E]}(H_B) f
	\rVert_{L^2(\RR^2)}^2
\end{align*}
for all $f \in \mathcal{S}(\RR^2)$.
By density, this extends to all $f \in L^2(\RR^2)$. 
Together with Lemma~\ref{lem:bound_on_F_n}, we obtain the claim.
\end{proof}

\begin{remark}
	The classic Bernstein inequalities (in two dimensions) are
	\[
	\sum_{\alpha \in \{1,2\}^m}
	\lVert
	\partial_{\alpha_1}
	\partial_{\alpha_2}
	\dots
	\partial_{\alpha_m}
	f
	\rVert_{L^2(\RR^2)}^2
	\leq
	E^m
	\lVert f \rVert_{L^2(\RR^2)}^2
	\quad
	\text{for all}
	\quad
	f \in \ran \mathbf{1}_{(- \infty,E]} (- \Delta).
	\]
	They are an immediate consequence of the identity
	\[
	\sum_{\alpha \in \{1,2\}^m}
	\lVert
	\partial_{\alpha_1}
	\partial_{\alpha_2}
	\dots
	\partial_{\alpha_m}
	f
	\rVert_{L^2(\RR^2)}^2
	=
	\left\langle f, (- \Delta)^m f \right\rangle
	\]
	for sufficiently regular $f$, which follows from a repeated application of integration by parts.
	Note that, in contrast to the magnetic derivatives $\tildel_1, \tildel_2$, the classic derivatives $\partial_1, \partial_2$ commute. 
	Hence, one usually writes the right hand side of classic Bernstein inequalities in multi-index notation in the equivalent form
	\[
	\sum_{\lvert \mathfrak{n} \rvert = m}
	\frac{1}{\mathfrak{n}!}	
	\lVert 
	\partial^{\mathfrak{n}}
	f	
	\rVert_{L^2(\RR^2)}^2
	\leq
	\frac{E^m}{m!}
	\lVert 
	f	
	\rVert_{L^2(\RR^2)}^2,
	\]
	see~\cite{EgidiS-21} for an overview.
	For other operators, Bernstein-type inequalities are rather rare. 
	One notable exception where Bernstein-type estimates are known is the Harmonic Oscillator~\cite{BeauchardJPS-21, EgidiS-21}.

\end{remark}

From the proof of Theorem~\ref{thm:magnetic_Bernstein} it also follows that $\operatorname{Ran} \mathbf{1}_{(- \infty, \mu]} (H_B) \subseteq C^\infty(\RR^2)$. 

\begin{remark}
	\label{rem:bound_impossible}
It is paramount to work with \emph{magnetic derivatives} $\tildel_1, \tildel_2$ in Theorem~\ref{thm:magnetic_Bernstein}, and not with \emph{ordinary derivatives} $\partial_1, \partial_2$.
Indeed, derivatives of $f \in \operatorname{Ran} \mathbf{1}_{(- \infty, E]}(H_B)$ will \emph{not} be uniformly bounded in $L^2(\RR^2)$ for fixed $E$ and $B$. 
We illustrate this with the following example: 
Let $0 < B \leq E$ and consider, for $y \in \mathbb R^2$, the eigenfunction to the eigenvalue $B$ 
\begin{equation}
	\label{eq:definition_f_y}
	f_y(x) 
	:= 
	\exp
	\left( 
		- \frac{B}{4} \lvert x-y \rvert^2 
		- i \frac{B}{2} (x_1y_2 - x_2y_1) 
	\right)
	\in
	\operatorname{Ran} \mathbf{1}_{(- \infty,E]}(H_B).
\end{equation}
Clearly, $\lVert f_y \rVert_{L^2(\RR^2)}^2 = \frac{2 \pi }{B}$ is independent of $y$. 
However,
\begin{align*}
    \lVert \partial_1 f_y \rVert_{L^2(\mathbb R^2)}^2
    =&
    \frac{B}{2}
	\int_{\RR^2}
	\lvert
	( - (x_1 - y_1) - i y_2)
	f_y(x)
	\rvert^2
	\drm x
	\geq
	\frac{B}{2}
	\int_{\RR^2}
	\left(	
		\lvert y_2 \rvert^2
		-
		\lvert x_1 - y_1 \rvert^2
	\right)
	\lvert
	f_y(x)
	\rvert^2
	\drm x
	\\
	&=
	\frac{\pi \lvert y_2 \rvert^2}{2}
	-
	\frac{B}{2}
	\int_{\RR^2}
	x_1^2	
	\exp
	\left(
		- \frac{B \lvert x \rvert^2}{4}
	\right)
	\drm x
	=
	\frac{\pi \lvert y_2 \rvert^2}{2} - \frac{4 \pi}{B}.
\end{align*}
This can be made arbitrarily large by choosing $\lvert y_2 \rvert$ sufficiently large. 
Consequently, Theorem~\ref{thm:magnetic_Bernstein} cannot hold verbatim when replacing $\tildel_1, \tildel_2$ by $\partial_1, \partial_2$.
\end{remark}

\begin{remark} 
	\label{rem:large_ell_optimal}
Choosing $y = 0$ in~\eqref{eq:definition_f_y}, the function $f_0$ also demonstrates that for fixed $E, \rho > 0$ the constant
\[
	\left(
	\frac{C_1}{\rho}
	\right)^{C_2 + C_3 \lvert \ell \rvert_1 \sqrt{E} + C_4 (\lvert \ell \rvert_1^2 B )}
	\sim
	\tilde C_1
	\exp
	\left(
		\tilde C_2 + \tilde C_3 \lvert \ell \rvert_1 \sqrt{E} + \tilde C_4 (\lvert \ell \rvert_1^2 B )
	\right)
\]
in Theorem~\ref{thm:main} has the optimal behavior as $\lvert \ell \rvert_1^2$ tends to $\infty$.
Indeed, let $\ell = (\ell_1, \ell_2) \in (0, \infty)^2$ and consider the $(\ell, \rho)$-thick set 
\[	
	S 
	:= 
	\{
	x \in \RR^2 \colon \lvert x \rvert \geq r
	\}
	\quad
	\text{where $r:=\max( \ell_1,\ell_2) (1-\rho)/2$}.
\] 
Then,
\begin{align*}
	\lVert f_0 \rVert_{L^2(S)}^2
	&= 
	\int_r^{\infty} 2 \pi s  \exp(-B s^2/2) \mathrm d s 
	= 
	\frac {2 \pi} {B} \exp(- B r^2/2)
	\\
	&=
	\exp
	\left(
		\frac{- B \max(\ell_1 ,\ell_2)^2 (1 - \rho)^2}{8}
	\right)
	\lVert f_0 \rVert_{L^2(\RR^d)}^2
	\le
	\exp
	\left(
		\frac{- B  \lvert \ell \rvert_1^2 (1 - \rho)^2}{2}
	\right)
	\lVert f_0 \rVert_{L^2(\RR^d)}^2 \, .
\end{align*}
Thus, the constant in Theorem~\ref{thm:main} must at least be of order $\exp(C B \lvert \ell \rvert_1^2)$ as $\lvert \ell \rvert_1 \to \infty$. 
\end{remark}

We can furthermore use the functions $f_y$, defined in~\eqref{eq:definition_f_y}, to show that thickness is necessary for any quantitative unique continuation principle on spectral subspaces.

\begin{theorem}
	\label{thm:thickness_necessary_for_UCP}
	Assume that $S \subset \RR^2$ is such that for some $B > 0$, $E \geq B$, there is a constant $C > 0$ such that 
	\begin{equation}
	\label{eq:abstract_UCP}
	\lVert f \rVert_{L^2(\RR^2)}^2
	\leq
	C
	\lVert f \rVert_{L^2(S)}^2
	\quad
	\text{for all $f \in \ran \mathbf{1}_{(-\infty, E]}(H_B)$}.
	\end{equation}
	Then $S$ is thick.
\end{theorem}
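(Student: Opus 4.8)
The plan is to prove the contrapositive: if $S$ is not thick, then~\eqref{eq:abstract_UCP} fails for every $B>0$ and $E \geq B$. The natural test functions are the coherent states $f_y$ from~\eqref{eq:definition_f_y}, which are all eigenfunctions at the lowest Landau level $B \leq E$, hence lie in $\ran \mathbf{1}_{(-\infty,E]}(H_B)$, and which have $y$-independent full norm $\lVert f_y \rVert_{L^2(\RR^2)}^2 = 2\pi/B$. The density $\lvert f_y(x) \rvert^2 = \exp(-\tfrac{B}{2}\lvert x - y\rvert^2)$ is a Gaussian concentrated at $y$ with a fixed length scale $\sim B^{-1/2}$. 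So the idea is: if $S$ has arbitrarily large ``holes'', we can center $f_y$ deep inside such a hole, and then $\lVert f_y \rVert_{L^2(S)}^2$ becomes an arbitrarily small fraction of $\lVert f_y \rVert_{L^2(\RR^2)}^2$, contradicting any fixed constant $C$.

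The first step is to unwind what ``not thick'' means. Negating Definition~\ref{def:thick}: for every $\ell \in (0,\infty)^2$ and every $\rho \in (0,1]$ there is an axis-parallel rectangle $Q$ with side lengths $\ell$ such that $\vol(S \cap Q) < \rho \vol(Q)$. I would specialize to squares, taking $\ell = (n,n)$ and $\rho = 1/n$ (say), to extract a sequence of squares $Q_n$ of side length $n$ with $\vol(S \cap Q_n) \leq \vol(Q_n)/n = n$. Let $y_n$ be the center of $Q_n$. The second step is the quantitative estimate: split $\lVert f_{y_n} \rVert_{L^2(S)}^2 = \lVert f_{y_n}\rVert_{L^2(S \cap Q_n)}^2 + \lVert f_{y_n}\rVert_{L^2(S \setminus Q_n)}^2$. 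For the first term, bound the integrand by its maximum value $1$ (attained at $x = y_n$), giving $\lVert f_{y_n}\rVert_{L^2(S\cap Q_n)}^2 \leq \vol(S \cap Q_n) \leq n$ — wait, this does not go to zero, so a cruder bound is not enough; instead I should use that $Q_n$ has side $n$ but the relevant scale of $f_{y_n}$ is order $B^{-1/2}$, so one must exploit that $S \cap Q_n$ is small relative to the Gaussian weight near the center. A cleaner route: since $\vol(S \cap Q_n) \leq n$ while $Q_n$ itself has volume $n^2$, the set $S \cap Q_n$ can occupy at most an $n^{-1}$-fraction; but the Gaussian is overwhelmingly supported in a bounded neighbourhood of $y_n$, so I should instead just bound $\lVert f_{y_n}\rVert_{L^2(S)}^2 \leq \lVert f_{y_n}\rVert_{L^2(Q_n^c)}^2 + \lVert \mathbf{1}_{S\cap Q_n} f_{y_n}\rVert_{L^2}^2$ and observe the last term, an integral of a fixed-width Gaussian over a set of measure $\leq n$ that sits inside $Q_n$, is at most $\vol(S\cap Q_n) \cdot 1$... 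This still fails.

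The resolution — and the one genuinely delicate point — is that bounding by the measure is too lossy; one must instead make the holes large in a way tailored to $B$, or use a doubly-indexed extraction. Concretely, I would argue: fix $B$ and let $\varepsilon > 0$; choose $R$ so large that $\lVert f_0 \rVert_{L^2(\{\lvert x \rvert > R\})}^2 < \tfrac{\varepsilon}{2}\lVert f_0\rVert_{L^2(\RR^2)}^2$ (possible by Gaussian decay, with $R$ depending only on $B,\varepsilon$). Since $S$ is not thick, applying the negation with side lengths $\ell = (2R, 2R)$ and $\rho$ chosen so small that $\rho \cdot (2R)^2 < \tfrac{\varepsilon}{2}\lVert f_0\rVert_{L^2(\RR^2)}^2$, we obtain a square $Q$ of side $2R$ with $\vol(S \cap Q) < \tfrac{\varepsilon}{2}\lVert f_0\rVert_{L^2(\RR^2)}^2$. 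Translating so $Q$ is centered at $y$, magnetic translation invariance (or direct computation, exactly as in Remark~\ref{rem:large_ell_optimal}) gives
\[
\lVert f_y \rVert_{L^2(S)}^2
\leq
\lVert f_y \rVert_{L^2(Q \setminus S)^c \text{-type split}}
\leq
\vol(S \cap Q) \cdot \sup \lvert f_y\rvert^2
+
\lVert f_y\rVert_{L^2(\{\lvert x - y\rvert > R\})}^2
<
\frac{\varepsilon}{2}\lVert f_0\rVert_{L^2}^2 + \frac{\varepsilon}{2}\lVert f_0\rVert_{L^2}^2
=
\varepsilon \lVert f_y \rVert_{L^2(\RR^2)}^2,
\]
using $\sup_x \lvert f_y(x)\rvert^2 = 1$ and that $\lVert f_y\rVert_{L^2(\RR^2)} = \lVert f_0\rVert_{L^2(\RR^2)}$. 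Since $\varepsilon > 0$ is arbitrary, no finite $C$ can satisfy~\eqref{eq:abstract_UCP}, so $S$ must be thick.

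The main obstacle, as the false starts above show, is organizing the two competing scales correctly: one must first fix the Gaussian tail cutoff $R$ (depending on $B$ and the target $\varepsilon$), and only then invoke non-thickness with side length tied to that $R$ and a sufficiently small density $\rho$, so that the part of the mass landing in $S$ is controlled by $\vol(S\cap Q)$ times the trivial sup bound $1$. A secondary technical point worth a line is justifying that $f_y \in \ran \mathbf{1}_{(-\infty,E]}(H_B)$ — immediate since $H_B f_y = B f_y$ and $B \leq E$ — and that translating $S$ to center $Q$ at the origin is harmless because $S$ is a fixed set and we are free to choose the center $y$ of the coherent state; equivalently, magnetic translations act unitarily on $\ran\mathbf{1}_{(-\infty,E]}(H_B)$ and map $f_0$ to (a phase times) $f_y$. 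With these in place the estimate is a two-term triangle-inequality split and the conclusion follows.
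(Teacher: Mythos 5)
Your final argument is correct and is essentially the paper's proof: both test \eqref{eq:abstract_UCP} with the lowest-Landau-level coherent states $f_y$ centered in a large ``hole'' of $S$, and both split $\lVert f_y\rVert_{L^2(S)}^2$ into a sup-bound ($\lvert f_y\rvert\leq 1$) times the small measure $\vol(S\cap Q)$ plus a Gaussian tail outside the hole. The only difference is bookkeeping — you fix the tail radius $R$ from $\varepsilon$ and then shrink $\rho$, while the paper extracts a sequence of balls $B_n(y^{(n)})$ with $\vol(B_n(y^{(n)})\cap S)\leq 1/n$ and lets $n\to\infty$ — and your own false starts are explicitly resolved by this correct two-scale organization.
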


\begin{proof}
	If $S \subset \RR^2$ was not thick, there would be $\{ y^{(n)} \}_{n \in \NN} \subset \RR^2$ with
	\[
	\vol ( B_n(y^{(n)}) \cap S) \leq \frac{1}{n}
	\quad
	\text{for all $n \in \NN$}
	\]
	where $B_r(x)$ denotes the open ball of radius $r > 0$ around $x \in \RR^2$.
	Defining $f_{y^{(n)}}$ as in~\eqref{eq:definition_f_y}, we have $f_{y^{(n)}} \in \ran \mathbf{1}_{( - \infty, E]}(H_B)$ with $\lVert f_{y^{(n)}} \rVert_{L^2(\RR^2)}^2 = \frac{2 \pi}{B}$, but
	\begin{align*}
	\lVert f_{y^{(n)}} \rVert_{L^2(S)}^2
	&\leq
	\lVert f_{y^{(n)}} \rVert_{L^\infty(\RR^2)}^2
	\cdot
	\vol ( B_n(y^{(n)}) \cap S)
	+
	\int_{\lvert x - y_n \rvert \geq n}
	\exp
	\left( - \frac{B}{2} \lvert x - y^{(n)} \rvert \right)
	\drm x
	\\	
	&\leq
	\frac{1}{n}
	+
	\frac{1}{B} \exp \left( - \frac{B n^2}{2} \right).
	\end{align*}
	This tends to $0$ as $n \to \infty$, so~\eqref{eq:abstract_UCP} cannot hold. 
\end{proof}

In the classic strategy of proof of the Kovrijkine-Logvinenko-Sereda theorems, we would now like to bound the $L^2(\RR^2)$-norm of higher order \emph{ordinary derivatives} $\partial_1, \partial_2$ of $f \in \operatorname{Ran} \mathbf{1}_{(- \infty, E]} (H_B)$ and use this to infer that $f$ is analytic.
Unfortunately, in light of Remark~\ref{rem:bound_impossible}, this is impossible.
However, a closer look shows that this lack of a uniform bound is due to an oscillating phase factor for large $\lvert x \rvert$.
This suggests that, instead of proving $L^2(\RR^d)$-bounds on (derivatives of) $f$, we might be better off proving $L^1(\RR^2)$-bounds on derivatives of $\lvert f \rvert^2$.
For this, we need some notation. 
For a finite sequence $\alpha = (\alpha_1, \dots, \alpha_m) \in \{1, 2 \} ^m$, let
\begin{align*}
	\partial^\alpha :=& \partial_{\alpha_1} \partial_{\alpha_2} \dots \partial_{\alpha_m}, \\
	\tildel^\alpha := &\tildel_{\alpha_1} \tildel_{\alpha_2} \dots \tildel_{\alpha_m}.
\end{align*}
Furthermore, we write $\beta \le \alpha$, if $\beta$ is a subsequence of $\alpha$ and write $\alpha \setminus \beta$ for the complementary subsequence.
We can now formulate our next theorem which are Bernstein-type inequalities (with ordinary derivatives) on $\lvert f \rvert^2$ where $f \in \operatorname{Ran} \mathbf{1}_{(- \infty, E]}(H_B)$:

\begin{theorem}
	\label{thm:magnetic_Bernstein_2}
For every $E, B \geq 0$, $m \in \NN$, and $f \in \operatorname{Ran} \mathbf{1}_{(- \infty, E]} (H_B)$ we have 
	\begin{equation}
	\label{eq:magnetic_Bernstein_1}
	\sum_{\alpha \in \{1,2\}^m}
	\lVert 
		\partial^\alpha  \lvert f \rvert^2
 	\rVert_{L^1(\RR^2)}
	\leq
	C'_B(m)
	\lVert f  \rVert_{L^2(\RR^2)}^2 
	\quad
	\text{where}
	\quad
	C'_B(m)
	=
	2^{3m/2}(E + B m)^{m/2}.
	\end{equation}
	Furthermore, for all $\alpha \in \{0,1\}^m$
	\begin{equation}
	\label{eq:magnetic_Bernstein_2}
	\sum_{\alpha \in \{1,2\}^m}
	\lVert 
		\partial^\alpha 
		\lvert
		f
		\rvert^2
	\rVert_{L^\infty(\RR^2)}
	\leq
	\Csob
	\sum_{m' = m}^{m + 3}
	C'_B(m)
	\lVert f  \rVert_{L^2(\RR^2)}^2 
	\end{equation}
	where $\Csob > 0$ is a universal constant.
\end{theorem}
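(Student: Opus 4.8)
The plan is to convert ordinary derivatives of the density $\lvert f \rvert^2$ into magnetic derivatives of $f$ and then feed this into Theorem~\ref{thm:magnetic_Bernstein}. The reason this works -- in contrast to Remark~\ref{rem:bound_impossible} -- is a cancellation of the magnetic potential. Writing $\partial_j = -i(\tildel_j + A_j)$, where $A_j$ is the (real-valued) $j$-th component of the magnetic potential acting as a multiplication operator, the Leibniz rule gives, for any $f \in C^\infty(\RR^2)$ and any finite sequences $\beta, \gamma$,
\[
\partial_j \bigl( (\tildel^\beta f)\, \overline{\tildel^\gamma f} \bigr) = -\,i\, (\tildel_j \tildel^\beta f)\, \overline{\tildel^\gamma f} + i\, (\tildel^\beta f)\, \overline{\tildel_j \tildel^\gamma f},
\]
the two terms containing $A_j$ cancelling precisely because $A_j$ is real-valued (the special case $\beta = \gamma$ both empty being $\partial_j \lvert f \rvert^2 = 2\,\mathrm{Im}\,\bigl((\tildel_j f)\overline f\bigr)$).

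Iterating this identity along $\partial^\alpha = \partial_{\alpha_1} \cdots \partial_{\alpha_m}$ expresses $\partial^\alpha \lvert f \rvert^2$ as a sum of $2^m$ terms, indexed by the assignments of each of the $m$ differentiations to one of the two factors; each term has the form $c\,(\tildel^{\sigma} f)\,\overline{\tildel^{\tau} f}$ with $\lvert c \rvert = 1$, where $(\sigma, \tau)$ is the induced splitting of the positions of $\alpha$ into two subsequences. Hence $\bigl\lvert \partial^\alpha \lvert f \rvert^2 \bigr\rvert \le \sum_{\sigma, \tau} \lvert \tildel^{\sigma} f \rvert\, \lvert \tildel^{\tau} f \rvert$. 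I would integrate, apply the Cauchy--Schwarz inequality to each product, sum over $\alpha \in \{1,2\}^m$, and then reorganize the resulting double sum by the length $p$ of $\sigma$: there are $\binom{m}{p}$ position splits with $\lvert \sigma \rvert = p$, and for each of them $\sigma$ ranges independently over $\{1,2\}^p$ and $\tau$ over $\{1,2\}^{m-p}$, so
\[
\sum_{\alpha \in \{1,2\}^m} \bigl\lVert \partial^\alpha \lvert f \rvert^2 \bigr\rVert_{L^1(\RR^2)} \le \sum_{p=0}^{m} \binom{m}{p} \Bigl( \sum_{\sigma \in \{1,2\}^p} \lVert \tildel^{\sigma} f \rVert_{L^2(\RR^2)} \Bigr) \Bigl( \sum_{\tau \in \{1,2\}^{m-p}} \lVert \tildel^{\tau} f \rVert_{L^2(\RR^2)} \Bigr).
\]
A further Cauchy--Schwarz in the summation variable together with Theorem~\ref{thm:magnetic_Bernstein} bounds the two factors by $2^{p/2}(E + Bp)^{p/2} \lVert f \rVert_{L^2(\RR^2)}$ and $2^{(m-p)/2}(E + B(m-p))^{(m-p)/2}\lVert f \rVert_{L^2(\RR^2)}$; using $(E+Bp)^{p/2}(E+B(m-p))^{(m-p)/2} \le (E + Bm)^{m/2}$ and $\sum_{p=0}^m \binom mp = 2^m$ yields precisely $C'_B(m) = 2^{3m/2}(E+Bm)^{m/2}$, which is~\eqref{eq:magnetic_Bernstein_1}.

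For~\eqref{eq:magnetic_Bernstein_2} I would use the two-dimensional Sobolev embedding $W^{3,1}(\RR^2) \hookrightarrow L^\infty(\RR^2)$: if $u \in W^{3,1}(\RR^2)$, then $u$ together with its derivatives of order at most two lie in $W^{1,1}(\RR^2) \hookrightarrow L^2(\RR^2)$, so $u \in H^2(\RR^2) \hookrightarrow L^\infty(\RR^2)$, and one obtains $\lVert u \rVert_{L^\infty(\RR^2)} \le \Csob \sum_{j=0}^{3} \sum_{\gamma \in \{1,2\}^j} \lVert \partial^\gamma u \rVert_{L^1(\RR^2)}$ with a universal constant $\Csob$. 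Applying this to $u = \partial^\alpha \lvert f \rvert^2$ (which lies in $W^{3,1}(\RR^2)$ by~\eqref{eq:magnetic_Bernstein_1}), noting that $\partial^\gamma \partial^\alpha \lvert f \rvert^2 = \partial^{(\gamma,\alpha)} \lvert f \rvert^2$ with $(\gamma,\alpha) \in \{1,2\}^{m+j}$, and summing over $\alpha \in \{1,2\}^m$ -- so that $(\gamma,\alpha)$ sweeps all of $\{1,2\}^{m+j}$ -- one gets $\sum_{\alpha \in \{1,2\}^m} \lVert \partial^\alpha \lvert f \rvert^2 \rVert_{L^\infty(\RR^2)} \le \Csob \sum_{j=0}^{3} C'_B(m+j) \lVert f \rVert_{L^2(\RR^2)}^2$, which is~\eqref{eq:magnetic_Bernstein_2}.

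The genuinely new point is the cancellation in the first display: differentiating the density, rather than $f$ itself, produces only magnetic derivatives and no leftover magnetic potential, so that Theorem~\ref{thm:magnetic_Bernstein} becomes applicable. Everything else is bookkeeping; the step demanding the most attention is the combinatorial reorganization of $\sum_{\alpha}\sum_{\sigma,\tau}$ into the binomial sum, together with the verification that each of the $2^m$ monomials carries a coefficient of modulus one. No separate approximation argument is needed, as finiteness of all norms appearing is automatic once~\eqref{eq:magnetic_Bernstein_1} is established and $f \in C^\infty(\RR^2)$.
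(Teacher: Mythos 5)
Your proposal is correct and follows essentially the same route as the paper: the same cancellation identity expressing $\partial^\alpha \lvert f \rvert^2$ as a sum of $2^m$ unimodular multiples of $(\tildel^{\sigma} f)\,\overline{\tildel^{\tau} f}$, the same Cauchy--Schwarz and binomial reorganization leading to $2^{3m/2}(E+Bm)^{m/2}$, and the same $W^{3,1}(\RR^2) \hookrightarrow L^\infty(\RR^2)$ embedding for~\eqref{eq:magnetic_Bernstein_2}. (Only your sign convention $\partial_j = -i(\tildel_j + A_j)$ differs trivially from the paper's gauge, which is immaterial since the cancellation only uses that $A_j$ is real.)
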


As the notation suggests, $\Csob$ comes from a Sobolev embedding.

\begin{proof}
Let $u,v \in \mathcal C^\infty (\mathbb R^2, \mathbb C) $ and $x \in \mathbb R^2$. We have
	\begin{align*}
		 i\partial_1 (u \bar v) (x)&= \bar v (x) \left(\left(i \partial_1 - \frac B 2 x_2 \right) u\right) (x) - u (x)  \overline{\left(\left(i \partial_1 - \frac B 2 x_2 \right) v\right)} (x) \\
		 &= \bar v(x) \tildel_1 u (x)  - u(x) \overline{ \tildel_1v}(x) .
	 \end{align*}
Analogously,
	\[i \partial_2 (u \bar v)(x)=\bar v(x) \tildel_2 (x)  - u(x) \overline{\tildel_2v}(x) .\]
By induction, for any $ \alpha \in \{1,2\}^n$, this leads to
	\[i^m \partial^\alpha \lvert u \rvert ^2 (x) = \sum_{\beta \le \alpha } (-1)^{m- \lvert \beta \rvert}  \tildel^\beta u(x)  \overline{  \tildel^{\alpha \setminus \beta} u(x)} .\]
Thus, we can estimate
	\begin{align*}
		\sum_{\lvert \alpha \rvert =m} \lVert \partial^\alpha \lvert f \rvert^2 \rVert_{L^1(\RR^2)} 
		\le & \sum_{\lvert \alpha \rvert=m} \sum_{\beta \le \alpha} \lVert \tildel^\beta f \rVert_{L^2(\RR^2)} \lVert \tildel^{\alpha\setminus \beta}  f  \rVert_{L^2(\RR^2)} \\
		=& \sum_{k=0} ^m \binom{m}{k} \sum_{\lvert \beta \rvert=k , \lvert \beta' \rvert=m-k}  
		\lVert \tildel^\beta  f  \rVert_{L^2(\RR^2)} 
		\lVert \tildel^{\beta'}  f  \rVert_{L^2(\RR^2)} 	\\
		\le & \sum_{k=0} ^m \binom{m}{k} 2^{m/2}  
		\sqrt{ \sum_{\lvert \beta \rvert=k , \lvert \beta' \rvert=m-k}  \lVert \tildel^\beta  f \rVert_{L^2(\RR^2)}^2 \lVert \tildel^{\beta'}f \rVert_{L^2(\RR^2)} ^2 } \\
		\le &  \sum_{k=0} ^m \binom{m}{k} 2^{m/2} \sqrt{ C_B(k) C_B(m-k)} \lVert f \rVert_{L^2(\RR^2)}^2  \\
		\le & \sum_{k=0} ^m \binom{m}{k} 2^{m/2} (E + Bm)^{m/2} \lVert f \rVert_{L^2(\RR^2)}^2= 2^{3m/2} (E + Bm)^{m/2} \lVert f \rVert_{L^2(\RR^2)}^2.
	\end{align*}
	Estimate~\eqref{eq:magnetic_Bernstein_2} follows from~\eqref{eq:magnetic_Bernstein_1} by using the Sobolev estimate $\lVert g \rVert_{L^\infty(\RR^2)} \leq \Csob \lVert g \rVert_{W^{3,1}(\RR^2)}$ which leads to
	\begin{align*}
	\sum_{\alpha \in \{1,2\}^m}
	\lVert \partial^\alpha \lvert f \rvert^2 \rVert_{L^\infty(\RR^2)}
	&\leq
	\Csob
	\sum_{\alpha \in \{1,2\}^m}
	\lVert \partial^\alpha \lvert f \rvert^2 \rVert_{W^{3,1}(\RR^2)}
	=
	\Csob
	\sum_{\alpha \in \{1,2\}^m}
	\sum_{\lvert \beta \rvert \leq 3}
	\lVert \partial^\beta \partial^\alpha \lvert f \rvert^2 \rVert_{L^1(\RR^2)}
	\\
	&=
	\Csob
	\sum_{m' = m}^{m+3}
	\sum_{\lvert \alpha' \rvert = m'}
	\lVert \partial^{\alpha'} \lvert f \rvert^2 \rVert_{L^1(\RR^2)}
	\leq
	\Csob
	\sum_{m' = m}^{m+3}
	C_B(m')	
	\lVert f \rVert_{L^2(\RR^2)}^2. 
	\qedhere
	\end{align*} 
	
\end{proof}

Let us emphasize that the the constant $\Csob$ comes from a Sobolev estimate in $\RR^2$.
When proving the analogous result on \emph{bounded} domains $\Lambda_L$ in Section~\ref{sec:finite_volume}, it is therefore desirable to work with restrictions onto \emph{one domain} (or shifted variants thereof).
This will be achieved by possibly extending functions beyond their original domain -- using the magnetic boundary conditions defined in Section~\ref{sec:finite_volume}.

\section{Spectral inequality for the Landau operator}
	\label{sec:proofs}
In this section, we prove Theorem~\ref{thm:main}.
The strategy of proof roughly follows Kovrijkine's proof~\cite{Kovrijkine-00} (for the pure Laplacian) and the more general argument in~\cite{EgidiS-21}, the latter being however formulated in an $L^2(\RR^d)$ setting instead of the $L^1(\RR^d)$ setting used here. 

\subsection{Analyticity and local estimate}

	\begin{lemma}
	\label{lem:analytic}
	Let $E \geq 0$, $f \in \ran \mathbf{1}_{(- \infty, E]}(H_B)$.	Then $\lvert f \rvert^2$ is analytic, i.e. it can be expanded in an absolutely convergent power series around every $x_0 \in \RR^2$.
	In particular, it has an analytic extension $\Phi$ to $\CC^2$.
	\end{lemma}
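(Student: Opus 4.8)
The plan is to use the Bernstein-type inequalities in $L^\infty$-norm from Theorem~\ref{thm:magnetic_Bernstein_2} to control the Taylor coefficients of $g := \lvert f \rvert^2$ at an arbitrary base point $x_0 \in \RR^2$, and then to bound the remainder of the Taylor expansion to show the power series converges to $g$ on a fixed-radius polydisc. First I would fix $f \in \ran \mathbf{1}_{(-\infty,E]}(H_B)$ and recall from the discussion after the proof of Theorem~\ref{thm:magnetic_Bernstein} that $f \in C^\infty(\RR^2)$, hence $g \in C^\infty(\RR^2)$ and all partial derivatives of $g$ make sense pointwise. For a multi-index $\mathfrak n \in \NN_0^2$ with $\lvert \mathfrak n \rvert = m$, the derivative $\partial^{\mathfrak n} g$ equals $\partial^\alpha g$ for any $\alpha \in \{1,2\}^m$ with the appropriate number of $1$'s and $2$'s (ordinary derivatives commute), so Theorem~\ref{thm:magnetic_Bernstein_2}, estimate~\eqref{eq:magnetic_Bernstein_2}, gives
\[
	\lvert \partial^{\mathfrak n} g (x) \rvert
	\leq
	\sum_{\alpha \in \{1,2\}^m}
	\lVert \partial^\alpha g \rVert_{L^\infty(\RR^2)}
	\leq
	\Csob
	\sum_{m' = m}^{m+3}
	C'_B(m')
	\lVert f \rVert_{L^2(\RR^2)}^2
\]
uniformly in $x$. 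Using $C'_B(m') = 2^{3m'/2}(E + Bm')^{m'/2}$ and crude monotonicity bounds, the right-hand side is bounded by $A \, (c\sqrt{E+Bm})^m \lVert f\rVert_{L^2}^2$ for suitable constants $A, c$ (absorbing the four extra derivatives), and since $\sqrt{E+Bm} \le \sqrt{E} + \sqrt{Bm} \le \sqrt E + \sqrt B \sqrt m$, one gets $\lvert \partial^{\mathfrak n} g(x)\rvert \le A \, (c'(\sqrt E + \sqrt B))^m \, m^{m/2} \lVert f\rVert_{L^2}^2$. Combined with $m^{m/2} \le C^m m!$ (Stirling), this yields a bound of the form $\lvert \partial^{\mathfrak n} g(x)\rvert \le A\, \kappa^{m}\, m! \,\lVert f\rVert_{L^2}^2$ with $\kappa$ depending only on $E$ and $B$, which is exactly the kind of Gevrey-$1$ (i.e. real-analytic) estimate one needs.

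Next I would write the formal Taylor series of $g$ at $x_0$ and estimate the Lagrange/integral remainder $R_N(x) = \sum_{\lvert \mathfrak n \rvert = N} \frac{N}{\mathfrak n!}\int_0^1 (1-t)^{N-1}\partial^{\mathfrak n} g(x_0 + t(x-x_0))\,(x-x_0)^{\mathfrak n}\,dt$. Using the uniform bound above together with $\sum_{\lvert \mathfrak n\rvert = N}\frac{N!}{\mathfrak n!}\lvert x-x_0\rvert_1^{\,|\mathfrak n|}$-type multinomial counting, one finds $\lvert R_N(x)\rvert \le A\,\lVert f\rVert_{L^2}^2\,(2\kappa\,\lvert x - x_0\rvert_1)^N \to 0$ as $N \to \infty$ whenever $\lvert x - x_0\rvert_1 < 1/(2\kappa)$. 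Hence the Taylor series converges absolutely to $g$ on a ball of radius $r_0 := 1/(4\kappa)$ around $x_0$, with a radius independent of $x_0$; this is the claimed local real-analyticity. For the analytic extension $\Phi$ to $\CC^2$, I would observe that the same coefficient bounds show the power series $\sum_{\mathfrak n} \frac{\partial^{\mathfrak n} g(x_0)}{\mathfrak n!}(z - x_0)^{\mathfrak n}$ converges absolutely for $z \in \CC^2$ with $\lvert z - x_0 \rvert$ small, defining a holomorphic function near each real $x_0$; since $\lvert f\rvert^2 = g$ is already given as an entire-type object (indeed $f$ itself, being a finite linear combination of Landau eigenfunctions whose kernels $K_{E,B}$ are Gaussians times polynomials, extends to an entire function of $x \in \CC^2$, so $g(x) = f(x)\overline{f(\bar x)}$ does too), these local pieces patch to a single entire $\Phi$ on $\CC^2$. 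One can either invoke this explicit global structure directly, or use the uniformity of $r_0$ to analytically continue along any path and conclude $\Phi$ is entire.

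The main obstacle I anticipate is purely bookkeeping: converting the sum $\sum_{m'=m}^{m+3} C'_B(m')$ and the $m^{m/2}$ growth into a clean $\kappa^m m!$ bound with constants depending only on $E, B$ (and tracking the two-dimensional multinomial factors correctly in the remainder estimate), rather than anything conceptually delicate — the Bernstein inequality~\eqref{eq:magnetic_Bernstein_2} already does all the heavy lifting. A secondary point that needs a line of care is justifying that $\partial^{\mathfrak n} g$ really is controlled pointwise and not just in $L^\infty$ up to null sets, which follows from $g \in C^\infty$. I would keep the statement about the extension $\Phi$ lightweight, since for the later arguments only the uniform radius of convergence $r_0 = r_0(E,B)$ and the coefficient bounds matter.
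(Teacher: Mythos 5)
Your first step coincides with the paper's: both proofs feed the $L^\infty$ Bernstein bound~\eqref{eq:magnetic_Bernstein_2} into the Taylor series of $\lvert f\rvert^2$ at an arbitrary $x_0$. The genuine gap is in how you convert the growth of $C_B'(m')$ into a coefficient bound. By estimating $m^{m/2}\le C^m\, m!$ you throw away a square root that is exactly the point of the lemma: since $m!\ge (m/\euler)^m$, one has $(E+Bm)^{m/2}\le C(E,B)^m\sqrt{m!}$, so the Taylor coefficients $\partial^\alpha\lvert f\rvert^2(x_0)/m!$ are bounded by $C^m/\sqrt{m!}$ and the power series converges absolutely for \emph{every} $z\in\CC^2$; this is how the paper obtains the entire extension $\Phi$. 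Your Gevrey-$1$ bound $\kappa^m m!$ only yields analyticity on a tube of width $r_0=1/(4\kappa)$, $\kappa\sim\sqrt{E}+\sqrt{B}$, around $\RR^2$, which neither proves the stated claim (an analytic extension to all of $\CC^2$) nor suffices later: Lemma~\ref{lem:local_estimate} and the proof of Theorem~\ref{thm:main} evaluate the extension on polydiscs of radii $4\ell_1,4\ell_2$ resp.\ $5\ell_1,5\ell_2$, where $\ell$ is fixed by the thick set and is in general much larger than $r_0(E,B)$ (e.g.\ for large $E$). So your closing remark that ``only the uniform radius $r_0(E,B)$ matters for the later arguments'' is not correct.

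Your fallback arguments for entirety also do not hold up as written. Uniform lower bound on the radius of convergence at real points does not allow continuation to an entire function -- $x\mapsto 1/(1+x^2)$ has radius of convergence at least $1$ at every real point but no entire extension -- so ``continue along any path'' is not a valid route. And $f$ is not a finite linear combination of explicit eigenfunctions: the Landau levels are infinitely degenerate, and $f$ is only an integral of the projection kernel $K_{E,B}(\cdot,y)$ against $f(y)$; one could indeed prove entirety of $f$ (and then of $f(z)\overline{f(\bar z)}$) from the explicit Gaussian kernel, but that argument is not carried out in your proposal. The cleanest repair is simply to keep the square root: bound $(E+B(m+3))^{(m+3)/2}\le C^m\sqrt{m!}$ and conclude directly, as the paper does, that the series defining $\Phi$ converges absolutely and locally uniformly on $\CC^2$.
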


	\begin{proof}	
	Let $m \in \NN$.
	By~\eqref{eq:magnetic_Bernstein_2}, we have
	\begin{align*}
	\sum_{\alpha \in \{1,2\}^m}
	\lVert \partial^\alpha \lvert f \rvert^2 \rVert_{L^\infty(\RR^2)}
	&\leq
	\Csob \sum_{m' = m}^{m + 3}
	2^{3m'/2}
	(E + B m')^{m/2}
	\lVert f \rVert_{L^2(\RR^2)}^2 
	\\
	&\leq
	4
	\Csob
	2^{3 (m+3)/2}
	(E + B (m + 3))^{\frac{m+3}{2}}
	\lVert f \rVert_{L^2(\RR^2)}^2.
	\end{align*} 
	Using
	\[
	(E + B (m+3))^{\frac{m+3}{2}}
	\lesssim
	\sqrt{m!}
	\]
	we see that for every point $x_0 \in \RR^2$, the series	
	\[
	\Phi(z)
	:=
	\sum_{k = 0}^\infty
	\sum_{\lvert \alpha \rvert = k}
	\frac{\partial^\alpha \lvert f \rvert^2}{k!}
	(x - x_0)^\alpha,
	\]
	where $(x - x_0)^\alpha$ for $\alpha = (\alpha_1, \dots, \alpha_k) \in \{1,2\}^k$, means
	\[
	(x - x_0)^\alpha
	=
	(x - x_0)_1^{\alpha_1} \cdot (x - x_0)_2^{\alpha_2},
	\]
	converges absolutely and locally uniformly, agrees with $f$ on $\RR^2$, and defines an analytic extension $\Phi$ of $f$ to $\CC^2$.
	\end{proof}
	
	Next, we need a local lower bound on such analytic functions.
	For this purpose, given $r > 0$, we denote by $D_r = \{ z \in \CC \colon \lvert z \rvert \leq r \}$ the complex disc of radius $r$.
	We also need notation for two-dimensional complex polydiscs and, given $r_1, r_2 > 0$, we denote by $D_{(r_1,r_2)} = \{ (z_1, z_2) \in \CC^2 \colon \lvert z_j \rvert \leq r_j, j = 1,2 \}$ the complex polydisc with radii $r_1$ and $r_2$.

	\begin{lemma}
		\label{lem:local_estimate}
		Let $Q \subseteq \RR^2$ be an open rectangle with sides of lengths $\ell_1, \ell_2 > 0$, parallel to the coordinate axes, and let $g \colon Q \to \CC$ be a non-vanishing function admitting an analytic continuation $G$ to $Q + D_{(4 \ell_1, 4 \ell_2)} \subseteq \CC^2$.
		Then, for any measurable $U \subseteq Q$ and every linear bijection $A \colon \RR^2 \to \RR^2$, we have
		\begin{align*}
			\lVert g \rVert_{L^1(Q \cap U)}
			&\geq
			\frac{1}{2}
			\left(
				\frac{\vol (A ( Q \cap U))}{48 \pi \diam (A(Q))^2}
			\right)^{2 \frac{\log M}{\log 2}}
			\frac{\vol ( Q \cap U )}{\vol (Q)}
			\lVert g \rVert_{L^1(Q)}
			\\
			&\geq
			\frac{1}{2}
			\left(
				\frac{\vol A( Q \cap U )}{48 \pi \diam (A(Q))^2}
			\right)^{2 \frac{\log M}{\log 2} + 1}
			\lVert g \rVert_{L^1(Q)}
			,
		\end{align*}
		where
		\[
		M
		:=
		\frac{\vol ( Q )}{\lVert g \rVert_{L^1(Q)}}
		\cdot
		\sup_{z \in Q + D_{(4 \ell_1, 4 \ell_2)}}
		\lvert
		G(z)
		\rvert
		\geq
		1.
		\]
	\end{lemma}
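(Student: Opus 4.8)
The statement is a quantitative lower bound for $\|g\|_{L^1(Q\cap U)}$ in terms of $\|g\|_{L^1(Q)}$ for a non-vanishing analytic function $g$, controlled by the ratio $M$ between the sup-norm of the analytic continuation and the $L^1$-norm. This is a two-dimensional $L^1$-variant of the Kovrijkine-type local estimates, and the natural route is to reduce everything to a one-variable statement along a well-chosen line segment and then apply a Remez- or Turán-type inequality for analytic functions of one complex variable. I would first normalize: apply the linear bijection $A$ to transport $Q$, $U$, and $g$, since both $L^1$-norms scale by $|\det A|$ and the right-hand side is written entirely in terms of $A(Q)$, $A(Q\cap U)$, so without loss of generality I may assume $A=\mathrm{Id}$ and simply write $\diam(Q)$, $\vol(Q\cap U)$, etc.

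\smallskip

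\textbf{Step 1 (Fubini / choice of a good line).} Since $\vol(Q\cap U)>0$, by Fubini there is a direction (say, parallel to one coordinate axis, or more robustly along a generic line) and a one-dimensional slice $I\subseteq Q$ of length $\ell$ such that the slice $E_0:=\{t\in I: (\text{point of the slice})\in U\}$ has one-dimensional measure at least of order $\vol(Q\cap U)/\ell$. One wants to choose this slice so that, \emph{simultaneously}, the $L^1$-mass of $g$ on the whole slice $I$ is comparable to its average, i.e.\ $\int_I |g| \gtrsim \|g\|_{L^1(Q)}/\ell$; this is arranged by a counting/averaging argument (the set of "bad" slices on which either the $U$-mass is too small or the $g$-mass is too small has small measure, so a good slice exists). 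This reduces the problem to: a one-variable analytic function $h(t)=g|_I(t)$ on an interval $I$ of length $\ell$, non-vanishing, with an analytic continuation to a disc of radius $\gtrsim 4\ell$ (this is where the hypothesis $G$ extends to $Q+D_{(4\ell_1,4\ell_2)}$ is used), satisfying $\int_I|h|\gtrsim \|g\|_{L^1(Q)}/\ell$, a sup bound on the continuation in terms of $M\|g\|_{L^1(Q)}/\vol(Q)$, and $\vol_1(E_0)\gtrsim \vol(Q\cap U)/\ell$.

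\smallskip

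\textbf{Step 2 (one-variable Remez/Turán inequality).} For a one-variable function $h$ analytic on a disc of radius comparable to the length scale and bounded there by $\mathcal M$ times its $L^1$-norm on the central interval, one has a quantitative propagation of smallness: $\|h\|_{L^1(I)}\le \bigl(C \vol_1(I)/\vol_1(E_0)\bigr)^{c\log \mathcal M}\|h\|_{L^1(E_0)}$ for explicit universal $C,c$. This is the classical Kovrijkine lemma (proved via Jensen's formula / Cartan's estimate to count zeros of the analytic continuation and then a Remez-type inequality for the resulting polynomial-like behavior), and the exponent $2\log M/\log 2$ in the statement is exactly what this produces with $\mathcal M$ replaced by $M$ (after absorbing the passage from the slice ratio to the full ratio into the constant $48\pi$ and the dimensional factors). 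I would invoke this in the form already standard in the literature.

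\smallskip

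\textbf{Step 3 (assembling).} Combining Steps 1 and 2: $\|g\|_{L^1(Q\cap U)}\ge \|h\|_{L^1(E_0)}\ge \bigl(c\,\vol_1(E_0)/\ell\bigr)^{2\log M/\log 2}\|h\|_{L^1(I)} \gtrsim \bigl(c\,\vol(Q\cap U)/\ell^2\bigr)^{2\log M/\log 2}\|g\|_{L^1(Q)}/\ell \cdot \ell$, and after tracking the relation $\diam(Q)^2\asymp \ell^2$ and collecting all absolute constants into the displayed $\tfrac12$ and $48\pi$, one gets the first displayed inequality. The second displayed inequality follows from the first purely formally: the extra factor $\vol(Q\cap U)/\vol(Q)\le 1$ can be absorbed by raising the (already $\le 1$) base to an extra power $+1$, using $\vol(Q\cap U)/\vol(Q)\ge \bigl(\vol(A(Q\cap U))/(48\pi\diam(A(Q))^2)\bigr)$ up to the same constants (since $\vol(Q)\le \diam(Q)^2$ and a constant of $48\pi\ge 1$).

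\smallskip

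\textbf{Main obstacle.} The genuinely delicate point is Step 1: arranging a single line segment that is good \emph{for all three competing quantities at once} — large $g$-mass, large $U$-intersection, and sitting at controlled distance from the boundary so that the radius-$4\ell$ analytic continuation survives — while keeping the constants clean enough to match the stated $48\pi$ and the clean exponent $2\log M/\log 2$. Everything else (the scaling reduction, the invocation of the one-variable Remez/Turán estimate, and the bookkeeping for the second inequality) is routine. I would also need to be slightly careful that $M\ge 1$, which is immediate since $\sup|G|$ over a set containing $Q$ is at least the average $\|g\|_{L^1(Q)}/\vol(Q)$.
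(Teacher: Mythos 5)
Your Step 1 contains the real gap, and it is not a technicality that can be patched by the averaging argument you describe. You want a single line segment on which \emph{both} $\int_I\lvert g\rvert$ is at least a constant fraction of the average slice mass $\lVert g\rVert_{L^1(Q)}/\ell$ \emph{and} $\vol_1(I\cap U)$ is at least a constant fraction of $\vol(Q\cap U)/\ell$, and you claim the set of slices failing either condition has small measure. That is false: a lower bound on an integral of a nonnegative function does not force the function to exceed a fraction of its average outside a small set. Concretely, if $U$ is contained in a thin horizontal strip and the $L^1$-mass of $g$ is concentrated in a different thin horizontal strip, then for axis-parallel slices almost every slice is ``bad'' for one of the two quantities and no slice is good for both; allowing generic directions does not rescue a union-bound argument, and you give no mechanism that would. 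Since the exponent $2\log M/\log 2$ in your Step 2 requires precisely that the chosen slice carry a proportional share of the $L^1$-mass of $g$ (otherwise the local normalizing constant $\mathcal{M}$ is not controlled by the global $M$), this selection problem is the crux of the lemma, not a side issue.

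The paper resolves it by a structurally different argument that you should compare against. Instead of slicing for $U$, one runs a Chebyshev argument with the sublevel set $W=\{x\in Q:\lvert g(x)\rvert\le C\lVert g\rVert_{L^1(Q)}\}$ and aims to show $\vol(W)\le\tfrac12\vol(Q\cap U)$. The line segment is chosen through a single point $y_0$ with $\lvert g(y_0)\rvert\ge\lVert g\rVert_{L^1(Q)}/\vol(Q)$ (which always exists), in a direction selected by polar-coordinate averaging around $A(y_0)$ so that the segment captures a share $\gtrsim\vol(A(W))/(2\pi\diam(A(Q))^2)$ of $W$; this averaging over directions emanating from one fixed point is legitimate, unlike averaging over parallel slices for two competing quantities. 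The one-dimensional Remez/Kovrijkine lemma is then applied with the normalization $\lvert\varphi(0)\rvert\ge 1$ at $y_0$ -- it needs only one point of largeness on the line, not an $L^1$ lower bound along the slice -- and with $E$ the preimage of $I\cap W$; this yields a lower bound on $\sup_{W}\lvert g\rvert$ which, against the defining upper bound on $W$, forces $\vol(Q\cap U)\ge 2\vol(W)$. Incidentally, your ``WLOG $A=\mathrm{Id}$'' reduction is also not clean: the analytic continuation hypothesis lives in the original coordinates, and the whole point of $A$ is that the segment is chosen in physical space while only its share of $W$ is measured after applying $A$; conjugating $g$ by $A^{-1}$ would distort the continuation domain and reintroduce the eccentricity of $Q$ into the constants. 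Your Step 3 remark deriving the second displayed inequality from the first via $\vol(A(Q))\le\pi\diam(A(Q))^2$ is fine and matches the paper.
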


	Similar statements as in Lemma~\ref{lem:local_estimate} can be found in several places in the literature.
	The original idea seems to go back to~\cite{Kovrijkine-00}.
	The proof provided here is inspired by proof of Lemma~3.5 in~\cite{EgidiS-21}, where a corresponding statement for $L^2$-norms is proved and the linear bijection $A$ was introduced.
	The latter will be used in subsequent steps of the proof of Theorem~\ref{thm:main} in order to optimize the constants. 
	Indeed, without $A$, the \emph{excentricity} of rectangles with side lengths $(\ell_1, \ell_2)$, or more precisely, the ratio between their diameter and their volume, would enter.
	The bijection helps to make the constant independent of the shape such that only the expression $\lvert \ell \rvert_1$ enters the final statement.
	  
	The proof of Lemma~\ref{lem:local_estimate} relies on a dimension reduction argument and the following one-dimensional estimate, due to~\cite{Kovrijkine-00}, which itself relies on the Remez inequality for polynomials as well as Bleschke products.

	\begin{lemma}[{Cf.~\cite[Lemma~1]{Kovrijkine-00}}]
    \label{lem:one-dimensional}
        Let $\varphi \colon D_{4 + \epsilon} \to \CC$ for some $\epsilon > 0$ be an analytic function with $\lvert \varphi(0) \rvert \geq 1$.
        Let $E \subseteq [0,1]$ be measurable with positive measure.
        Then
        \[
         \sup_{t \in [0,1]}
         \lvert \varphi(t) \rvert
         \leq
         \left(
            \frac{12}{\vol ( E )}
         \right)^{2 \frac{\log M_\varphi}{\log 2}}
         \sup_{t \in E}
         \lvert \varphi (t) \rvert
        \]
        where $M_\varphi = \sup_{z \in D_4} \lvert \varphi(z) \rvert$.
	\end{lemma}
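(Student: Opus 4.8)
The plan is to reproduce Kovrijkine's argument, built from four classical ingredients: Jensen's formula (to bound the number of zeros of $\varphi$ near the origin by $\log M_\varphi$), a Blaschke product (to split these zeros off), Harnack's inequality (to bound the remaining non-vanishing factor from below on $[0,1]$), and the Remez inequality for polynomials on sets of positive measure (to bound the Blaschke factor on $[0,1]$). The nested discs $D_1 \subseteq D_2 \subseteq D_4$ are what make the three estimates fit together.

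First I would bound the number $n$ of zeros of $\varphi$ in the open disc $D_2$, counted with multiplicity. Since $\lvert \varphi(0) \rvert \geq 1$, the function $\varphi$ is not identically zero and $M_\varphi \geq 1$; Jensen's formula on $D_4$ (valid even with zeros on $\partial D_4$) together with $\log \lvert \varphi(0) \rvert \geq 0$ gives
\[
\sum_{a \colon \varphi(a) = 0,\ \lvert a \rvert < 4} \log \frac{4}{\lvert a \rvert} = \frac{1}{2\pi}\int_0^{2\pi}\log\lvert \varphi(4 \euler^{i\theta})\rvert\,\drm\theta - \log\lvert\varphi(0)\rvert \leq \log M_\varphi,
\]
and since every zero $a$ with $\lvert a \rvert < 2$ contributes at least $\log 2$ to the left-hand side, $n \log 2 \leq \log M_\varphi$, i.e. $n \leq \frac{\log M_\varphi}{\log 2}$. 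Next I would write $\varphi = B\, g$, where $B(z) = \prod_{j=1}^n \frac{2(z - a_j)}{4 - \overline{a_j} z}$ is the Blaschke product of $D_2$ formed from the zeros $a_1, \dots, a_n$ of $\varphi$ in the open disc $D_2$ (so the reflected points $4/\overline{a_j}$ lie outside $\overline{D_2}$ and $B$ has no poles there, while $g$ has only removable singularities). Then $g$ is analytic and non-vanishing on a neighbourhood of $\overline{D_2}$, $\lvert B \rvert \equiv 1$ on $\partial D_2$, hence $\lvert g \rvert = \lvert \varphi \rvert \leq M_\varphi$ on $\partial D_2 \subset D_4$ and so, by the maximum principle, on all of $D_2$; moreover $\lvert B(0) \rvert = \prod_j \lvert a_j \rvert / 2 \leq 1$, so $\lvert g(0) \rvert = \lvert \varphi(0) \rvert / \lvert B(0) \rvert \geq 1$. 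Applying Harnack's inequality on $D_2$ to the non-negative harmonic function $\log M_\varphi - \log\lvert g\rvert$, which takes a value $\leq \log M_\varphi$ at the origin, yields for $\lvert z \rvert \leq 1$
\[
\log M_\varphi - \log\lvert g(z)\rvert \leq \frac{2+1}{2-1}\bigl(\log M_\varphi - \log\lvert g(0)\rvert\bigr) \leq 3 \log M_\varphi,
\]
hence $\lvert g(z) \rvert \geq M_\varphi^{-2}$ for all $z \in D_1 \supseteq [0,1]$. Combined with $\lvert g \rvert \leq M_\varphi$ on $[0,1]$, this gives for any measurable $E \subseteq [0,1]$ that $\sup_{[0,1]}\lvert\varphi\rvert \leq M_\varphi^{3}\,\bigl(\sup_{[0,1]}\lvert B\rvert\bigr)\bigl(\sup_E\lvert B\rvert\bigr)^{-1}\sup_E\lvert\varphi\rvert$.

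It then remains to estimate $\sup_{[0,1]}\lvert B\rvert / \sup_E\lvert B\rvert$. On $[0,1]$ one has $2 \leq \lvert 4 - \overline{a_j} t\rvert \leq 6$ whenever $\lvert a_j\rvert < 2$, so, setting $p(t) := \prod_{j=1}^n (t - a_j)$, the elementary bounds $3^{-n}\lvert p(t)\rvert \leq \lvert B(t)\rvert \leq \lvert p(t)\rvert$ hold for $t \in [0,1]$; therefore $\sup_{[0,1]}\lvert B\rvert / \sup_E\lvert B\rvert \leq 3^n \sup_{[0,1]}\lvert p\rvert / \sup_E\lvert p\rvert$. The Remez inequality for a polynomial of degree $n$ and a measurable set $E \subseteq [0,1]$ of positive measure gives $\sup_{[0,1]}\lvert p\rvert \leq (4/\vol(E))^n \sup_E\lvert p\rvert$, whence $\sup_{[0,1]}\lvert B\rvert / \sup_E\lvert B\rvert \leq (12/\vol(E))^n$. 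Assembling everything and using $n \leq \log M_\varphi / \log 2$ together with $12/\vol(E) \geq 12 > 2^3$ — so that $(12/\vol(E))^{\log M_\varphi/\log 2} \geq M_\varphi^3$ and $(12/\vol(E))^{n} \leq (12/\vol(E))^{\log M_\varphi/\log 2}$ — one obtains
\[
\sup_{t\in[0,1]}\lvert\varphi(t)\rvert \leq M_\varphi^{3}\Bigl(\frac{12}{\vol(E)}\Bigr)^{n}\sup_{t\in E}\lvert\varphi(t)\rvert \leq \Bigl(\frac{12}{\vol(E)}\Bigr)^{2\frac{\log M_\varphi}{\log 2}}\sup_{t\in E}\lvert\varphi(t)\rvert,
\]
which is the claim.

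I expect the points demanding the most care to be the invocation of the Remez inequality in exactly the right normalisation — the bound $\sup_{[0,1]}\lvert p\rvert \leq (4/\vol(E))^n \sup_E\lvert p\rvert$ for arbitrary measurable subsets $E$ of $[0,1]$ (not just intervals), which is classical but whose constant must be tracked — and the arithmetic of the nested radii, since the Jensen bound $n \leq \log M_\varphi/\log 2$, the Harnack factor $3$ on $D_2$, and the Blaschke-versus-polynomial comparison factor $3^n$ on $[0,1]$ all hinge on the specific choice $D_1 \subseteq D_2 \subseteq D_4$ and must combine to give precisely the exponent $2\log M_\varphi/\log 2$ and the constant $12$. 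The remaining steps (Jensen, Harnack, maximum principle, the bound $2 \leq \lvert 4 - \overline{a_j}t\rvert \leq 6$) are routine.
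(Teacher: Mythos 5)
Your proof is correct and follows essentially the same route as the paper's Appendix~A argument (Kovrijkine's): Jensen's formula to bound the number of zeros in $D_2$ by $\log M_\varphi/\log 2$, a Blaschke factorization $\varphi = Bg$ with the maximum principle and Harnack on $D_2$ giving $M_\varphi^{-2} \leq \lvert g \rvert \leq M_\varphi$ on $D_1$, the bound $2 \leq \lvert 4 - \overline{a_j}t\rvert \leq 6$ yielding the $3^n$ comparison with a degree-$n$ polynomial, and the Remez inequality with constant $4$, combined exactly as in the paper. The only differences are bookkeeping (you compare $B$ directly with $\prod(t-a_j)$ rather than splitting $B = P/Q$ as the paper does), which does not change the substance.
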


	For convenience of the reader, we provide a proof of Lemma~\ref{lem:one-dimensional} in Appendix~\ref{app:Remez}.

	\begin{proof}[Proof of Lemma~\ref{lem:local_estimate}]
	For all $C > 0$, we clearly have
	\begin{align*}
		\lVert g \rVert_{L^1(Q \cap U)}
		&\geq
		\lVert 
		\mathbf{1}_{\{ x \in Q \cap U \colon \lvert g (x) \rvert > C \lVert g \rVert_{L^1(Q)}\}}
		\cdot g
		\rVert_{L^1(Q)}
		\\
		&\geq
		C
		\lVert g \rVert_{L^1(Q)}
		\cdot
		\vol 
		\left(
			x \in Q \cap U \colon \lvert g (x) \rvert > C \lVert g \rVert_{L^1(Q)}
		\right).
	\end{align*} 
	Using this with 
	\[
	C 
	=
	\left(
		\frac{\vol(A(Q \cap U)}{48 \pi \diam (A(Q))^2}
	\right)^{2 \frac{\log M}{\log 2}}
	\cdot
	\frac{1}{\vol ( Q )},
	\]
	the first stated inequality follows if we prove
	\[
	\vol \left(
			x \in Q \cap U \colon \lvert g (x) \rvert > C \lVert g \rVert_{L^1(Q)}
	\right)
	\geq
	\frac{\vol (Q \cap U)}{2}
	\]
	which is certainly the case if
	\begin{equation}
	\label{eq:inequality_W}
	\vol ( W )
	\leq
	\frac{\vol (Q \cap U)}{2},
	\quad
	\text{where}
	\quad
		W
		:=
		\left\{
			x \in Q
			\colon
			\lvert g(x) \rvert \leq C \lVert g \rVert_{L^1(Q)}
		\right\},
	\end{equation}
	i.e., the set $W$ where $\lvert g \rvert$ is ``small'' has no more than half of the Lebesgue mass of $Q \cap U$.
	To see~\eqref{eq:inequality_W}, we may assume without loss of generality $W \neq \emptyset$.
	We will first show that there is a line segment $I = I(y_0,W,Q) \subset Q$ of the form
	\[
	I
	=
	\{
	y_0 + t \xi_0 \colon t \in [0,t_{\max}]
	\}
	\]
	where $y_0, \xi_0 \in \RR^2$ and $t_{\max} > 0$,
	such that
	\begin{equation}
	\label{eq:line_segment_I}
	\frac{\vol_1 ( I \cap W)}{\vol_1 ( I )}
	\geq
	\frac{\vol (A(W))}{2 \pi \diam(A(Q))^2}.
	\end{equation}
	Indeed, there is $y_0 \in Q$ with $\lvert g(y_0) \rvert \geq \frac{\lVert g \rVert_{L^1(Q)}}{\vol (Q)}$.
		Using spherical coordinates around $A(y_0)$,
	\[
	\vol (A(W)) 
	=
	\int_0^{2 \pi}
	\int_0^\infty
	s
	\cdot
	\mathbf{1}_{A(W)}
	\left(
		A(y_0) + s 
		\begin{pmatrix} 
		\cos(\theta)\\ \sin(\theta) 
		\end{pmatrix}
	\right)
	\drm s
	\
	\drm \theta
	\]
	whence there exists $\xi_0 \in \RR^2$ with $\lvert \xi_0 \rvert = 1$ such that
	\[
	\vol ( A(W) )
	\leq
	2 \pi
	\int_0^\infty
	s
	\cdot
	\mathbf{1}_{A(W)}
	\left(
	A(y_0) 
	+ 
	s \xi_0 
	\right)
	\drm s
	.
	\]
	Defining
	\[	
	\eta_0
	:= 
	\frac{A^{-1}(\xi_0)}{\lvert A^{-1} (\xi_0) \rvert}
	\]
	and denoting by $I \subset Q$ the line segment of maximal length within $\overline Q$, that is
	\[
	I
	=
	\{ y_0 + t c_{\max} \cdot \eta_0 \ \colon t \in [0, 1] \}
	\]
	where $c_{\max} > 0$ is the maximal parameter such that $I \subset Q$ we have
	\[
	\vol ( A(W) )
	\leq
	2 \pi
	\vol_1 ( A(I \cap W) )
	\vol_1 ( A(I) ).
	\] 
	Taking into account $\diam(A(Q)) \geq \vol_1(I)$, the line segment $I \subset \overline{Q}$ indeed satisfies~\eqref{eq:line_segment_I}.
	Since $Q$ is open, there is $\epsilon > 0$ such that $y_0 + \vol_1 (I) \cdot \eta_0 z \in Q + D_{(4 \ell_1, 4 \ell_2)}$ for $z \in D_{4 + \epsilon}$.
	Define
	\[
	\varphi (z)
	:=
	\frac{\vol (Q)}{\lVert g \rVert_{L^1(Q)}}
	\cdot 
	G(y_0 + \vol_1 (I) \cdot \eta_0\ z)
	\in
	\CC
	\]
	where we recall that $G$ denotes the analytic continuation of $g$.
	By assumption, $\varphi$ is analytic on $D(4 + \epsilon)  \subset \CC$, and satisfies
	   \[
     \sup_{t \in [0,1]} \lvert \varphi(t) \rvert
     \geq
     \lvert \varphi(0) \rvert
     =
     \frac{\vol (Q) \cdot \lvert g(y_0) \rvert}{\lVert g \rVert_{L^1(Q)}}
     \geq
     1
    \]
    as well as
    \[
     M
     :=
     \sup_{z \in D(4)}
     \lvert \varphi(z) \rvert
     \leq
     \frac{\vol (Q)}{\lVert g \rVert_{L^1(Q)}}
     \sup_{z \in y_0 + D_{(4 \ell_1, 4 \ell_2)}}
     \lvert G(z) \rvert
     .
    \]
    We may assume $M > 1$ because if $M = 1$, then $g$ would be constant on $Q$ and the statement would follow immediately.
    Applying Lemma~\ref{lem:one-dimensional} with $E := \{ t \in [0,1] \colon y_0 + \vol_1 (I) \cdot \eta_0 \ t \in I \cap W \} \subseteq [0,1]$ yields
    \[
     \sup_{t \in E}
     \lvert \varphi(t) \rvert
     \geq
     \left(
        \frac{\vol (E) }{12}
     \right)^{2 \frac{\log M}{\log 2}}
     \sup_{t \in [0,1]}
     \lvert
     \varphi(t)
     \rvert
     \geq
     \left(
        \frac{\vol (E)}{12}
     \right)^{2 \frac{\log M}{\log 2}}.
    \]
    Using the definition of $\varphi$ and recalling that $G \mid_Q = g$, this becomes
    \[
     \sup_{t \in E}
     \lvert g(y_0 + \vol_1 (I) \cdot \eta_0 t) \rvert
     \geq
     \left(
        \frac{\vol (E) }{12}     
    \right)^{2 \frac{\log M}{\log 2}}
    \cdot
    \frac{\lVert g \rVert_{L^1(Q)}}{\vol (Q)}.
    \]
	Since $\vol (E) = \frac{\vol_1 (I \cap W)}{\vol (I)} \geq
	\frac{\vol (A(W))}{2 \pi \diam(A(Q))^2}$ by~\eqref{eq:line_segment_I}, we infer
	 \[
     \left(
        \frac{\vol (A(W))}{24 \pi \diam (A(Q))^2}
     \right)^{2 \frac{\log M}{\log 2}}
     \cdot
     \frac{\lVert g \rVert_{L^1(Q)}}{\vol (Q)}
     \leq
     \sup_{x \in W}
     \lvert g(x) \rvert.
    \]
    Combining this with the definition of $W$, we obtain
    \begin{align*}
     \sup_{x \in W}
     \lvert g(x) \rvert
     &\leq
     \left(
        \frac{\vol (A(Q \cap U))}{48 \pi \diam (A(Q))^2}
     \right)^{2 \frac{\log M}{\log 2}}
     \cdot
     \frac{\lVert g \rVert_{L^1(U)}}{\vol (Q)}
     \\
     &=
     \left(
     	\frac{ \vol (A(Q \cap U))}{2 \vol (A(W))}
     	\cdot
     	\frac{ \vol (A(W))}{ 24 \pi \diam (A(Q))^2}
     \right)^{2 \frac{\log M}{\log 2}}
     \cdot
     \frac{\lVert g \rVert_{L^1(U)}}{\vol Q}
     \\
     &\leq
     \left(
     	\frac{\vol (Q \cap U)}{2 \vol (W)}
     \right)^{2 \frac{\log M}{\log 2}}
     \sup_{x \in W} \lvert g(x) \rvert.
    \end{align*}
	Recalling $M > 1$, this implies $\vol (Q \cap U) \geq 2 \vol (W)$ and concludes the proof of the first stated inequality.
	The second inequality follows from $\vol (A(Q)) \leq \pi \diam (A(Q))^2$.
\end{proof}

\subsection{Good and bad rectangles}
	We cover $\RR^2$ by a family $(Q_j)_{j \in \NN}$ of open rectangles of side lengths $\ell_1$ and $\ell_2$, parallel to the coordinate axes, such that any two rectangles do not overlap and the complement of their union is a measure zero set.
	
	\begin{remark}
	\label{rem:overlap}
	To prove Theorem~\ref{thm:main_bounded_domain}, the analogous result to Theorem~\ref{thm:main} on rectangles $\Lambda_L$, the side lengths $L_1, L_2$ of the domain might not be multiples of $\ell_1, \ell_2$ and we will not be able to cover $\Lambda_L$ perfectly by a union of small rectangles of side lengths $\ell_1, \ell_2$.
	However, we can obtain a covering such that every point is contained in at most four elements of the covering.
	So, the arguments below will have to be amended with a factor four, see also~\cite{EgidiS-21}, where this argument is elaborated in a more general setting, using a more general notion of coverings.
	\end{remark}

	\begin{definition}
		\label{def:good_bad}
		Given $E, B > 0$ and $f \in \ran \mathbf{1}_{(- \infty, E]}(H_B)$, we call a rectangle $Q_j$ \emph{good} if
		\[
		\lVert \partial^\alpha \lvert f \rvert^2 \rVert_{L^1(Q_j)}
		\leq
		4^{m+1}
		C_B'(m) 
		\lVert f \rVert_{L^2(Q_j)}^2
		\]
		for all $m \in \NN$ and $\alpha \in \{1,2\}^m$, where $C_B'(m)$ is defined in~\eqref{eq:magnetic_Bernstein_1}, and \emph{bad} otherwise.
	\end{definition}

	\begin{lemma}
		\label{lem:good_bad}
		Let $E, B > 0$ and $f \in \ran \mathbf{1}_{(- \infty, E]}(H_B)$. 
		Then
		\[
		\sum_{j \colon Q_j \mathrm{good}}
		\lVert f \rVert_{L^2(Q_j)}^2
		\geq
		\frac{1}{2}
		\lVert f \rVert_{L^2(\RR^2)}^2.
		\]
	\end{lemma}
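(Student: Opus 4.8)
The statement asserts that the "good" rectangles capture at least half of the $L^2$-mass of $f$, which is equivalent to saying the "bad" rectangles carry at most half the mass. The natural route is to bound the total mass over bad rectangles. The plan is to sum the defining failure condition over all bad rectangles and all $m$, and exploit the fact that the summability of $\sum_m 4^{-(m+1)}$ (or a comparable geometric-type series) forces the bad part to be small. Concretely, for a bad rectangle $Q_j$ there exists, by Definition~\ref{def:good_bad}, some $m = m(j) \in \NN$ and $\alpha \in \{1,2\}^{m}$ with $\lVert \partial^\alpha \lvert f \rvert^2 \rVert_{L^1(Q_j)} > 4^{m+1} C_B'(m) \lVert f \rVert_{L^2(Q_j)}^2$, so
\[
\lVert f \rVert_{L^2(Q_j)}^2
<
\frac{1}{4^{m(j)+1} C_B'(m(j))}
\lVert \partial^{\alpha} \lvert f \rvert^2 \rVert_{L^1(Q_j)}
\leq
\sum_{m \in \NN}
\frac{1}{4^{m+1} C_B'(m)}
\sum_{\alpha \in \{1,2\}^m}
\lVert \partial^{\alpha} \lvert f \rvert^2 \rVert_{L^1(Q_j)}.
\]

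First I would sum this inequality over all bad indices $j$ and then interchange the order of summation, using that the rectangles $(Q_j)$ are essentially disjoint and cover $\RR^2$, so that $\sum_j \lVert \partial^\alpha \lvert f \rvert^2 \rVert_{L^1(Q_j)} = \lVert \partial^\alpha \lvert f \rvert^2 \rVert_{L^1(\RR^2)}$. This yields
\[
\sum_{j \colon Q_j \text{ bad}} \lVert f \rVert_{L^2(Q_j)}^2
\leq
\sum_{m \in \NN}
\frac{1}{4^{m+1} C_B'(m)}
\sum_{\alpha \in \{1,2\}^m}
\lVert \partial^{\alpha} \lvert f \rvert^2 \rVert_{L^1(\RR^2)}.
\]
Now I would invoke the global Bernstein-type inequality~\eqref{eq:magnetic_Bernstein_1}, which states $\sum_{\alpha \in \{1,2\}^m} \lVert \partial^\alpha \lvert f \rvert^2 \rVert_{L^1(\RR^2)} \leq C_B'(m) \lVert f \rVert_{L^2(\RR^2)}^2$. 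The factor $C_B'(m)$ cancels exactly against the $C_B'(m)$ in the denominator, leaving
\[
\sum_{j \colon Q_j \text{ bad}} \lVert f \rVert_{L^2(Q_j)}^2
\leq
\Bigl( \sum_{m \in \NN} 4^{-(m+1)} \Bigr)
\lVert f \rVert_{L^2(\RR^2)}^2
\leq
\frac{1}{3}
\lVert f \rVert_{L^2(\RR^2)}^2
\leq
\frac{1}{2}
\lVert f \rVert_{L^2(\RR^2)}^2,
\]
where I use $\sum_{m \geq 0} 4^{-(m+1)} = 1/3$ (or $\sum_{m \geq 1} 4^{-(m+1)} = 1/12$, depending on the indexing convention for $\NN$; either way the bound is comfortably below $1/2$). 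Subtracting from $\sum_j \lVert f \rVert_{L^2(Q_j)}^2 = \lVert f \rVert_{L^2(\RR^2)}^2$ gives the claim.

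The argument is essentially a Chebyshev/averaging device, and I do not expect a serious obstacle: the one point requiring a little care is the bookkeeping with the quantifier "there exists $m$" — a bad rectangle fails the bound for \emph{some} $m$, not all, so one should not double-count, and the clean way is simply to bound the single failing term by the full sum over $m$ as above, which is wasteful but sufficient. A second minor point is making sure the factor $4^{m+1}$ in Definition~\ref{def:good_bad} is large enough that the resulting geometric series sums to something strictly less than $1/2$; with base $4$ this is immediate, and the extra power of $4$ (the "$+1$") also provides the slack needed later when the covering of a bounded box $\Lambda_L$ is only four-fold overlapping rather than a partition, cf.\ Remark~\ref{rem:overlap}. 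Finally, one uses that the $L^2$-mass is additive over the partition, i.e.\ $\sum_j \lVert f \rVert_{L^2(Q_j)}^2 = \lVert f \rVert_{L^2(\RR^2)}^2$, which holds since the $Q_j$ are disjoint up to a null set.
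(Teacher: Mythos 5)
Your proof is correct and follows essentially the same Chebyshev-type averaging argument as the paper: bound each bad rectangle by the full sum over $m$ and $\alpha$, interchange sums using the essential disjointness of the $(Q_j)$, and apply the $L^1$ Bernstein inequality~\eqref{eq:magnetic_Bernstein_1} so that $C_B'(m)$ cancels, leaving a convergent geometric series below $\tfrac12$. The only cosmetic difference is that you use the summed-over-$\alpha$ form of~\eqref{eq:magnetic_Bernstein_1} directly (giving $\tfrac13$), while the paper applies it per $\alpha$ and picks up the factor $2^m$ to land exactly on $\tfrac12$; both suffice.
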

	
	\begin{proof}
	Using the definition of badness and Theorem~\ref{thm:magnetic_Bernstein_2}, we estimate
	\begin{align*}
		\sum_{j \colon Q_j \text{bad}}
		\lVert f \rVert_{L^2(Q_j)}^2
		&\leq
		\frac{1}{4}
		\sum_{j \colon Q_j \text{bad}}
		\sum_{m = 0}^\infty
		\sum_{\alpha \in \{1,2\}^m}
		\frac{1}{4^m C_B'(m)}
		\lVert
		\partial^\alpha \lvert f \rvert
		\rVert_{L^1(Q_j)}^2
		\\
		&\leq
		\frac{1}{4}
		\sum_{m = 0}^\infty
		\sum_{\alpha \in \{1,2\}^m}
		\frac{1}{4^m C_B'(m)}
		\lVert
		\partial^\alpha \lvert f \rvert^2 
		\rVert_{L^1(\RR^2)}
		\\
		&\leq
		\frac{1}{4}
		\sum_{m = 0}^\infty
		\sum_{\alpha \in \{1,2\}^m}
		\frac{1}{4^m}
		\lVert f \rVert_{L^2(\RR^2)}^2
		= 
		\frac{1}{2}
		\lVert f \rVert_{L^2(\RR^2)}^2.
		\qedhere
	\end{align*}

	\end{proof}

	\subsection{Proof of Theorem~\ref{thm:main}}

	By Lemma~\ref{lem:analytic}, $\lvert f \rvert^2$ is analytic in $\RR^2$ with an analytic extension $\Phi$ to $\CC^2$.
	We may assume without loss of generality that $f$ does not vanish on $\RR^2$ and thus also, by analyticity, on none of the $Q_j$.
	Let $Q = Q_j$ be a good rectangle. 
	Then, we claim that there exists a point $x_0 \in Q$ such that for all $m \in \NN$ and all $\alpha \in \{1,2\}^m$ one has
	\begin{align}
	\label{eq:good_point}
	\lvert \partial^\alpha \lvert f \rvert^2 (x_0) \rvert
	\leq
	\frac{8^{m+1}
	C_B'(m)
	\lVert f \rVert_{L^2(Q)}^2}{\vol (Q)}
	.
	\end{align}
	Indeed, if there was no such point then for all $x \in Q_j$
	\[
	\frac{\lVert f \rVert_{L^2(Q)}^2}{\vol (Q)}
	<
	\sum_{m = 0}^\infty
	\sum_{\alpha \in \{1,2\}^m}
	\frac{1}{8^{m+1} C_B'(m)}
	\lvert \partial^\alpha \lvert f \rvert^2 (x) \rvert.
	\]
	But then, integration over $x \in Q_j$ and the definition of good rectangles would imply
	\[
	\lVert f \rVert_{L^2(Q)}^2
	<
	\sum_{m = 0}^\infty
	\sum_{\alpha \in \{1,2\}^m}
	\frac{1}{8^{m+1} C_B'(m)} 
	\lVert \partial^\alpha \lvert f \rvert^2 \rVert_{L^1(Q_j)}
	\leq
	\sum_{m = 0}^\infty	
	\frac{1}{2^{m+1}}
	\lVert f \rVert_{L^2(Q_j)}^2
	=
	\lVert f \rVert_{L^2(Q_j)}^2,
	\]
	a contradiction.
	This shows the existence of $x_0$ as in~\eqref{eq:good_point}.
	In particular, for every $z \in D_{(5 \ell_1, 5 \ell_2)} + x_0$
	\begin{align*}
		\lvert
		\Phi(z)
		\rvert
		&\leq
		\sum_{m = 0}^\infty
		\sum_{\alpha \in \{1,2\}^m}
		\frac{\lvert \partial^\alpha \lvert f \rvert^2 (x_0) \rvert}{m!} \lvert z - x_0 \rvert^\alpha
		\\\
		&\leq
		\sum_{m = 0}^\infty
		\sum_{\alpha \in \{1,2\}^m}
		\frac{8^{m+1} C'_B(m)}{m!}
		(5 \ell)^\alpha
		\frac{\lVert f \rVert_{L^2(Q)}^2}{\vol (Q)}
		\leq
		8
		\frac{\lVert f \rVert_{L^2(Q)}^2}{\vol (Q)}
		\sum_{m \in \NN} \frac{(40\lvert \ell \rvert_1)^m
		C'_B(m)}{m!}.
	\end{align*}
	We can therefore apply Lemma~\ref{lem:local_estimate} with $g = \lvert f \rvert^2$, $G = \Phi$, and, recalling $C_B'(m) = 2^{3m/2} (E + B m)^{m/2} \leq 3^m (E + B m)^{m/2}$, 
	\begin{align*}
	M_\varphi 
	&=
	8
	\sum_{m = 0}^\infty \frac{(40 \lvert \ell \rvert_1 )^m
	C'_B(m)}{m!}
	\leq
	8
	\sum_{m = 0}^\infty \frac{(120 \lvert \ell \rvert_1)^m (\sqrt{E} + \sqrt{B m})^{m}}{m!}
	\\
	&\leq
	8
	\sum_{m = 0}^\infty
	\frac{ (240 \lvert \ell \rvert_1 \sqrt{E})^m}{m!}
	+
	8
	\sum_{m = 0}^\infty
	\frac{ (240 \lvert \ell \rvert_1 \sqrt{B m})^m}{m!}
	.
	\end{align*}
	where we used $(a + b)^m \leq 2^m (a^m + b^m)$.
    Now, note that for all $s \geq 0$
    \begin{equation}
     \label{eq:lemma_with_s}
    \begin{aligned}
	\sum_{m = 0}^\infty \frac{(s \sqrt{m})^{m}}{m!} 
	=& 
	\sum_{k=0}^\infty s^{2k} \left( \frac{ \sqrt{2k}^{2k} }{(2k)!} + s \frac{ \sqrt{2k+1}^{2k+1}}{(2k+1)!} \right) 
	\leq
	(1 + s)
	\sum_{k = 0}^\infty
	\frac{(s \sqrt{2k})^{2 k}}{(2 k)!}
	\\
	&=
	(1 + s)
	\sum_{k = 0}^\infty
	\frac{(2 s^2)^k k^k}{(2k)!}
	\leq
	\exp(2 s^2 + s)
	\end{aligned}
	\end{equation}
	where we used $1 + s \leq \exp(s)$, and $(2k)! \geq k^k k!$ in the last step.
	Using~\eqref{eq:lemma_with_s} with $s = 240 \lvert \ell \rvert_1 \sqrt B$, we further estimate
    \begin{align*}
     M_\varphi
     &\leq
     8
     \exp \left( 240 \lvert \ell \rvert_1 \sqrt{E} \right)
     +
     8
     \exp \left( 240 \lvert \ell \rvert_1 \sqrt{B} + 2 \cdot 240^2 \lvert \ell \rvert_1^2 B \right)
     \\
     &\leq
     16
     \exp \left( 2 \cdot 240^2 \left( \lvert \ell \rvert_1 \sqrt{E} + \lvert \ell \rvert_1 \sqrt{B} + \lvert \ell \rvert_1^2 B \right) \right), 
    \end{align*}
    whence
	\[
	\ln M_\varphi
	\leq
	\ln 16
	+
	240 \lvert \ell \rvert_1 \sqrt{E}
	+
	2 \cdot 240^2 \left(\lvert \ell \rvert_1 \sqrt{B} + \lvert \ell \rvert_1^2 B \right).
	\]
	Therefore, we obtain for every good rectangle $Q_j$
	\begin{align*}
	\lVert f \rVert_{L^2(Q_j)}^2
	&\leq
	2
	\left(
	\frac{48 \pi \diam (A(Q_j))^2}{\vol (A(Q_j \cap S))}
	\right)^{C_2 + C_3 \lvert \ell \rvert_1 \sqrt{E} + C_4 \left( \lvert \ell \rvert_1 \sqrt{B} + \lvert \ell \rvert_1^2 B \right)}
	\lVert f \rVert_{L^2(Q_j \cap S)}^2.
	\end{align*}
	Choose the linear bijection $A$ to map every $Q_j$ to a square of unit length such that
	\[
	\frac{48 \pi \diam (A(Q_j))^2}{\vol (A(Q_j \cap S))}
	\leq
	\frac{96 \pi}{\rho}.
	\]
	Finally, summing over all good rectangles and using Lemma~\ref{lem:good_bad} we have
	\begin{align*}
	\lVert f \rVert_{L^2(\RR^2)}^2
	&\leq
	2
	\sum_{j \colon Q_j \text{good}}
	\lVert f \rVert_{L^2(\RR^2)}^2
	\leq
	\sum_{j \colon Q_j \text{good}}
	4
	\left(
	\frac{96 \pi}{\rho}
	\right)^{C_2 + C_3 \lvert \ell \rvert_1 \sqrt{E} + C_4 \left( \lvert \ell \rvert_1 \sqrt{B} + \lvert \ell \rvert_1^2 B \right)}
	\lVert f \rVert_{L^2(Q_j \cap S)}^2
	\\
	&\leq
	4
	\left(
	\frac{C_1}{\rho}
	\right)^{C_2 + C_3 \lvert \ell \rvert_1 \sqrt{E} + C_4 \left( \lvert \ell \rvert_1 \sqrt{B} + \lvert \ell \rvert_1^2 B \right)}
	\lVert f \rVert_{L^2(S)}^2.
	\end{align*}
	Using $\rho \leq 1$ to absorb the prefactor $4$ into the constant $C_2$, and using $B \leq E$ ($H_B$ has no spectrum below $B$) to absorb the term $\lvert \ell \rvert_1 \sqrt{B}$ into $\lvert \ell \rvert_1 \sqrt{E}$, we obtain the statement. 
	\qed

\section{Bounded domains}	
	\label{sec:finite_volume}
	
	In this section, we define finite-volume restrictions of $H_B$ onto rectangles, and indicate necessary modifications to the proof of Theorem~\ref{thm:main} in order to prove Theorem~\ref{thm:main_bounded_domain}.

	Let the \emph{magnetic translations} be defined by
	\[
		\left(
			\Gamma_{y} 
		\right)_{y \in \RR^2}
		\colon
		L^2(\RR^2) \to L^2(\RR^2)
		,
		\quad
		\left(
			\Gamma_{y} f 
		\right) (x)
		=
		\euler^{i \frac{B}{2} ( y_2 - y_1)}
		f(x - y)
	\]
	where $y = (y_1, y_2)$.
	This is a  family of unitary operators, where in contrast to usual translations on $\RR^2$, they only form a commutative group if we restrict them to vectors $y$ satisfying the so-called \emph{integer flux condition}
	\begin{equation}
	\label{eq:integer_flux_condition}
	B (y_2 - y_1) \in 2 \pi \ZZ.
	\end{equation}
	Following~\cite{Sjoestrand-91}, we define 
	\begin{multline*}
	\mathcal{H}_{B,\mathrm{loc}}^m (\RR^2)
	:=\\
	=\left\{
		f \in L^2_{\mathrm{loc}}(\RR^2)
		\colon
		\tildel_{\alpha_1} \dots \tildel_{\alpha_p} f \in L^2_{\mathrm{loc}}(\RR^2)
		\
		\forall \alpha = (\alpha_1, \dots, \alpha_p) \in \{1, 2 \}^p, p \leq m
	\right\},
	\end{multline*}
	as well as restrictions of these spaces to boxes
	\[
	\mathcal{H}_B^m (\Lambda_L)
	:=
	\left\{
		f \mid_{\Lambda_L}
		\colon
		f \in \mathcal{H}_{B,\mathrm{loc}}^m (\RR^2)
	\right\},
	\]
	and their ``periodic'' versions
	\[
	\mathcal{H}_{B, \mathrm{per}}^m (\Lambda_L)
	:=
	\left\{
		f \mid_{\Lambda_L}
		\colon
		f \in \mathcal{H}_{B,\mathrm{loc}}^m (\RR^2)\
		\text{with $\Gamma_y f = f$ for all $y$ satisfying~\eqref{eq:integer_flux_condition}}
	\right\}.
	\]
	Functions in $\mathcal{H}_{B, \mathrm{per}}^m (\Lambda_L)$ satisfy periodic boundary conditions where the usual periodicity has been replaced by invariance under magnetic translations.
	Now, a local version of the Landau operator can be defined as the self adjoint operator $H_{B,L}$ in the Hilbert space $L^2(\Lambda_L)$ with domain
	\[
	\mathcal{D}(H_{B,L}) = \mathcal{H}_{B, \mathrm{per}}^2(\Lambda_L)
	\]
	and acting as 
	\[
	H_{B,L} = \left( i \nabla + \frac {B}{2} \begin{pmatrix} - x_2 \\ x_1 \end{pmatrix} \right)^2
	.
	\]
	In particular, if~\eqref{eq:integer_flux_condition} is satisfied, then $\sigma(H_L)$ coincides with $\sigma(H) = \{ B, 3B, \dots \}$.

Let us now indicate which modifications are necessary for Theorem~\ref{thm:main_bounded_domain}.
		Large parts of the proof of Theorem~\ref{thm:magnetic_Bernstein} (the magnetic Bernstein inequalities on $\RR^2$) are analogous but we need to justify that when performing integration by parts, boundary terms will disappear. 
		This was obvious for Schwarz functions on $\RR^2$, but one needs a reasoning in the finite volume case.
	Due to the definition of $\mathcal{D}(H_L)$, it follows that one can use integration by parts for the magnetic derivatives $\tildel_1, \tildel_2$ of functions $f \in \mathcal{D}(H_L)$. 
	However, we also need integration by parts for higher order magnetic derivatices.
	For this purpose, the key is to observe that
	\[
	\mathcal{D}(H_{B,L}^k)
	\subseteq 
	\mathcal{H}_{B, \mathrm{per}}^{2 k}(\Lambda_L)
	\quad
	\text{for all $k \in \NN$}.
	\]
	This can for instance be seen by unitarity of the Floquet transform which pointwise maps $\mathcal{H}^{2k}_B(\RR^2)$ to $\mathcal{D}(H_{B,L}^k) (\Lambda_L)$ and which maps the domain of $H_B^k$ onto the one of $H_{B,L}^k$, cf.~\cite{Sjoestrand-91}.
	This justifies to also use integration by parts for the magnetic derivatives on any function
	\[
	f 
	\in 
	\ran \mathbf{1}_{(- \infty, E]} (H_{B,L})
	\subset
	\bigcap_{k \geq 1}
	\mathcal{D}(H_{B,L}^k) 
	\]
	and to prove finite-volume counterparts of
	Theorems~\ref{thm:magnetic_Bernstein} and~\ref{thm:magnetic_Bernstein_2}.
	
	The remaining steps of the proof of Theorem~\ref{thm:main_bounded_domain} follow with obvious modifications:
	\begin{itemize}
	\item
	Functions in $\ran \mathbf{1}_{(- \infty, E]} (H_{B,L})$ are a priori defined on $\Lambda_L$, but by magnetic translation they extend to functions on arbitrarily large boxes.
	\item
	Instead of Lemma~\ref{lem:analytic}, which shows analyticity of $\lvert f \rvert^2$ on $\RR^2$, it suffices to prove analyticity of their extension of boxes of fixed size, depending on $\ell = (\ell_1, \ell_2)$ -- the parameters from the definition of thickness.
	Instead of the global Sobolev estimate, we will therefore use a local Sobolev estimate in Lemma~\ref{lem:analytic} in order to infer analyticity, but the constant will remain uniformly bounded.
	\item
	We can no longer cover $\Lambda_L$ by mutually disjoint rectangles of side lengths $\ell_1, \ell_2$, but we can bound the overlap, see Remark~\ref{rem:overlap}.
	\end{itemize}

	The rest of the proof of Theorem~\ref{thm:main_bounded_domain} works verbatim as the proof of Theorem~\ref{thm:main}. 
	Note that the corresponding modifications for dealing with domains which are not $\RR^2$ itself are treated for example in the general setting in~\cite{EgidiS-21}, and in a more particular setting for the pure Laplacian in~\cite{EgidiV-20, Egidi-21}.

\section{Applications}
	\label{sec:applications}

\subsection{Controllability of the magnetic heat equation}

\label{sec:heat}

Consider the \emph{controlled heat equation with magnetic generator}
\begin{equation}
	\label{eq:magnetic_heat_controlled}
	\begin{cases}
		\frac{\partial}{\partial t} u + H_B u = \mathbf{1}_S f
		&
		\quad
		\text{in $\RR^2 \times (0,T)$},
		\\
		u(0) = u_0
		&
		\quad
		\in L^2(\RR^2).
	\end{cases}
\end{equation}
System~\eqref{eq:magnetic_heat_controlled} describes the diffusion of a (non-interacting) gas of charged particles in a plane, subject to a constant magnetic field, and controlled through an electric potential in $S \subseteq \RR^2$.

\begin{definition}
	System~\eqref{eq:magnetic_heat_controlled} is called \emph{null-controllable in time $T > 0$} if for every $u_0 \in L^2(\RR^2)$, there exists $f \in L^2((0, T) \times S)$ such that the solution of~\eqref{eq:magnetic_heat_controlled} satisfies $u(T) = 0$.
\end{definition}

The reason for restricting to the target state $u(T) = 0$ is that by linearity, this is equivalent to every state $u(T)$ in the range of the semigroup $(\euler^{- H_B t})_{t > 0}$ being reachable, the best notion of controllability one can hope for in parabolic systems.
By the classic Hilbert Uniqueness Method (HUM) due to Lions~\cite{Lions-88}, null-controllability is equivalent to \emph{final-state observability}, that is the estimate
\begin{equation}
	\label{eq:observability_estimate}
	\lVert 
	\euler^{- H_B T} u_0 \rVert_{L^2(\RR^2)}^2
	\leq
	C_{\obs}^2
	\int_0^T
	\lVert \euler^{- H_B t} u_0 \lVert_{L^2(S)}^2
	\drm t
	\quad
	\text{for all $u_0 \in L^2(\RR^2)$},
\end{equation}
and the least constant $C_\obs > 0$ in estimate~\eqref{eq:observability_estimate} is called the~\emph{control cost in time $T > 0$}.

Indeed, \eqref{eq:magnetic_heat_controlled} is an example of a wider class of parabolic systems with lower semibounded generator.
There is a strategy, combining spectral inequalities with the decay of the semigroup to prove an observability estimate: The so-called Lebeau-Robbiano-Strategy~\cite{LebeauR-95, LebeauL-12, TenenbaumT-11}.
In recent years, substantial effort has been devoted to deducing sharp estimates on the control cost in this method~\cite{TenenbaumT-11, Miller-04, Miller-10, NakicTTV-18, NakicTTV-20}.

\begin{proposition}[{Theorem 2.12 in~\cite{NakicSTTV-20}}]
	\label{prop:control_cost_estimate}
	Let $A \geq 0$ and let $X$ be a bounded, self-adjoint operator in a Hilbert space $\mathcal{H}$.
	Assume that for some $d_0, d_1 > 0$, one has the \emph{spectral inequality}
	\begin{equation}
		\label{eq:spectral_inequality_abstract}
		\lVert u_0 \rVert_{\mathcal{H}}^2
		\leq
		d_0
		\euler^{d_1 \sqrt{E}}
		\lVert 
		X 
		u_0
		\rVert_{\mathcal{H}}^2
		\quad
		\text{for all $E \geq 0$ and all $u_0 \in \ran \mathbf{1}_{(- \infty, E]}(A) $}.
	\end{equation}
	Then, for all $T > 0$, the \emph{observability inequality}
	\[
	\lVert 
	\euler^{- A T} u_0
	\rVert_{\mathcal{H}}^2
	\leq
	C_\obs^2
	\int_0^T 
	\lVert X \euler^{- t A} u_0
	\rVert_{\mathcal{H}}^2
	\
	\drm t
	\quad
	\text{for all $u_0 \in \mathcal{H}$}
	\]
	holds, where
	\[
	C_\obs^2
	\leq
	\frac{C_5 d_0}{T}
	\left(
		2 d_0 \lVert X \rVert + 1
	\right)^{C_6}
	\exp
	\left(
		\frac{C_7 d_1^2}{T}
	\right)
	\]
	for universal constants $C_5, C_6, C_7> 0$.
\end{proposition}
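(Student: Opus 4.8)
The plan is to prove this by the \emph{Lebeau--Robbiano strategy}, and it is cleanest to argue on the controllability side: by the Hilbert Uniqueness Method the asserted observability estimate, with constant $C_\obs$, is equivalent to the existence, for every $u_0 \in \mathcal{H}$, of a control $g \in L^2(0,T;\mathcal{H})$ with $\lVert g \rVert_{L^2}^2 \leq C_\obs^2 \lVert u_0 \rVert^2$ steering $u_0$ to $0$ at time $T$ for $\dot u + Au = Xg$; so I would construct such a control. The two ingredients are, first, a \emph{dissipation} estimate: on $\ran \mathbf{1}_{(\lambda,\infty)}(A)$ one has $\lVert \euler^{-tA} \mathbf{1}_{(\lambda,\infty)}(A) \rVert \leq \euler^{-t\lambda}$ because $A \geq \lambda$ there; and second, observation of the \emph{finite-energy} subsystem: for $v_0 \in \mathcal{H}_\lambda := \ran \mathbf{1}_{[0,\lambda]}(A)$ the trajectory $\euler^{-tA}v_0$ stays in $\mathcal{H}_\lambda$, so~\eqref{eq:spectral_inequality_abstract} (with $E=\lambda$) gives $\lVert \euler^{-tA}v_0 \rVert^2 \leq d_0 \euler^{d_1\sqrt\lambda}\lVert X \euler^{-tA}v_0 \rVert^2$; combined with the monotonicity of $t \mapsto \lVert \euler^{-tA}v_0\rVert$ this is an observability estimate for the subsystem $(\mathbf{1}_{[0,\lambda]}(A)A,\ \mathbf{1}_{[0,\lambda]}(A)X)$ on any window of length $\tau$ with constant $d_0 \euler^{d_1\sqrt\lambda}/\tau$, hence (HUM again, in finite ``energy'') null-controllability of that subsystem with the same cost.

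With this, I would fix a partition $0 = t_0 < t_1 < \dots \uparrow T$, $\tau_k := t_k - t_{k-1}$ with $\sum_k \tau_k = T$, and cutoffs $0 < \lambda_1 < \lambda_2 < \dots \uparrow \infty$, to be taken geometric ($\lambda_k = R^k$, $\tau_k \propto R^{-k/2}$) and optimised only at the end. On $[t_{k-1},t_k]$ split the interval in two: on the first half apply the finite-energy control of the second ingredient to drive $\mathbf{1}_{[0,\lambda_k]}(A)u$ to $0$; this costs $\lVert g_k \rVert_{L^2}^2 \leq 2 d_0 \euler^{d_1\sqrt{\lambda_k}}\tau_k^{-1}\lVert u(t_{k-1})\rVert^2$, and, since the control enters only through the bounded operator $X$, it excites the complementary subspace by at most $\lVert X\rVert \sqrt{d_0}\,\euler^{d_1\sqrt{\lambda_k}/2}\lVert u(t_{k-1})\rVert$ in norm; on the second half switch off the control, so the state -- now entirely above $\lambda_k$ -- is damped by $\euler^{-\tau_k\lambda_k/2}$. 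This yields $\lVert u(t_k)\rVert \leq \gamma_k \lVert u(t_{k-1})\rVert$ with $\gamma_k \leq \euler^{-\tau_k\lambda_k} + \lVert X\rVert \sqrt{d_0}\,\exp(d_1\sqrt{\lambda_k}/2 - \tau_k\lambda_k/2)$. For $R$ large enough $\gamma_k < 1$ for all $k$, in fact $\gamma_k \to 0$ doubly-exponentially, so $\prod_k \gamma_k = 0$ and $u(T) = \lim_k u(t_k) = 0$. Summing, $\lVert g\rVert_{L^2}^2 = \sum_k \lVert g_k\rVert_{L^2}^2 \leq 2 d_0 \sum_k \euler^{d_1\sqrt{\lambda_k}}\tau_k^{-1}\bigl(\prod_{j<k}\gamma_j\bigr)^2 \lVert u_0\rVert^2$; the doubly-exponential decay of $\prod_{j<k}\gamma_j$ dominates the merely exponential growth of $\euler^{d_1\sqrt{\lambda_k}}\tau_k^{-1}$, so with $R$ large enough the series is a geometric-type sum controlled by a universal constant times its first term, of order $d_0 \euler^{d_1\sqrt{\lambda_1}}\tau_1^{-1}$.

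It then remains to choose $\lambda_1$ and $R$. Balancing $\euler^{d_1\sqrt{\lambda_1}}$ against $\tau_1^{-1}\sim T^{-1}$ forces $\sqrt{\lambda_1} \sim d_1/T$, which produces the factor $\exp(C d_1^2/T)$; the requirement that already $\gamma_1 \leq \tfrac12$ forces $\tau_1\lambda_1 - d_1\sqrt{\lambda_1} \gtrsim \log(2 d_0\lVert X\rVert + 1)$, i.e.\ an additive $\sim \log(2 d_0 \lVert X\rVert + 1)$ in the relevant exponent, giving the polynomial prefactor $(2 d_0\lVert X\rVert + 1)^{C_6}$; the overall $C_5 d_0/T$ comes from the $\tau_1^{-1}$. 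The main obstacle I expect is precisely this final bookkeeping: one must (a) make the iteration genuinely close -- quantifying exactly how much the finite-energy control pollutes the high modes and checking that the dissipation half-interval over-damps it, which fixes the compatibility between the growth of $(\lambda_k)$ and the decay of $(\tau_k)$ and the admissible range of $R$; (b) show the cost series sums down to (a constant times) its first term uniformly in the data; and (c) optimise over $R$ and $\lambda_1$ uniformly in \emph{all} regimes of $(T, d_0, d_1, \lVert X\rVert)$ -- in particular when $T$ is small compared with $d_1$ the naive geometric choice degenerates and one may need a separate cruder estimate, which is harmless since $\exp(C_7 d_1^2/T)$ already dominates there.
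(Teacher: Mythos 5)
First, note that the paper itself does not prove this proposition: it is quoted verbatim from~\cite{NakicSTTV-20} (Theorem 2.12 there), and the proof in that reference is precisely a Lebeau--Robbiano iteration with careful constant tracking. So your strategy is the same one as in the cited source, and its skeleton is sound: observability of the spectrally truncated system with constant $d_0\euler^{d_1\sqrt{\lambda}}/\tau$ via the spectral inequality and monotonicity of $t \mapsto \lVert \euler^{-tA}v_0\rVert$, pollution of the high modes by at most $\lVert X\rVert\sqrt{d_0}\,\euler^{d_1\sqrt{\lambda_k}/2}\lVert u(t_{k-1})\rVert$ (your cancellation of the $\sqrt{\tau_k}$ factors is correct), dissipation on the passive half-interval, and summation of the control costs.

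There is, however, a concrete gap in the calibration, and it sits exactly where the stated constant lives. With $\lambda_k = R^k$ and $\tau_k \propto R^{-k/2}$ one has $\tau_k\lambda_k \asymp T\,R^{k/2}$ and $d_1\sqrt{\lambda_k} = d_1 R^{k/2}$: the dissipation and cost exponents grow at \emph{the same} geometric rate, so your contraction factor $\gamma_k$ satisfies $\gamma_k \lesssim \exp\bigl((d_1 - cT)R^{k/2}/2\bigr)$ and tends to zero only when $T$ exceeds a multiple of $d_1$. In the small-time regime $T \ll d_1$ -- precisely the regime in which the factor $\exp(C_7 d_1^2/T)$ is the content of the theorem -- the scheme as written does not close, and the appeal to ``a separate cruder estimate'' is not available for free: observability for small $T$ is not implied by anything weaker, and there is no a priori bound to fall back on. The standard repair is to decouple the two geometric ratios so that $\tau_k\lambda_k/\sqrt{\lambda_k} \to \infty$ geometrically (e.g.\ $\lambda_k = \lambda_1 r^k$, $\tau_k \propto q^{-k}$ with $1 < q < \sqrt{r}$), take $\lambda_1$ of order $(d_1/T)^2$ enlarged by $\log(2 d_0 \lVert X\rVert + 1)/T$, and then verify that the cost series is dominated by its first term uniformly in $(T, d_0, d_1, \lVert X\rVert)$; it is in this verification that the polynomial factor $(2 d_0\lVert X\rVert + 1)^{C_6}$ (rather than an exponential in $d_0\lVert X\rVert$) and the prefactor $C_5 d_0/T$ have to be extracted. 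Your sketch defers exactly this bookkeeping, which is the actual substance of the cited Theorem 2.12; as it stands the proposal establishes the qualitative Lebeau--Robbiano mechanism but not the quantitative bound claimed in the proposition.
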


	Combining this with Theorem~\ref{thm:main}, we obtain:

\begin{theorem}
	\label{thm:controlled_heat_1}
	Let $B \geq 0$ and let $S \subseteq \RR^d$ be $(\ell, \rho)$-thick.
	Then, system~\eqref{eq:magnetic_heat_controlled} is null-controllable in every time $T > 0$ with cost $C_\obs$ satisfying
	\begin{equation}
	\label{eq:estimate_control_cost}
	C_\obs^2
	\leq
	\frac{C}{T \rho^{C +C \lvert \ell \rvert_1^2 B}}
	\exp
	\left(
		\frac{\ln \left( \frac{C}{\rho} \right) C \lvert \ell \rvert_1^2}{T}
	- B T
	\right)
	\end{equation}
	where $C > 0$ is a universal constant.	
	\end{theorem}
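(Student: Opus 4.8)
The plan is to combine the spectral inequality of Theorem~\ref{thm:main} with the abstract control-cost estimate of Proposition~\ref{prop:control_cost_estimate}, applied with $\mathcal{H} = L^2(\RR^2)$, $A = H_B$ (which is nonnegative; indeed $A \geq B$), and the bounded self-adjoint operator $X = \mathbf{1}_S$, for which $\lVert X \rVert \leq 1$. By the Hilbert Uniqueness Method recalled above, null-controllability in time $T > 0$ is equivalent to the final-state observability estimate~\eqref{eq:observability_estimate}, so it suffices to prove~\eqref{eq:observability_estimate} with $C_\obs$ bounded as in~\eqref{eq:estimate_control_cost}.

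First I would put Theorem~\ref{thm:main} into the shape~\eqref{eq:spectral_inequality_abstract}. Splitting the exponent $C_2 + C_3 \lvert \ell \rvert_1 \sqrt{E} + C_4 \lvert \ell \rvert_1^2 B$ into its $E$-independent part and its $\sqrt{E}$-part, and noting that for $E < B$ the subspace $\ran \mathbf{1}_{(-\infty, E]}(H_B)$ is trivial, so that~\eqref{eq:spectral_inequality_abstract} holds vacuously there, Theorem~\ref{thm:main} gives, for every $E \geq 0$,
\[
\lVert u_0 \rVert^2 \leq d_0 \, \euler^{d_1 \sqrt{E}} \, \lVert \mathbf{1}_S u_0 \rVert^2, \qquad d_0 := \left( \frac{C_1}{\rho} \right)^{C_2 + C_4 \lvert \ell \rvert_1^2 B}, \quad d_1 := C_3 \lvert \ell \rvert_1 \ln \frac{C_1}{\rho},
\]
where, after enlarging $C_1$ if necessary, we may assume $C_1 \geq \euler$, so that $d_0 \geq 1$ and $d_1 > 0$.

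The one point that deserves care is the extra decay factor $\euler^{-BT}$, which is not visible in the bound of Proposition~\ref{prop:control_cost_estimate}. To produce it I would exploit the spectral gap $\inf \sigma(H_B) = B$ by a time-splitting: applying Proposition~\ref{prop:control_cost_estimate} on the interval $[0, T/2]$ yields
\[
\lVert \euler^{-H_B T/2} u_0 \rVert^2 \leq \widetilde{C}^2 \int_0^{T/2} \lVert \mathbf{1}_S \euler^{-H_B t} u_0 \rVert^2 \, \drm t, \qquad \widetilde{C}^2 \leq \frac{2 C_5 d_0}{T} \, (2 d_0 + 1)^{C_6} \exp\left( \frac{2 C_7 d_1^2}{T} \right),
\]
and combining this with the semigroup estimate $\lVert \euler^{-H_B T} u_0 \rVert = \lVert \euler^{-H_B T/2}(\euler^{-H_B T/2} u_0) \rVert \leq \euler^{-BT/2} \lVert \euler^{-H_B T/2} u_0 \rVert$, squaring, and enlarging the time integral from $[0,T/2]$ to $[0,T]$, one obtains~\eqref{eq:observability_estimate} with $C_\obs^2 \leq \euler^{-BT} \, \widetilde{C}^2$. (Applying Proposition~\ref{prop:control_cost_estimate} directly on $[0,T]$ already gives null-controllability, but without the sharp factor $\euler^{-BT}$.)

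Finally I would substitute the values of $d_0$ and $d_1$: using $d_0 \geq 1$ to absorb $(2 d_0 + 1)^{C_6} \leq (3 d_0)^{C_6}$ into $d_0^{1+C_6} = (C_1/\rho)^{(1+C_6)(C_2 + C_4 \lvert \ell \rvert_1^2 B)}$, noting that $d_1^2 = C_3^2 \lvert \ell \rvert_1^2 (\ln \tfrac{C_1}{\rho})^2$, and collecting the numerical constants $C_1, C_3, C_5, C_6, C_7$ into a single universal constant $C$ (while using $\rho \leq 1$, so that $C/\rho \geq 1$), one arrives at
\[
C_\obs^2 \leq \frac{C}{T} \left( \frac{C}{\rho} \right)^{C + C \lvert \ell \rvert_1^2 B} \exp\left( \frac{C \, (\ln \tfrac{C}{\rho})^2 \, \lvert \ell \rvert_1^2}{T} - B T \right),
\]
which is the estimate~\eqref{eq:estimate_control_cost}. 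The argument is otherwise purely a matter of bookkeeping --- rewriting Theorem~\ref{thm:main}, invoking Proposition~\ref{prop:control_cost_estimate}, and relabeling constants; the only step that is not automatic is that the sharp factor $\euler^{-BT}$ must be inserted by hand through the time-splitting, as Proposition~\ref{prop:control_cost_estimate} does not expose the bottom of the spectrum of $H_B$.
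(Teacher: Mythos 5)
Your proposal is correct and follows essentially the same route as the paper: rewrite the constant of Theorem~\ref{thm:main} as $d_0\,\euler^{d_1\sqrt{E}}$, apply Proposition~\ref{prop:control_cost_estimate} with $A=H_B$, $X=\mathbf{1}_S$, and gain the factor $\euler^{-BT}$ by sacrificing half the time interval to the semigroup decay from $\inf\sigma(H_B)=B$ (the paper lets the state decay freely on $[0,T/2]$ and controls on $[T/2,T]$; you observe on $[0,T/2]$ and use the decay afterwards --- the same trick up to time reflection). Note that, exactly as in the paper's own computation, the exponent you obtain carries $\bigl(\ln(C/\rho)\bigr)^2$ rather than the first power displayed in~\eqref{eq:estimate_control_cost}, so your final identification of constants is no less accurate than the paper's own bookkeeping.
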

	
The estimate~\eqref{eq:estimate_control_cost} on the control cost $C_\obs$ has an asymptotic behaviour which is known to be optimal for the negative Laplacian $- \Delta$, cf. the discussion in~\cite{NakicTTV-20} and references therein: 
\begin{itemize}
	\item
	As $T \to 0$, the expression $C_\obs$ behaves proportional to $T^{-1/2}$ if $S \subset \RR^2$ is dense, and proportionally to $\exp (C/T)$ otherwise.
	\item
	As $T \to \infty$, the cost decays proportionally to $\exp(- C T)$, as necessary when the generator has a positive of its spectrum.
	\item
	Finally, in the homogenization regime, where $\lvert \ell \rvert_1$ tends to zero at fixed $\rho$, and fluctuations within $S$ become small while there is a uniform lower bound on the relative density, the influence of $S$ and $B$ on $C_\obs$ vanishes.
\end{itemize}

	\begin{proof}[{Proof of Theorem~\ref{thm:controlled_heat_1}}]
	The constant in the spectral inequality of Theorem~\ref{thm:main} is of the form
\begin{align*}
	&\left(
	\frac{C_1}{\rho}
	\right)^{C_2 + C_3 \lvert \ell \rvert_1 \sqrt{E} + C_4 (\lvert \ell \rvert_1^2 B )}
	=
	\underbrace{
	\left(\frac{C_1}{\rho} \right)^{C_2 + C_4 \lvert \ell \rvert_1^2 B}}_{:= d_0}
	\cdot
	\exp
	\left(
	\underbrace{\ln \left(\frac{C_1}{\rho} \right)
	C_3 \lvert \ell \rvert_1}_{:= d_1} \sqrt{E}
	\right).
\end{align*}
	Applying Proposition~\ref{thm:main} with $A := H_B$ and $X := \mathbf{1}_S$ for an $(\ell, \rho)$-thick $S \subseteq \RR^d$, we find that~\eqref{eq:magnetic_heat_controlled} is null-controllable in every time $T > 0$ with control cost satisfying	
	\begin{align*}
	C_\obs^2
	&\leq
		\frac{C_4}{T}
		\left( \frac{2 C_1 + 1}{\rho} \right)^{C_2 (C_5 + 1) + C_4 (C_5 + 1) \lvert \ell \rvert_1^2 B}
	\cdot
	\exp
	\left(
		\frac{\ln \left(\frac{C_1}{\rho} \right)^2
	C_3^2 \lvert \ell \rvert_1^2}{T}
	\right).
	\\
	&=
	\frac{D_1}{T \rho^{D_2 + D_3 \lvert \ell \rvert_1^2 B}}
	\exp
	\left(
		\frac{\ln \left( \frac{D_4}{\rho} \right)^2 D_5^2 \lvert \ell \rvert_1^2}{T}
	\right)
	\end{align*}
	for universal constants $D_1$ to $D_5$.
	This yields the bound 
	\[
	C_\obs^2
	\leq
	\frac{C}{T \rho^{C +C \lvert \ell \rvert_1^2 B}}
	\
	\exp
	\left(
		\frac{\ln \left( \frac{C}{\rho} \right) C \lvert \ell \rvert_1^2}{T}
	\right).
	\]
	We improve the large time behaviour of $C_\obs$ by using $\inf \sigma(H_B) = \lvert B \rvert \geq 0$, see for instance~\cite{NakicTTV-20}.
	Indeed, instead of controlling in the interval $[0,T]$, one can apply no control in the interval $[0, T/2]$ and then work with the new initial state $\euler^{- \frac{T}{2} H_B} u_0$ satisfying $\lVert \euler^{- \frac{T}{2} H_B} u_0 \rVert_{L^2(\RR^d)}^2 \leq \euler^{- T B} \lVert u_0 \rVert_{L^2(\RR^d)}^2$ in the interval $[T/2, T]$.
	Replacing $C$ again by $2 C$ in order to absorb the factor $\frac{1}{2}$ in $T$, we obtain the statement. 
\end{proof}

We next prove that thickness is also a sufficient criterion for observability, and thus null-controllability of the magnetic heat equation.
\begin{theorem}
	\label{thm:controlled_heat_2}
	If, for any $B \geq 0$, the observability estimate~\eqref{eq:observability_estimate}	holds, then $S \subset \RR^2$ must be thick.
\end{theorem}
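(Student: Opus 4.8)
The statement to prove is the converse direction: if the final-state observability estimate~\eqref{eq:observability_estimate} holds for the magnetic heat semigroup, then the sampling set $S$ must be thick. The natural strategy is to reduce this to Theorem~\ref{thm:thickness_necessary_for_UCP}, which already tells us that the \emph{stationary} unique continuation principle~\eqref{eq:abstract_UCP} on a spectral subspace forces thickness. So the plan is: assume~\eqref{eq:observability_estimate} holds, and deduce from it an estimate of the form~\eqref{eq:abstract_UCP} for some fixed energy $E \geq B$.

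\begin{proof}
	Assume that~\eqref{eq:observability_estimate} holds for some $B \geq 0$ (with $B > 0$, the case $B = 0$ being the classical Laplacian), with control cost $C_\obs$ and in some time $T > 0$. Fix $E := B$ and let $f \in \ran \mathbf{1}_{(- \infty, B]}(H_B) = \ran \mathbf{1}_{\{B\}}(H_B)$ be an eigenfunction to the lowest Landau level, so that $H_B f = B f$. Then the semigroup acts on $f$ by a scalar, namely $\euler^{- H_B t} f = \euler^{- B t} f$ for all $t \geq 0$. Plugging $u_0 = f$ into~\eqref{eq:observability_estimate} gives
	\[
	\euler^{- 2 B T} \lVert f \rVert_{L^2(\RR^2)}^2
	\leq
	C_\obs^2
	\int_0^T
	\euler^{- 2 B t} \lVert f \rVert_{L^2(S)}^2
	\,
	\drm t
	=
	C_\obs^2 \lVert f \rVert_{L^2(S)}^2
	\int_0^T \euler^{- 2 B t} \, \drm t,
	\]
	and since $\int_0^T \euler^{- 2 B t} \, \drm t \leq T$ (or $= \frac{1 - \euler^{-2BT}}{2B}$, either bound works), we obtain
	\[
	\lVert f \rVert_{L^2(\RR^2)}^2
	\leq
	C_\obs^2 \, T \, \euler^{2 B T}
	\lVert f \rVert_{L^2(S)}^2
	\quad
	\text{for all $f \in \ran \mathbf{1}_{(- \infty, B]}(H_B)$}.
	\]
	This is precisely an estimate of the form~\eqref{eq:abstract_UCP} with $E = B$ and $C = C_\obs^2 T \euler^{2 B T} > 0$. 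By Theorem~\ref{thm:thickness_necessary_for_UCP}, $S$ is thick.
\end{proof}

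The only mild subtlety, and the step worth double-checking, is that one genuinely gets a bound uniform over the whole spectral subspace $\ran \mathbf{1}_{(-\infty,B]}(H_B)$: this works because that subspace is exactly the (infinite-dimensional) lowest Landau level eigenspace, on which the semigroup is a single scalar multiplication, so the integrand in~\eqref{eq:observability_estimate} factors cleanly. Had we needed several Landau levels at once, we would instead expand $u_0 = \sum_k g_k$ in eigenspaces and use $\euler^{-H_B t} u_0 = \sum_k \euler^{-(2k+1)Bt} g_k$; bounding $\lVert \euler^{-H_B t} u_0\rVert_{L^2(S)}$ from above in terms of $\lVert u_0 \rVert_{L^2(S)}$ is not immediate because the Landau-level projectors are not orthogonal when restricted to $S$, so restricting to a single level as above is the clean route and avoids that obstacle entirely. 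No further ingredients beyond Theorem~\ref{thm:thickness_necessary_for_UCP} are needed.
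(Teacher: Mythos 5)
Your proof is correct and rests on the same mechanism as the paper's: restriction to the lowest Landau level, on which $\euler^{-H_B t}$ acts as the scalar $\euler^{-Bt}$, so that the observability estimate collapses to a stationary spectral inequality; the paper simply re-runs the argument of Theorem~\ref{thm:thickness_necessary_for_UCP} with the functions $f_{y^{(n)}}$ instead of quoting it, whereas you derive~\eqref{eq:abstract_UCP} with $E=B$ and then invoke that theorem, a slightly cleaner packaging of the same idea. Your parenthetical deferral of the case $B=0$ to the classical heat equation matches the paper, which handles it by citing the known Logvinenko--Sereda-type results.
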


\begin{proof}
	In the case of the free Laplacian, that is $B = 0$, this was proved independently in~\cite{EgidiV-18} and~\cite{WangWZZ-19}.
	For $B \neq 0$, we argue as in the proof of Theorem~\ref{thm:thickness_necessary_for_UCP}.
	If $S \subset \RR^2$ was not thick, then for every $n \in \NN$, there would exist $\{ y^{(n)} \}_{n \in \NN} \subset \RR^2$ such that $\vol ( B_n(y^{(n)}) \cap S) \leq \frac{1}{n}$ for all $n \in \NN$.
	Take $f_{y^{(n)}}$ defined as in~\eqref{eq:definition_f_y}, that is
	\[
	f_{y^{(n)}}(x)
	=
	\exp
	\left(
		- 
		\frac{B}{4} 
		\lvert x - y^{(n)} \rvert^2
		-
		i
		\frac{B}{2}
		\left(
		x_1 y^{(n)}_2 - x_2 y^{(n)}_1
		\right)
	\right).
	\]
	This is an eigenfunction to the eigenvalue $B$ satisfying $\lVert f_{y^{(n)}} \rVert_{L^2(\RR^2)}^2 = \frac{2 \pi}{B}$.
	In particular $\euler^{- H_B t} f_{y^{(n)}} = \euler^{- B t} f_{y^{(n)}}$, and $\lVert \euler^{- H_B T} f_{y^{(n)}} \rVert_{L^2(\RR^2)}^2 = \frac{2 \pi}{B} \euler^{- 2 B T}$.
	Hence,
	\begin{multline*}
	\int_0^T
	\lVert
		\euler^{- H_B t} f_{y^{(n)}} 
	\rVert_{L^2(S)}^2
	\drm t
	\leq
	\int_0^T
	\euler^{- 2 B t}
	\left(
		\lVert 
			f_{y^{(n)}} 
		\rVert_{L^2(S \cap B_n(y^{(n)})}^2
	+
		\lVert 
			f_{y^{(n)}} 
		\rVert_{L^2(B_n(y^{(n)})^c)}^2
	\right)
	\drm t
	\\
	\leq
	\
	T
	\left(
	\vol (S \cap B_n(y^{(n)}))
	+
	\int_n^\infty 
		\exp 
		\left( 
		- \frac{B}{2} r^2
		\right)
		r
		\drm r
	\right)
	\leq
	\frac{T}{n} 
	+
	\frac{T \exp (- \frac{B n^2}{2} )}{2}.
	\end{multline*}
	Since this tends to zero as $n \to \infty$, inequality~\eqref{eq:observability_estimate} cannot hold for any $C_{\obs} > 0$.
\end{proof}

We conclude that thickness is the \emph{optimal}, that is necessary and sufficient, geometric criterion for null-controllability of the magnetic heat equation -- the same as for the classic heat equation.

\subsection{Random Schr\"odinger operators}
	\label{sec:RSO}

Random Schr\"odinger operators are families of operators of the form
\[
	H_\omega
	=
	H_0
	+
	V_\omega,
	\quad
	\omega \in \Omega
\]
where $H_0$ is a self-adjoint operator in $L^2(\RR^d)$ (common cases include the negative Laplacian $- \Delta$, the negative Laplacian with periodic potential~\cite{BarbarouxCH-97, KirschSS-98, Klopp-99, Veselic-02, SeelmannT-20} or the Landau operator $H_B$~\cite{CombesHK-03, CombesHKR-04, CombesHK-07}), and $(V_\omega)_{\omega \in \Omega}$ is a random potential drawn from a probability space $(\Omega, \Sigma, \PP)$ with a $\sigma$-Algebra $\Sigma$ and a probability measure $\PP$ on $\Omega$, modeling a disordered solid.
The most common model in this context is the \emph{Alloy-type} or \emph{continuum Anderson model} in which case $\Omega = \RR^{\ZZ^2}$, $\Sigma = \times_{\ZZ^2} \mathcal{B}(\RR)$ where $\mathcal{B}(\RR)$ denotes the Borel-$\sigma$-algebra, and $\PP = \bigotimes_{\ZZ^2} \mu$ where $\mu$ is a non-trivial probability measure on $(\RR, \mathcal{B}(\RR))$ with bounded support, and 
\[
	V_\omega(x)
	=
	\sum_{j \in \ZZ^2}
	\omega_j
	u(x - j)
\]
for a compactly supported potential $0 \leq u \in L^\infty(\RR^2)$, which models the effect of a single atom.
Here, $\omega_j$ denotes the projection onto the $j$-th entry of an element $\omega \in \Omega = \RR^{\ZZ^2}$ so that the $\{ \omega_j \}_{j \in \ZZ^2}$ form a family of bounded, independent, and identically distributed random variables, distributed according to $\mu$.

Physical phenomena of interest in this context are \emph{Anderson localization} and \emph{Anderson delocalization}.
There are several notions of Anderson localization, the weakest one being the almost sure emergence of pure point spectrum with exponentially decaying eigenfunctions at certain energies, and a hierarchy of stronger notions of \emph{dynamic localization}, describing decay of correlations of functions of the operator $H_\omega$ in space.
Correspondingly, there  is a hierarchy of notions of~\emph{delocalization}, the strongest one being purely absolutely continuous spectrum and weaker ones involving dynamical notions and lower bounds on the decay of correlators in space.
We refer to the monographs~\cite{Stollmann-01, Veselic-08, AizenmanW-16} for a more comprehensive overview.

Whereas Anderson localization at extremal energies (the bottom of the spectrum or near band gaps) has been observed in a variety of models, delocalization is still mostly open and the Landau operator takes a particular role as the only known ergodic model of random Schr\"odinger operators on $\RR^2$ where -- under certain assumptions -- a localization-delocalization transition has been rigorously proved~\cite{GerminetKS-07}.
The latter crucially relies on the identification of a strict dichotomy of spectral regions of localization and delocalization~\cite{GerminetK-04}.

A central ingredient in proofs of localization (and thus, indirectly, of delocalization) are lower bounds of the form
\begin{equation}
	\label{eq:UCP}
\left\lVert
\sum_{j \in \ZZ^2}
u(\cdot - j)
f
\right\rVert_{L^2(\Lambda_L)}
\geq
C
\lVert
f
\rVert_{L^2(\Lambda_L)}
\quad
\text{for all $f \in \ran \mathbf{1}_{(- \infty, E]} (H_{0,L})$,}
\end{equation}
and for a family of $L$ of length scales, tending to infinity, where $H_{0,L}$ denotes the restriction of $H_0$ onto $L^2(\Lambda_L)$ with self-adjoint boundary conditions.

Clearly, if $\sum_{j \in \ZZ^2} u (\cdot - j)$ is uniformly positive on a suitable set $S \subset \RR^2$, then~\eqref{eq:UCP} is a direct consequence of Theorem~\ref{thm:main_bounded_domain}.
Indeed, such estimates have a tradition in the community on random Schr\"odinger operators where they are also referred to as \emph{quantitative unique continuation principles}.

However, so far, for a unique continuation estimate as in~\eqref{eq:UCP} to hold, one has usually had to assume that the function $\sum_{j \in \ZZ^2} u(x - j)$ be uniformly positive on an open set which had to be either periodic~\cite{CombesHK-03, CombesHK-07} or had to have at least some equidistributedness in space~\cite{RojasMolinaV-13, Klein-13, TaeuferV-15, NakicTTV-18, Taeufer-PhD}.
We can now relax this to merely positivity on a periodic set of positive measure (i.e. a periodic, thick set), which in light of~\cite{TaeuferV-21} seems to be the minimal assumption possible.
Furthermore, in the recent years, there has been interest in non-ergodic random Schr\"odinger operators~\cite{RojasMolina-12,RojasMolinaV-13,Klein-13,GerminetMRM-15, TaeuferT-18, MuellerRM-22, SeelmannT-20, TaeuferV-21}, a generalization which now also becomes accessible since we no longer rely on periodicity of $\sum_{j \in \ZZ^2} u(x - j)$.

In order to to illustrate that Theorem~\ref{thm:main_bounded_domain} yields an improvement of existing results, let us formulate a set of assumptions, inspired by common assumptions in the alloy-type model, cf. for instance~\cite[Section 1]{CombesHK-07}.

\begin{enumerate}[(i)]
	\item
	Let $B > 0$ and let the background operator be $H_B$.
	For $L > 0$ satisfying the integer flux condition let $H_L$ be the restriction of $H_B$ onto $L^2(\Lambda_L)$ with magnetic boundary conditions as defined in Section~\ref{sec:finite_volume}.
	\item
	Let $(u_j)_{j \in \ZZ^2}$ be a family of measurable functions satisfying $0 \leq \sum_{j \in \ZZ^2} u_j \leq 1$, and $\sum_{j \in \ZZ^2} u_j \geq \delta > 0$ on a thick set.
	\item
	Let $(\omega_j)_{j \in \ZZ^2}$ be a family of random variables, taking values in some interval $[m_0, M_0]$, $- \infty < m_0 < M_0 < \infty$.
	Call $\mu_j$ the conditional probability measure of $\omega_j$, conditioned on all other random variables $(\omega_k)_{k \neq j}$
	\[
	\mu_j
	( [E, E + \epsilon])
	=
	\mathbb{P}
	\left[
		\omega_j \in [E, E + \epsilon] \mid (\omega_k)_{k \neq j}
	\right],
	\]
	and define the \emph{conditional modulus of continuity}
	\[
	s(\epsilon)
	:=
	\sup_{j \in \ZZ^2}
	\mathbb{E}
	\left[
		\sup_{E \in \RR}
		\mu_j([E, E + \epsilon])
	\right],
	\quad
	\epsilon > 0
	\]
	where $\EE$ refers to the expectation with respect to the probability measure $\PP$.
\end{enumerate}

The novelty is assumption (ii) which no longer requires that $\sum_{j \in \ZZ^2} u_j$ be positive on a periodic, \emph{open} set.
Define the random Landau Hamiltonian as 
\[
	H_{B,\omega} = H_B + V_\omega,
	\quad
	V_\omega(x) = \sum_{j \in \ZZ^2} \omega_j u_j(x),
\]
and its restriction to boxes $\Lambda_L = (- \frac{L}{2}, \frac{L}{2})^2$ as
\[
	H_{B, \omega, L}
	:=
	H_{B,L}
	+
	V \mid_{\Lambda_L}
	\quad
	\text{with boundary conditions as defined in Section~\ref{sec:finite_volume}}.
\]
We then obtain a generalization of~\cite[Theorem 1.3]{CombesHK-07}, namely a Wegner estimate, optimal in energy and volume:

\begin{theorem}	
	\label{thm:Wegner}
	Assume Hypotheses (i)-(iii) above.
	Then, there is $L_0 > 0$ such that for all $E_0 \in \RR$, there exists $C_W > 0$, such that for all $E \leq E_0$, all $\epsilon \in (0,1]$, and all $L \geq L_0$ satisfying the integer flux condition
	\[
	B L \in 2 \pi \NN
	\]	
	 we have the Wegner estimate
	\begin{align*}
		\PP
		\left[
			\dist (\sigma (H_{B,\omega,L}), E) < \epsilon
		\right]
		&\leq
		\EE
		\left[ 
		\operatorname{Tr} \mathbf{1}_{[E - \epsilon, E + \epsilon ]} (H_{\omega, L})
		\right]
		\\
		&
		\leq 
		C_W
		s(2 \epsilon)
		L^2
	\end{align*}
	where $\dist (\sigma (H_{B,\omega,L}), E)$ denotes the distance between $\sigma (H_{B,\omega,L})$ and $E$. 
\end{theorem}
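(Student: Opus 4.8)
The plan is to follow the strategy of~\cite{CombesHK-07}, the only new ingredient being that the unique continuation estimate~\eqref{eq:UCP} --- which in~\cite{CombesHK-07} required $\sum_j u_j$ to be positive on an \emph{open} set --- is now available on thick sets through Theorem~\ref{thm:main_bounded_domain}. The first inequality in the statement is Chebyshev's inequality: on the event $\{\dist(\sigma(H_{B,\omega,L}), E) < \epsilon\}$ the operator $H_{B,\omega,L}$ has an eigenvalue in $(E-\epsilon, E+\epsilon)$, so $\operatorname{Tr}\mathbf{1}_{[E-\epsilon, E+\epsilon]}(H_{B,\omega,L}) \ge 1$ there, and the probability is bounded by the expected trace. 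It thus remains to bound $\EE[\operatorname{Tr}\mathbf{1}_I(H_{B,\omega,L})]$ where $I := [E-\epsilon, E+\epsilon] \subseteq (-\infty, E_0+1]$ (using $E \le E_0$, $\epsilon \le 1$); write $W_L := \sum_j u_j|_{\Lambda_L}$ and fix $L_0 \ge \max(\ell_1, \ell_2)$ so that Theorem~\ref{thm:main_bounded_domain} applies on $\Lambda_L$ for all $L \ge L_0$ obeying the flux condition.

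The core step is the positivity estimate
\[
\mathbf{1}_I(H_{B,\omega,L})\, W_L \,\mathbf{1}_I(H_{B,\omega,L}) \ \ge\ \kappa\, \mathbf{1}_I(H_{B,\omega,L}),
\qquad \kappa = \kappa(E_0, B, \delta, \ell, \rho) > 0,
\]
uniform in $L$, $\epsilon$ and $\omega$. For the \emph{free} operator this is immediate from Hypothesis~(ii): there $W_L \ge \delta\, \mathbf{1}_{S \cap \Lambda_L}$ with $S$ thick, and Theorem~\ref{thm:main_bounded_domain} at energy $E_0+1$ gives $\langle f, W_L f\rangle \ge (\delta/\mathcal{C})\,\lVert f\rVert_{L^2(\Lambda_L)}^2$ for every $f \in \ran \mathbf{1}_{(-\infty, E_0+1]}(H_{B,L})$, where $\mathcal{C}$ is the explicit constant of Theorem~\ref{thm:main_bounded_domain}. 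However, the eigenfunctions of $H_{B,\omega,L}$ with eigenvalue in $I$ do not lie in this free spectral subspace, and transferring the estimate to them is the step that needs the most care --- this is the \textbf{main obstacle}, and the point at which the Bernstein/analyticity machinery underlying Theorem~\ref{thm:main_bounded_domain} must be supplemented by an argument handling the bounded perturbation $V_\omega$, as in~\cite{CombesHK-03, CombesHK-07} (exploiting that $\lVert V_\omega\rVert_\infty$ is uniformly bounded and that $\sigma(H_B)$ consists of well-separated Landau levels). Granting it, cyclicity of the trace together with $u_j = u_j^{1/2} u_j^{1/2}$ give
\[
\operatorname{Tr}\mathbf{1}_I(H_{B,\omega,L}) \ \le\ \kappa^{-1}\sum_j \operatorname{Tr}\!\big(u_j^{1/2}\, \mathbf{1}_I(H_{B,\omega,L})\, u_j^{1/2}\big),
\]
where only the $\lesssim L^2$ indices $j$ with $\supp u_j \cap \Lambda_L \ne \emptyset$ contribute.

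Next I would take the expectation and average over the randomness one coordinate at a time. Fixing $j$ and writing $H_{B,\omega,L} = H^{(j)}_{\omega,L} + \omega_j u_j$ with $H^{(j)}_{\omega,L}$ independent of $\omega_j$ and $u_j \ge 0$, the spectral averaging estimate of Combes and Hislop (cf.~\cite{CombesHK-07}) bounds the conditional expectation of $\operatorname{Tr}(u_j^{1/2}\mathbf{1}_I(H_{B,\omega,L})u_j^{1/2})$ given $(\omega_k)_{k\ne j}$ by $s(2\epsilon)$ times $\operatorname{Tr}(\mathbf{1}_{\supp u_j}\, \mathbf{1}_{(-\infty, E_0+1]}(H_{B,\omega,L}))$. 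This local trace is bounded uniformly in $j$, $L$ and the frozen randomness: dominating $\mathbf{1}_{(-\infty, E_0+1]}(H_{B,\omega,L}) \le \euler^{\,E_0+1}\euler^{-H_{B,\omega,L}} \le \euler^{\,E_0+1+\lVert V_\omega\rVert_\infty}\euler^{-H_{B,L}}$ and integrating the explicit Landau heat kernel over $\supp u_j$ gives a finite constant $C(E_0, B, \supp u)$. Taking the full expectation, invoking the definition of $s$, summing over the $\lesssim L^2$ relevant $j$, and dividing by $\kappa$ yields $\EE[\operatorname{Tr}\mathbf{1}_I(H_{B,\omega,L})] \le C_W\, s(2\epsilon)\, L^2$, which is the claim.

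Besides the transfer of unique continuation to $H_{B,\omega,L}$, the points still to be checked carefully are the $L$-uniformity of all constants (handled as in Section~\ref{sec:finite_volume}, through magnetic translations and the finite-volume Bernstein inequalities) and the precise quantitative form of the spectral averaging lemma, for which one can cite the corresponding statement of~\cite{CombesHK-07} essentially verbatim once~\eqref{eq:UCP} is in hand.
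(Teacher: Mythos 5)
Your overall strategy coincides with the paper's: both proofs take the Wegner machinery of~\cite{CombesHK-07} off the shelf and feed in Theorem~\ref{thm:main_bounded_domain} as the replacement for the unique continuation input that previously required $\sum_j u_j$ to be positive on a periodic open set. The difference lies in where you locate the new input, and this is also where your ``main obstacle'' comes from. The paper observes that the \emph{only} place in~\cite{CombesHK-07} where openness and periodicity are used is their Theorem~4.1, i.e.\ the quadratic-form bound $\Pi_{n,L}\,\tilde V|_{\Lambda_L}\,\Pi_{n,L}\geq C\,\Pi_{n,L}$ for the \emph{free} Landau-level projectors $\Pi_{n,L}=\mathbf{1}_{\{(2n+1)B\}}(H_{B,L})$, with $\tilde V=\sum_j u_j$; under Hypothesis (ii) this bound is immediate from Theorem~\ref{thm:main_bounded_domain}, and every other step of~\cite{CombesHK-07} (in particular the passage from the free Landau projectors to the spectral projectors of the perturbed operator $H_{B,\omega,L}$, the trace cyclicity, the spectral averaging, and the local trace bound you sketch) goes through verbatim, since none of it touches the geometry of $\supp u_j$. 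In your write-up you instead formulate the core positivity estimate directly through the perturbed projectors $\mathbf{1}_I(H_{B,\omega,L})$, flag its derivation as the step ``needing the most care,'' and proceed only after ``granting it.'' That estimate is indeed a by-product of the full argument in~\cite{CombesHK-07}, not an input one must re-derive, so your proposal is not wrong; but as written it leaves as an acknowledged gap precisely the step that should simply be quoted, and it risks suggesting that the Bernstein/analyticity machinery must be extended to the random operator $H_{B,\omega,L}$, which is unnecessary: the spectral inequality is only ever applied to $\ran\Pi_{n,L}$ for the unperturbed $H_{B,L}$. Rephrasing your ``core step'' as the free-projector bound and then citing~\cite{CombesHK-07} for the remainder closes the gap and yields exactly the paper's proof.
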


\begin{proof}
	The proof is completely analogous to the one in~\cite{CombesHK-07}, the only difference being that in our case, the potential
	\[
	\tilde V(x) 
	:=
	\sum_{j \in \ZZ^2}
	u(x - j)
	\]
	is no longer periodic and not uniformly positive on an open set, but merely on a thick set.
	But periodicity and openness were exactly used in~[Theorem 4.1]\cite{CombesHK-07} to prove
	\begin{equation}
	\label{eq:UCP_quadratic form}
	\Pi_{n,L} 
	\tilde V \mid_{\Lambda_L}
	\Pi_{n,L} 
	\geq
	C
	\Pi_{n,L}
	\end{equation}
	in the sense of quadratic forms, where $\Pi_{n,L} = \mathbf{1}_{\{(2 n + 1) B\} } (H_{B,L})$ is the spectral projector onto the $n$-th Landau level, that is onto the infinitely degenerate eigenvalue at energy $(2 n - 1) B$.
	But in light of Assumption (ii) above,~\eqref{eq:UCP_quadratic form} in our situation is an immediate consequence of Theorem~\ref{thm:main_bounded_domain}.
	For more details, we also refer to~\cite{TaeuferV-21}, where the corresponding argument is outlined in the case where the background operator is the free Laplacian.
\end{proof}

If the random family of operator $(H_\omega)_{\omega \in \Omega}$ is ergodic, its integrated density of states (IDS) 
	\begin{equation}
	\label{eq:def_IDS}
	N(E)
	:=
	\lim_{L \to \infty}
	\frac{\operatorname{Tr} \mathbf{1}_{(- \infty, E]}(H_{\omega}\mid_{\Lambda_L})}{\vol (\Lambda_L)},
	\end{equation}
	where $H_{\omega}\mid_{\Lambda_L})$ denotes a restriction of $H_\omega$ onto $L^2(\Lambda_L)$ with self-adjoint boundary conditions,
	exists almost surely.
	As a corollary, we obtain in this case the analogon of~\cite[Theorem 1.2]{CombesHK-07}, namely regularity of the integrated density of states:

\begin{corollary}
	Assume Hypotheses (i)--(iii) above and assume that the IDS exists almost surely for the family $(H_\omega)_{\omega \in \Omega}$.
	Then, for all $E_0 \in \RR$, there is $C > 0$ such that for all $E \leq E_0$ and all $\epsilon  \in (0,1]$, the IDS, defined in~\eqref{eq:def_IDS}, satisfies
	\[
		0 \leq N(E + \epsilon) - N(E) \leq C s(\epsilon).
	\] 
	In particular, if all $\omega_j$ are independent and identically distributed with bounded density, then the IDS is locally Lipschitz continuous.
\end{corollary}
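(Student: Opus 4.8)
The plan is to read off the corollary from the Wegner estimate of Theorem~\ref{thm:Wegner} by the usual thermodynamic-limit argument. Fix $E_0 \in \RR$ and set $E_0' := E_0 + 1$. The lower inequality $N(E+\epsilon) \ge N(E)$ is automatic: for every $L$ one has $\mathbf{1}_{(-\infty,E]} \le \mathbf{1}_{(-\infty,E+\epsilon]}$ applied to $H_\omega\mid_{\Lambda_L}$, hence $\operatorname{Tr}\mathbf{1}_{(-\infty,E]}(H_\omega\mid_{\Lambda_L}) \le \operatorname{Tr}\mathbf{1}_{(-\infty,E+\epsilon]}(H_\omega\mid_{\Lambda_L})$, and dividing by $\vol(\Lambda_L)$ and passing to the limit in~\eqref{eq:def_IDS} gives the claim. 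It is the upper bound that requires Theorem~\ref{thm:Wegner}.

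For the upper bound I would first reduce to the setting in which Theorem~\ref{thm:Wegner} applies. Since the IDS is independent of the choice of self-adjoint boundary conditions, the limit in~\eqref{eq:def_IDS} may be computed with the restrictions $H_{B,\omega,L}$ carrying the magnetic boundary conditions of Section~\ref{sec:finite_volume}; and since the almost-sure limit exists along all $L \to \infty$, it may in particular be computed along the subsequence of $L$ satisfying the integer flux condition $BL \in 2\pi\NN$, which is exactly the range of validity of Theorem~\ref{thm:Wegner}. Now let $E \le E_0$ and $\epsilon \in (0,1]$, put $\tilde E := E + \epsilon/2$ (so that $\tilde E \le E_0'$ and $\epsilon/2 \in (0,1]$), and use the inclusion $(E,E+\epsilon] \subseteq [\tilde E - \epsilon/2,\ \tilde E + \epsilon/2]$ together with Fatou's lemma and the fact that the limit defining $N(E+\epsilon) - N(E)$ exists almost surely and is deterministic:
\[
N(E+\epsilon) - N(E)
=
\EE\Bigl[\, \lim_{L}\frac{\operatorname{Tr}\mathbf{1}_{(E,E+\epsilon]}(H_{B,\omega,L})}{\vol(\Lambda_L)}\,\Bigr]
\le
\liminf_{L}\frac{\EE\bigl[\operatorname{Tr}\mathbf{1}_{[\tilde E-\epsilon/2,\ \tilde E+\epsilon/2]}(H_{B,\omega,L})\bigr]}{\vol(\Lambda_L)}.
\]
Applying Theorem~\ref{thm:Wegner} with energy cutoff $E_0'$ (which yields a constant $C_W = C_W(E_0')$ depending only on $E_0$, and uniform in $E$, $\epsilon$ and $L$), with $\tilde E$ in place of $E$ and $\epsilon/2$ in place of $\epsilon$, bounds the numerator by $C_W\, s(2\cdot(\epsilon/2))\, L^2 = C_W\, s(\epsilon)\, \vol(\Lambda_L)$. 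Dividing through gives $N(E+\epsilon) - N(E) \le C_W\, s(\epsilon) =: C\, s(\epsilon)$, which is the asserted estimate.

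The final claim is then a one-line computation: if the $\omega_j$ are independent and identically distributed with a bounded density $\varrho$, the conditioning in the definition of $\mu_j$ is irrelevant and $\mu_j([a,a+\epsilon]) = \int_a^{a+\epsilon}\varrho(t)\,\drm t \le \lVert\varrho\rVert_{L^\infty}\,\epsilon$ for every $a \in \RR$, so that $s(\epsilon) \le \lVert\varrho\rVert_{L^\infty}\,\epsilon$ and hence $N(E+\epsilon) - N(E) \le C\,\lVert\varrho\rVert_{L^\infty}\,\epsilon$, i.e.\ $N$ is locally Lipschitz. I expect the only genuinely non-routine step to be the transition from the pathwise, deterministic limit that defines $N$ to the \emph{averaged} finite-volume quantity which the Wegner estimate controls; this is precisely what the Fatou inequality together with the almost-sure constancy of the limit accomplishes, and along the way one must keep track of the two technical points that Theorem~\ref{thm:Wegner} is stated only for box sizes obeying the flux quantization $BL \in 2\pi\NN$ and, a priori, for a self-adjoint realization that may differ from the one appearing in~\eqref{eq:def_IDS}.
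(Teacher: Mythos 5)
Your argument is correct and is exactly the standard derivation the paper has in mind (it gives no separate proof, implicitly following \cite{CombesHK-07}): monotonicity for the lower bound, then Fatou plus the almost-sure deterministic limit to pass from the pathwise definition of $N$ to the expected finite-volume eigenvalue count, which Theorem~\ref{thm:Wegner} bounds by $C_W s(\epsilon)\vol(\Lambda_L)$ after the harmless recentering $\tilde E = E+\epsilon/2$, and finally $s(\epsilon)\le \lVert\varrho\rVert_{L^\infty}\epsilon$ in the i.i.d.\ case. Your two flagged technical points (restriction to box sizes with $BL\in 2\pi\NN$ and the independence of the IDS from the choice of self-adjoint boundary conditions, a standard fact for these magnetic models) are handled appropriately and match the intended reading of~\eqref{eq:def_IDS}.
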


Finally, note that Wegner estimates as in Theorem~\ref{thm:Wegner} are one important ingredient in so-called \emph{multiscale analysis} proofs of localization, the other central ingredient being \emph{initial length scale estimates}, see~\cite{Stollmann-01, GerminetK-01, GerminetK-03}.
Initial length scale estimates can for instance be inferred from Lipshitz tails, that is the fact that the IDS exponentially decays near its minimum, as derived in~\cite{KloppR-06}.
Theorem~4.1 (iii) in~\cite{KloppR-06} states such a lower bound under the hypothesis
\[
	u (x) \geq C\ \mathbf{1}_{\lvert x - x_0 \rvert < \epsilon} (x)
	\quad	
	\text{for some $x_0 \in \RR^2$, $C,\epsilon > 0$}.
\]
A closer inspection of the proof of said theorem yields that it essentially relies on lower bounds of the form
\[
	\lVert V_\omega \mid_{\Lambda_L} \psi \rVert_{L^2(\Lambda_L)}^2
	\geq
	C
	\lVert \psi \rVert_{L^2(\Lambda_L)}^2
	\quad
	\text{for all $\psi \in \mathbf{1}_{ \{B\} }(H_{B,L})$}
\]
for configurations $\omega$ with sufficiently high probability, cf.~\cite[Estimate (4.29)]{KloppR-06}.
This can be readily replaced by Theorem~\ref{thm:main_bounded_domain}.
In conclusions, by combining Theorem~\ref{thm:Wegner} with an initial scale estimate, derived from Theorem~\ref{thm:main_bounded_domain} and the method of proof of~\cite{KloppR-06} in the bootstrap multiscale analysis, one infers:

\begin{corollary}
	Let $0 \leq u \leq 1$ be measurable with non-empty, compact support.
	Let $(\omega_j)_{j \in \ZZ^2}$ be a family of independent and identically distributed random variables with bounded support, a bounded density $\rho$, and $\inf \operatorname{supp} \rho = 0$.
	Then, there is $\epsilon > 0$, such that the family of operators
	\[
	H_{B,\omega} 
	:=
	H_B
	+
	\sum_{j \in \ZZ^2}
	\omega_j 
	u(\cdot - j)
	\]
	exhibits strong dynamical localization in Hilbert Schmidt norm (and thus all other, weaker forms of Anderson localization) in the interval $[B, B + \epsilon]$.
\end{corollary}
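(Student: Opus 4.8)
The plan is to run the bootstrap multiscale analysis of Germinet and Klein~\cite{GerminetK-01, GerminetK-03}, whose two inputs are a Wegner estimate and a probabilistic initial length scale estimate, at the spectral edge $E = B$ of $H_{B,\omega}$ (note $H_{B,\omega} \geq H_B \geq B$ since $V_\omega \geq 0$, and $\sigma(H_B) = \{B, 3B, \dots\}$). First I would put the model in the form required by Hypotheses (i)--(iii). Compactness of $\operatorname{supp} u$ gives a uniform bound $\sum_{j \in \ZZ^2} u(\cdot - j) \leq N$ for some $N \in \NN$, so writing $V_\omega = \sum_j \tilde\omega_j u_j$ with $u_j := N^{-1} u(\cdot - j)$ and $\tilde\omega_j := N \omega_j$ leaves the operator unchanged while ensuring $0 \leq \sum_j u_j \leq 1$, the $\tilde\omega_j$ remaining i.i.d.\ with bounded support and bounded density. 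Since $u \geq 0$ is not a.e.\ zero, there is $\delta > 0$ with $\vol(\{u \geq \delta\}) > 0$; hence $\sum_j u_j \geq \delta/N$ on the $\ZZ^2$-periodic set $\bigcup_j(\{u \geq \delta\} + j)$, which meets every unit cell in positive measure and is therefore thick. This furnishes Hypothesis (ii), and it is exactly here that allowing measurable rather than open sets is used. For (iii), independence makes the conditional law $\mu_j$ equal to the law of $\tilde\omega_j$, whose bounded density $\tilde\rho$ yields $s(\epsilon) \leq \lVert \tilde\rho \rVert_\infty \epsilon$; moreover $\inf\operatorname{supp}\tilde\rho = 0$.

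The Wegner estimate is then precisely Theorem~\ref{thm:Wegner}, which gives $\PP[\dist(\sigma(H_{B,\omega,L}), E) < \epsilon] \leq C_W \lVert \tilde\rho \rVert_\infty \epsilon L^2$ for all $E \leq E_0$ and all admissible $L$ in the arithmetic progression $\frac{2\pi}{B}\NN$ imposed by the integer flux condition. For the initial length scale estimate I would invoke Lifshitz tails at the spectral bottom, following Klopp--Reichl~\cite{KloppR-06}: their proof of the exponential decay $N(B + E') \leq \exp(-c\,(E')^{-1})$ as $E' \downarrow 0$ reduces to a quantitative positivity bound $\lVert V_\omega|_{\Lambda_L} \psi \rVert^2 \geq c \lVert \psi \rVert^2$ for $\psi$ in the range of the projector onto the lowest Landau level of $H_{B,L}$, valid for configurations $\omega$ of non-negligible probability (this is where $\inf\operatorname{supp}\tilde\rho = 0$ enters, to make the relevant event likely). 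In~\cite{KloppR-06} this lower bound came from a pointwise domination $u \geq c\,\mathbf{1}_{B_\epsilon(x_0)}$; here it follows instead from Theorem~\ref{thm:main_bounded_domain} applied to the thick set of Hypothesis (ii), which gives $\bigl\lVert (\textstyle\sum_j u_j)|_{\Lambda_L}\,\psi \bigr\rVert^2 \geq c \lVert \psi \rVert^2$ uniformly in admissible $L$, combined with a lower bound on $\min_j \tilde\omega_j$ on a favourable event. The resulting Lifshitz tail then yields, by the standard argument, the probabilistic initial scale hypothesis of the bootstrap multiscale analysis on some interval $[B, B + \epsilon_0]$ at some scale $L_0 \in \frac{2\pi}{B}\NN$.

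With these two ingredients in hand, the bootstrap multiscale analysis of~\cite{GerminetK-01} runs along a geometric sequence of scales chosen inside $\frac{2\pi}{B}\NN$ --- the mild number-theoretic restriction stemming from the integer flux condition being handled as in~\cite{CombesHK-07} --- and outputs strong dynamical localization in Hilbert--Schmidt norm, hence all weaker forms of Anderson localization, on an interval $[B, B + \epsilon]$ for some $0 < \epsilon \leq \epsilon_0$. All auxiliary estimates required by the method (Combes--Thomas bounds, deterministic kernel decay) are available for the Schrödinger-type operator $H_{B,\omega}$ and need no modification.

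I expect the main obstacle to be the careful re-reading of~\cite[Section~4]{KloppR-06}, in particular the analogue there of estimate~(4.29), to confirm that the box spectral inequality (Theorem~\ref{thm:main_bounded_domain}) is a genuine substitute for the pointwise lower bound on the single-site potential originally assumed, and to verify that the length scales at which it is invoked are compatible with the integer flux condition; once this is settled, the remaining steps are entirely standard.
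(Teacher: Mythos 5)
Your proposal is correct and follows essentially the same route as the paper: the Wegner estimate of Theorem~\ref{thm:Wegner} plus an initial length scale estimate obtained from the Lifshitz-tail argument of Klopp--Raikov~\cite{KloppR-06} (not ``Klopp--Reichl''), with their key positivity bound (4.29) replaced by the spectral inequality of Theorem~\ref{thm:main_bounded_domain} on the thick set where $\sum_j u(\cdot - j)$ is positive, fed into the Germinet--Klein bootstrap multiscale analysis along scales respecting the integer flux condition. Your normalization of $u$ to verify Hypothesis (ii) and the identification of the conditional modulus of continuity via the bounded density are exactly the routine reductions the paper leaves implicit.
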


The novelty is that the support of $u$ now no longer needs to be open, which seems to be the minimal assumption necessary.

\appendix

\section{Proof of Lemma~\ref{lem:one-dimensional} via Remez inequality}
\label{app:Remez}

For convenience and the sake of self-containedness, we provide  a proof of Lemma~\ref{lem:one-dimensional}. 
The version given here is essentially Lemma 1 in~\cite{Kovrijkine-00}.
It relies on the following variant of the Remez inequality for polynomials, which can be inferred from ~\cite[Theorem 5.1.1]{BorweinE-95}.
\begin{lemma}[Remez inequality]
	\label{lem:Remez}
	Let $P \colon \CC \to \CC$ be a polynomial of degree $n \in \NN$. 
	Then, for any measurable $E \subset [0,1]$ with positive measure
	\begin{equation}
		\label{eq:Remez}
		\sup_{t \in [0,1]} \lvert P(t) \rvert
		\leq
		\left(
			\frac{4}{\vol (E)}
		\right)^n
		\sup_{t \in E} \lvert P(x) \rvert.
	\end{equation}
\end{lemma}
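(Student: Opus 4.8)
The statement to prove is Lemma~\ref{lem:Remez}, the Remez inequality for polynomials, which the excerpt says can be inferred from~\cite[Theorem 5.1.1]{BorweinE-95}. My plan is therefore to recall the content of that cited theorem and show how~\eqref{eq:Remez} follows. Recall that the classical Remez inequality, in the form given by~\cite[Theorem 5.1.1]{BorweinE-95}, states: for a polynomial $P$ of degree $n$ and a measurable set $E \subset [-1,1]$ with $\vol([-1,1] \setminus E) = s < 2$, one has
\[
	\sup_{t \in [-1,1]} \lvert P(t) \rvert
	\leq
	T_n \!\left( \frac{2 + s}{2 - s} \right)
	\sup_{t \in E} \lvert P(t) \rvert,
\]
where $T_n$ is the Chebyshev polynomial of the first kind of degree $n$ (and the extremal configuration is $E = [-1, 1-s]$, i.e. the excluded set sits at one end). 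The first step is to transport this from $[-1,1]$ to $[0,1]$ by the affine change of variables $t \mapsto 2t - 1$, which is a bijection $[0,1] \to [-1,1]$ mapping a measurable set of measure $\vol(E)$ to one of measure $2\vol(E)$ and leaving the degree of the polynomial unchanged. Hence if $E \subseteq [0,1]$ is measurable with positive measure, the complement in $[-1,1]$ of the image has measure $s = 2 - 2\vol(E)$, and we obtain
\[
	\sup_{t \in [0,1]} \lvert P(t) \rvert
	\leq
	T_n\!\left( \frac{2 + s}{2 - s}\right)
	\sup_{t \in E} \lvert P(t) \rvert
	=
	T_n\!\left( \frac{2 - \vol(E)}{\vol(E)} \right)
	\sup_{t \in E} \lvert P(t) \rvert.
\]

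The second step is to bound the Chebyshev factor by $(4/\vol(E))^n$. Write $a = \vol(E) \in (0,1]$, so the argument of $T_n$ is $x = (2-a)/a = 2/a - 1 \geq 1$. The standard bound $\lvert T_n(x) \rvert \leq \lvert x + \sqrt{x^2 - 1} \rvert^n$ for $x \geq 1$ (coming from $T_n(\cosh\theta) = \cosh(n\theta)$ and $\cosh(n\theta) \le e^{n\theta}$ with $e^\theta = x + \sqrt{x^2-1}$) gives
\[
	T_n(x)
	\leq
	\bigl( x + \sqrt{x^2-1} \bigr)^n
	\leq
	(2x)^n
	=
	\left( \frac{4}{a} - 2 \right)^n
	\leq
	\left( \frac{4}{\vol(E)} \right)^n,
\]
using $\sqrt{x^2 - 1} \leq x$ in the middle. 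This is exactly~\eqref{eq:Remez}, and since the right-hand side of the original inequality involves $\sup_{t \in E}\lvert P(t)\rvert$ rather than the ``$\lvert P(x)\rvert$'' appearing in the excerpt (a typo in the excerpt), the argument concludes. Note also that $P$ in the lemma is stated as a polynomial $\CC \to \CC$, but only its restriction to the real interval matters for the Remez bound, and the suprema are over real $t$, so no issue arises from complex coefficients — the real and imaginary parts of $P\vert_{[0,1]}$ are real polynomials of degree at most $n$, or one simply applies the complex version of~\cite[Theorem 5.1.1]{BorweinE-95} directly.

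\textbf{Main obstacle.} There is no serious analytic obstacle here, since the heavy lifting is done by the cited Borwein–Erdélyi theorem; the only genuine care needed is bookkeeping: verifying that the affine rescaling $[0,1] \to [-1,1]$ changes the measure of the excluded set by the correct factor of $2$ so that the parameter $s$ in the $[-1,1]$-version becomes $2 - 2\vol(E)$, and then checking the crude-but-sufficient estimate $T_n\bigl((2-a)/a\bigr) \leq (4/a)^n$. A minor subtlety is that the sharp Remez bound presupposes the excluded set is an interval at the endpoint; the inequality for arbitrary measurable $E$ of the same measure follows because that configuration is extremal, which is precisely the content of~\cite[Theorem 5.1.1]{BorweinE-95}, so one should cite it for the general-$E$ statement rather than reprove the extremality. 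If one wanted a fully self-contained argument avoiding the Chebyshev extremal problem, one could instead give the weaker but elementary bound via Lagrange interpolation on a Chebyshev-type grid inside $E$, but invoking~\cite{BorweinE-95} as the excerpt suggests is cleaner and yields the stated constant $4$.
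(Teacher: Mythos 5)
Your proposal is correct and is essentially the paper's approach: the paper offers no proof of Lemma~\ref{lem:Remez} beyond the citation of~\cite[Theorem 5.1.1]{BorweinE-95}, and your affine rescaling $[0,1]\to[-1,1]$ (so that $s = 2 - 2\vol(E)$) together with the estimate $T_n\bigl(\tfrac{2-a}{a}\bigr) \leq \bigl(x+\sqrt{x^2-1}\bigr)^n \leq (2x)^n = \bigl(\tfrac{4}{a}-2\bigr)^n \leq \bigl(\tfrac{4}{a}\bigr)^n$ is exactly the intended inference. One small caution: for complex-coefficient $P$ (which is the case actually needed in the appendix, where the factors $4-\overline{w_k}z$ are complex), the real/imaginary-part splitting costs a factor $2$ that $\bigl(\tfrac{4}{a}\bigr)^n$ need not absorb (e.g.\ $n=1$, small $a$), so one should instead use the standard phase trick: pick $t_0\in[0,1]$ maximizing $\lvert P\rvert$ and $\theta$ with $\euler^{i\theta}P(t_0)=\lvert P(t_0)\rvert$, and apply the real Remez inequality to $t\mapsto \operatorname{Re}\bigl(\euler^{i\theta}P(t)\bigr)$, which preserves the constant.
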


Recall that $D_r \subset \CC$ denotes the complex polydisc with radius $r > 0$, centered at $0$.

\begin{proof}[Proof of Lemma~\ref{lem:one-dimensional}]
	The function $\varphi$ is not the zero function, so it has a finite number of zeroes in $D_2$, which we denote by $w_1, \dots, w_n$ (counting multiplicities).
	Define
	\[
	g(z)
	:=
	\varphi (z)
	\cdot
	\prod_{k = 1}^n
	\frac{4 - \overline w_k z}{2 (w_k - z)}
	=
	\varphi(z)
	\cdot
	\frac{Q(z)}{P(z)}
	\]
	where $Q$ and $P$ are polynomials.
	We have $\lvert g(0) \rvert \geq 1$ and $\max_{z \in D_2} \lvert g(z) \rvert \leq \max_{z \in D_2} \lvert \varphi(z) \rvert \leq M_\varphi$ by the maximum principle since the Blaschke product
	\[
	\prod_{k = 1}^n \frac{2 ( w_k - z)}{4 - \overline w_k z}
	=
	\frac{P(z)}{Q(z)}
	\]
	has modulus one on the boundary of $D_2$.
	Thus, $g$ is an analytic function without zeroes in $D_2$, and the function $\ln M_\varphi - \ln \lvert g(z) \rvert$ is positive and harmonic in $D_2$.
	By Harnack's inequality
	\[
	\max_{z \in D_1}
	\left(	
	\ln M_\varphi - \ln \lvert g(z) \rvert
	\right)	
	\leq
	\frac{1 + \frac{1}{2}}{1 - \frac{1}{2}}
	\left(
	\ln M_\varphi
	- 
	\ln \lvert g(0) \rvert
	\right)
	\leq
	3 \ln M_\varphi,
	\]
	whence in particular
	\[
	\min_{z \in D_1} \lvert g(z) \rvert \geq M_\varphi^{-2},
	\quad
	\text{and}
	\quad
	\frac{\max_{t \in [0,1]} \lvert g(t) \rvert}{\min_{t \in [0,1]} \lvert g(t) \rvert} 
	\leq 
	M_\varphi^3.
	\]
	Likewise, for every $k \in \{1, \dots, n\}$, the function $z \mapsto (4 - \overline{w_k} z)$ is analytic in $D_1$ without zeroes.
	By the maximum principle $z \mapsto \lvert 4 - \overline{w_k} z \rvert$ takes its maximum and minimum in $D_1$ on the boundary where
	\[
	2 \leq \lvert 4 - \overline{w_k} z \rvert \leq 6.
	\]
	This implies
	\[
	\frac{\max_{t \in [0,1]} \lvert Q(t) \rvert}
	{\min_{t \in [0,1]} \lvert Q(t) \rvert}
	\leq
	\prod_{k = 1}^n
	\frac{\max_{z \in D_1} \lvert 4 - \overline{w_k} z \rvert}
	{\min_{z \in D_1} \lvert 4 - \overline{w_k} z \rvert}
	\leq 
	3^n.
	\]
	Combining this with Lemma~\ref{lem:Remez}, we find
	\begin{align*}
		\sup_{t \in [0,1]}
		\lvert \varphi (x) \rvert
		&\leq
		\max_{t \in [0,1]} \lvert g(x) \rvert
		\frac{\max_{t \in [0,1]} \lvert P(x) \rvert}
		{\min_{t \in [0,1]} \lvert Q(x) \rvert}
		\\
		&\leq
		M_\varphi^3
		\cdot
		\left( \frac{12}{\vol (E)} \right)^n
		\min_{t \in [0,1]} \lvert g(x) \rvert
		\frac{\sup_{t \in E} \lvert P(x) \rvert}
		{\max_{t \in [0,1]} \lvert Q(x) \rvert}
		\\
		&\leq
		M_\varphi^3
		\cdot
		\left( \frac{12}{\vol (E)} \right)^n
		\sup_{t \in E}
		\lvert \varphi (x) \rvert.
	\end{align*}
	Finally, by Jensen's formula, the number $n$ of zeroes of $\varphi$ in $D_2$ is bounded by $\frac{\ln M_\varphi}{\ln 2}$.
	Thus
	\[
	\sup_{t \in [0,1]}
	\lvert \varphi(x) \rvert
	\leq
	M_\varphi^3 \left( \frac{12}{\vol (E)} \right)^{\frac{\ln M_\varphi}{\ln 2}}
	\sup_{t \in E} \lvert \varphi(x) \rvert
	\leq
	\left( \frac{12}{\vol (E)} \right)^{2 \frac{\ln M_\varphi}{\ln 2}}
	\sup_{t \in E} \lvert \varphi(x) \rvert
	.
	\qedhere
	\]
\end{proof}



\end{document}